\numberwithin{equation}{section}
\newtheorem{theoremcounter}{theoremcounter}[section]
\theoremstyle{plain}
\newtheorem{lemma}[theoremcounter]{Lemma}
\newtheorem{proposition}[theoremcounter]{Proposition}
\newtheorem{theorem}[theoremcounter]{Theorem}
\newtheorem{claim}{Claim}
\newtheorem{introtheorem}{Theorem}
\newtheorem{introcorollary}[introtheorem]{Corollary}
\theoremstyle{definition}
\theoremstyle{remark}
\newtheorem{example}[theoremcounter]{Example}
\newtheorem{notation}[theoremcounter]{Notation}
\newtheorem{remark}[theoremcounter]{Remark}
\newcommandx{\unsure}[2][1=]{\todo[linecolor=red,backgroundcolor=red!25,bordercolor=red,#1]{#2}}
\newcommandx{\change}[2][1=]{\todo[linecolor=blue,backgroundcolor=blue!25,bordercolor=blue,#1]{#2}}
\newcommandx{\info}[2][1=]{\todo[linecolor=OliveGreen,backgroundcolor=OliveGreen!25,bordercolor=OliveGreen,#1]{#2}}
\newcommandx{\improvement}[2][1=]{\todo[linecolor=Plum,backgroundcolor=Plum!25,bordercolor=Plum,#1]{#2}}
\newcommand{\cA}{\ensuremath{\mathcal{A}}}
\newcommand{\cN}{\ensuremath{\mathcal{N}}}
\newcommand{\cU}{\ensuremath{\mathcal{U}}}
\newcommand{\cZ}{\ensuremath{\mathcal{Z}}}
\newcommand{\bS}{\ensuremath{\mathbb{S}}}
\newcommand{\bT}{\ensuremath{\mathbb{T}}}
\newcommand{\rL}{\ensuremath{\mathrm{L}}}
\newcommand{\rR}{\ensuremath{\mathrm{R}}}
\newcommand{\rmd}{\ensuremath{\mathrm{d}}}
\newcommand{\veps}{\ensuremath{\varepsilon}}
\newcommand{\vphi}{\ensuremath{\varphi}}
\newcommand{\ol}{\overline}
\newcommand{\eqstop}{\ensuremath{\, \text{.}}}
\newcommand{\eqcomma}{\ensuremath{\, \text{,}}}
\newcommand{\NN}{\ensuremath{\mathbb{N}}}
\newcommand{\ZZ}{\ensuremath{\mathbb{Z}}}
\newcommand{\RR}{\ensuremath{\mathbb{R}}}
\newcommand{\CC}{\ensuremath{\mathbb{C}}}
\newcommand{\id}{\ensuremath{\mathrm{id}}}
\newcommand{\lra}{\ensuremath{\longrightarrow}}
\newcommand{\Aut}{\ensuremath{\mathrm{Aut}}}
\newcommand{\ot}{\ensuremath{\otimes}}
\newcommand{\Cstar}{\ensuremath{\mathrm{C}^*}}
\newcommand{\bo}{\ensuremath{\mathcal{B}}}
\newcommand{\contc}{\ensuremath{\mathrm{C}_\mathrm{c}}}
\newcommand{\Ltwo}{\ensuremath{{\offinterlineskip \mathrm{L} \hskip -0.3ex ^2}}}
\newcommand{\ltwo}{\ensuremath{\ell^2}}
\newcommand{\Rep}{\ensuremath{\mathrm{Rep}}}
\newcommand{\lp}{\ensuremath{\ell^r}}
\newcommand{\sign}{\ensuremath{\mathrm{sign}}}
\newcommand{\qm}{\mathbf{q}}
\newcommand{\am}{\mathbf{a}}
\newcommand{\bm}{\mathbf{b}}
\newcommand{\zm}{\mathbf{z}}
\newcommand{\epsilonm}{\mathbf{\epsilon}}
\newcommand{\vepsm}{{\boldsymbol \veps}}
\newcommand{\tvepsm}{{\tilde {\boldsymbol \veps}}}
\newcommand{\St}{\ensuremath{\mathrm{St}}}
\newcommand{\lang}{\ensuremath{\langle}}
\newcommand{\rang}{\ensuremath{\rangle}}
\newcommand{\pos}{\ensuremath{\mathrm{pos}}}
\newcommand{\authors}{Sven Raum and Adam Skalski}
\renewcommand{\title}{Factorial multiparameter Hecke \mbox{von Neumann algebras} and representations of groups acting on right-angled buildings}
\newcommand{\shorttitle}{Factorial Hecke von Neumann algebras}
\begin{document}

\thispagestyle{empty}

\begin{center}
  \begin{minipage}[c]{\linewidth}
    \textbf{\LARGE \title} \\[0.3em]      
    by \authors
  \end{minipage}
\end{center}

\renewcommand{\thefootnote}{}
\footnotetext{
  \textit{MSC classification:}
  46L65;
  46L10,
  20C08,
  20E42,
  22D10
}
\footnotetext{
  \textit{Keywords:}
  Hecke von Neumann algebra, ${\rm II}_1$ factor, right-angled Coxeter group, right-angled building, strongly transitive action, spherical representation, character space
}

\begin{center}
  \begin{minipage}{\linewidth}
    \vspace{-0.6em}
    \textbf{Abstract}.
    We obtain a complete characterisation of factorial multiparameter Hecke von Neumann algebras associated with right-angled Coxeter groups.  Considering their $\ell^p$-convolution algebra analogues, we exhibit an interesting parameter dependence, contrasting phenomena observed earlier for group Banach algebras.  Translated to Iwahori-Hecke von Neumann algebras, these results allow us to draw conclusions on spherical representation theory of groups acting on right-angled buildings, which are in strong contrast to behaviour of spherical representations in the affine case.  We also investigate certain graph product representations of right-angled Coxeter groups and note that our von Neumann algebraic structure results show that these are finite factor representations.  Further classifying a suitable family of them up to unitary equivalence allows us to reveal high-dimensional Euclidean subspaces of the space of extremal characters of right-angled Coxeter groups.
  \end{minipage}
\end{center}

\section{Introduction}
\label{sec:introduction}

Generic Hecke algebras are deformations of the group ring of a Coxeter group.  Being intimately related to deep representation theoretic problems, they attracted great interest for spherical and affine Coxeter groups \cite{kazhdanlusztig79}.  In recent decades, other Coxeter groups started to attract interest, driven by the theory of buildings and by Kac-Moody groups acting on them \cite{remy12-survey}.  An operator algebraic perspective on Hecke algebras was created by Dymara who introduced Hecke von Neumann algebras $\cN_\qm(W)$ associated with a Coxeter system $(W, S)$ and specific deformation parameters $\qm \in \RR_{>0}^S$ in order to study cohomology of buildings \cite{dymara06, davisdymarajanusykiewiczokun07}.

Among non spherical and non affine Coxeter groups, the right-angled ones are particularly interesting. It was shown by Haglund and Paulin that buildings of arbitrary thickness associated with right-angled Coxeter groups exist \cite{haglundpaulin03}.  Not only certain Kac-Moody groups act on right-angled buildings, but also the buildings' automorphism groups \cite{caprace14} and universal groups with prescribed local action \cite{demedtssilvastruyve18,demedtssilva19} enrich the class of examples.  From an operator algebraic point of view, right-angled Coxeter groups are interesting thanks to their rich combinatorial structure as graph products \cite{caspersfima17}, which makes a variety of operator algebraic tools available.

The first structural result on Hecke von Neumann algebras was obtained by Garncarek, who characterised factorial single parameter Hecke von Neumann algebras of right-angled Coxeter groups \cite{garncarek16}.  Whether the multiparameter generalisation of his result holds remained unclear at the time and was advertised as an open problem in \cite[Question 2]{garncarek16}.  After further advances in the understanding of Hecke operator algebras \cite{capsersskalskiWasilewski19,caspers20},   Caspers, Klisse and Larsen recently obtained in \cite{caspersklisselarsen21} a factoriality result for some multiparameters, as a consequence of their simplicity and unique trace results for Hecke \Cstar-algebras.  However, they state in \cite[Remark 5.8 (a)]{caspersklisselarsen21} that Garncarek's proof ``does not trivially extend to the multiparameter case''.  This statement is indeed true, but here we extend the analytic part of Garncarek's work to the multiparameter case and complement it with a new combinatorial approach in order to obtain a complete solution to \cite[Question 2]{garncarek16}.  At the same time we initiate the study of the corresponding $\lp$-operator algebras.  Our result is most conveniently formulated for parameters  $\qm \in (0,1]^S$, covering the general case thanks to standard considerations, based on Kazhdan-Lusztig's parameter reduction.  Note that throughout the paper we adopt a convention that if $x \in \mathbb{R}$ and $r \in [1, \infty)$ then $x^{\frac{r}{2}}:= \textup{sgn}(x) |x|^{\frac{r}{2}}$.
\begin{introtheorem}
  \label{thm:factoriality}
  Let $(W, S)$ be an irreducible, right-angled Coxeter system with at least three generators, let $\qm \in (0,1]^S$ and let $r \in (1,\infty)$.  Given $\vepsm = (\veps_s)_{s \in S}  \in \{-1,1\}^S$, write $q_{s,\vepsm} = \veps_s q_s^{\veps_s}$ for $s \in S$ and $q_{w, \vepsm} = q_{s_1, \vepsm} \dotsm q_{s_n, \vepsm}$ for a reduced expression $w = s_1\dotsm s_n \in W$. Set $\tilde{r}= \min\{r,r'\}$, where $r'$ is the conjugate exponent of $r$  and consider the set of parameters with convergent weighted growth series $\tilde{C} = \{ \vepsm \in \{-1,1\}^S \mid \sum_{w \in W} q_{w, \vepsm}^{\frac{\tilde{r}}{2}} \text{ converges absolutely} \}$. Then
  \begin{gather*}
    \cN_q^r(W) = M \oplus \bigoplus_{\vepsm \in \tilde{C}} \CC p_\vepsm
  \end{gather*}
  where $M$ is a factor (i.e.\ has a trivial centre) and $p_\vepsm$ is a natural projection onto the one-dimensional space of Hecke eigenvectors with weight $(q^{\frac{r}{2}}_{s,\vepsm})_{s \in S}$.  In particular, $\cN_q^r(W)$ is a factor if and only if $\sum_{w \in W} q_{w,\mathbf{1}}^{\frac{\tilde{r}}{2}} = \infty$.
\end{introtheorem}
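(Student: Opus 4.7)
The plan is to split the proof into two complementary parts: first constructing the projections $p_\vepsm$ associated with summable sign patterns, then proving that the remaining direct summand is a factor.

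For each $\vepsm \in \{-1,1\}^S$ I would define an explicit candidate joint eigenvector $\xi_\vepsm \colon W \to \RR$ by a formula of the form $\xi_\vepsm(w) := q_{w,\vepsm}^{r/2}$. Well-definedness follows from Matsumoto's theorem combined with the right-angled hypothesis: any two reduced expressions for $w$ differ only by transpositions of commuting generators, hence yield the same weight. A direct computation using the Hecke relations then shows that $\xi_\vepsm$ is a simultaneous formal eigenvector for every generator $T_s$ with eigenvalue $q_{s,\vepsm}^{r/2}$. The condition $\vepsm \in \tilde C$ is by construction equivalent to $\xi_\vepsm \in \ell^{\tilde r}(W)$, which via H\"older duality produces a bounded character of $\cN_q^r(W)$ and, via cyclicity, a central idempotent $p_\vepsm$ onto the one-dimensional eigenspace in question. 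Different choices of $\vepsm$ give distinct eigenvalue tuples, so the $p_\vepsm$ are mutually orthogonal and their sum $P := \sum_{\vepsm \in \tilde C} p_\vepsm$ is a well-defined central idempotent; its complementary summand $M$ is the candidate factor.

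The heart of the argument is then to show that the centre of $M$ is trivial. Given a central element $z \in M$, I would expand it in the natural basis $\{T_w\}_{w \in W}$ and exploit $[z,T_s] = 0$ for every $s \in S$ to obtain a recursive system on the coefficients, organised by the sets of generators that can start or end each $w$. In the irreducible right-angled setting with at least three generators, the formal solution space of this system should be spanned precisely by the $\xi_\vepsm$ for $\vepsm \in \{-1,1\}^S$; imposing $\ell^r$-summability then prunes this to $\bigoplus_{\vepsm \in \tilde C} \CC \xi_\vepsm$, which has already been split off, forcing $z$ to be scalar. For the final equivalence, observe that $\qm \in (0,1]^S$ gives $q_{w,\mathbf{1}}^{\tilde r/2} \le |q_{w,\vepsm}|^{\tilde r/2}$ for every $w$ and every $\vepsm$, so $\tilde C$ is empty precisely when the $\mathbf{1}$-weighted growth series diverges; in that case $P = 0$ and $\cN_q^r(W) = M$ is itself the factor.

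The main obstacle is the step just sketched. In Garncarek's single-parameter treatment, the symmetry of the parameter across $S$ trivialises large parts of the recursion. In the genuinely multiparameter case the recursions become asymmetric and couple non-trivially across non-commuting pairs of generators; solving them requires a new combinatorial input exploiting the graph product decomposition of right-angled Coxeter groups, together with the connectedness of the Coxeter diagram, to rule out \emph{partial} eigenvectors that satisfy the eigenvalue equation only on a proper subset of $S$. The hypothesis of at least three generators enters precisely here, since the irreducible two-generator case is the infinite dihedral group and requires separate treatment. I expect this combinatorial-analytic step to constitute the technical bulk of the paper.
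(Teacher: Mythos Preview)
Your high-level strategy matches the paper's exactly: split off the rank-one central idempotents coming from the Hecke eigenvectors $\eta_\vepsm$, then show that any remaining Hecke central vector in $\ell^r(W)$ is a scalar multiple of $\delta_e$. The construction of the eigenvectors, the passage from central elements to central vectors via $x\mapsto x\delta_e$, and the final pruning by summability are all as you describe.

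Where you are deliberately vague is precisely where the paper's contribution lies, and your guess at the mechanism (``graph product decomposition'', ``ruling out partial eigenvectors'') is not quite what happens. The paper does two concrete things. First, an analytic lemma extending Garncarek's determines the coefficients of any central vector on each non-degenerate double coset $\langle s,t\rangle w\langle s,t\rangle$: solving a linear recurrence, one finds that these coefficients are forced to lie in a one- or two-dimensional space, depending on whether $q_s=q_t$ or not; in the unequal case the extra basis solution corresponds to flipping the sign $\veps$ at the \emph{larger} parameter. This parameter-ordering dependence is the essential new phenomenon absent from the single-parameter case, and it is what prevents a direct global solution of the recursion you describe. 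Second, the combinatorial part is an \emph{induction on the rank} of $(W,S)$: one removes a generator $s$ from the Coxeter graph, chosen according to the ordering of the $q$'s so that the double-coset lemma can propagate information from $W|_{S\setminus s}$ to words containing $s$; the base case is rank three, handled by explicit case analysis on the ordering of $q_s,q_t,q_u$. Irreducibility is used as connectedness of the Coxeter graph, so that after removing the chosen vertex one lands either in a connected graph of smaller rank or in a treated base case.

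So the recursive system you anticipate is indeed there, but it is not solved globally; it is localised to infinite-dihedral double cosets and then stitched together by a graph-theoretic induction whose steps depend on the parameter ordering. That interplay between the analytic double-coset lemma and the order-sensitive vertex removal is the technical core you would still need to supply.
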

Including the $\lp$-convolution algebra analogues $\cN_\qm^r(W)$ of Hecke von Neumann algebras allows us to exhibit the interesting parameter dependence, which is in contrast to simplicity results obtained so far. Indeed, it is easy to see that the classical condition to have only infinite (non-trivial) conjugacy classes characterises factoriality of group convolution algebras on arbitrary $\lp$-spaces. Moreover, the articles \cite{pooyahedjasian15,phillips19} show that also on a \Cstar-algebraic level, simplicity of a classical operator algebra associated with a group implies simplicity of its $\lp$-analogues. Our result shows that for Hecke operator algebras a similar implication holds no longer true.  The remark is due that some simplicity results for the norm-closed $\lp$-analogues of Hecke operator algebras can be obtained from existing literature. Combining the averaging operators introduced in \cite{caspersklisselarsen21} with the interpolating methods employed in \cite{phillips19}, it follows directly that for every Hecke \Cstar-algebra $\Cstar_q(W)$ for which \cite{caspersklisselarsen21} proves simplicity, all norm-closed $\lp$-analogues are simple for $r$ from a neighbourhood of $2$.  The crucial difference to \cite{phillips19}, preventing further conclusions, is the lack of a good norm-estimate for the averaging operators on $\ell^1(W)$.

It must also be mentioned that the infinite dimensional von Neumann factors arising in Theorem~\ref{thm:factoriality} are non-injective by \cite[Theorem 6.2]{caspersklisselarsen21}.  Furthermore, an observation made already in \cite[Section 6]{garncarek16} (compare also with \cite[Section 5.4]{caspersklisselarsen21}) is that Dykema's free probability results \cite{dykema93-hyperfinite} apply in the special case of free products $W = \ZZ/2\ZZ^{*n}$ and yield there  the von Neumann algebraic statement of our Theorem~\ref{thm:factoriality}.  In the present treatment Dykema's results can be used to give an alternative proof in the von Neumann algebraic case for the base case of an induction.  Compare with Remark \ref{rem:dykema}.

It is fair to say that recent literature treated Hecke operator algebras as analytic-combinatorial objects.  Concerning our main methods, the present work is no different.  This is not the case when applications are concerned.  The origin of Hecke operator algebras is geometric and lies in the Singer conjecture on cohomology of buildings.  But already Garncarek has to state that ``it turns out, although the centers of $\cN_q(W)$ can be nontrivial, they contribute nothing new in the subject of decomposing the weighted cohomology of $W$'' \cite[p.1203]{garncarek16}.  Subsequent work on Hecke operator algebras makes no different claim, and also our work does not create new insight into weighted cohomology.  However, connections between Hecke operator algebras and the representation theory of groups acting on buildings have so far not appeared in the literature.  A classical result in representation theory of p-adic reductive groups going back to Iwahori and Matsumoto \cite{iwahorimatsumoto65} says that generic Hecke algebras are isomorphic with double coset Hecke algebras associated with Iwahori subgroups.  By definition, an Iwahori subgroup of a p-adic reductive group is the stabiliser of a chamber in the associated Bruhat-Tits building.  This identification of Hecke algebras continues to hold for groups acting strongly transitively on arbitrary buildings.  We are hence able to deduce information about the spherical representation theory of certain groups acting on right-angled buildings from (the Hilbert space case of) Theorem~\ref{thm:factoriality}.  The most precise way to do so, is to describe a decomposition of the quasi-regular representation associated with an Iwahori subgroup.  It is unitarily equivalent to a restriction of the regular representation, since Iwahori subgroups are compact.  The next corollary applies to several classes of groups mentioned before, such as certain completed Kac-Moody groups \cite{remyronan06,remy12-survey}, the full group of type preserving automorphism of right-angled buildings \cite{caprace14} and certain universal groups with prescribed local action \cite{demedtssilvastruyve18,demedtssilva19}.
\begin{introcorollary}
  \label{cor:spherical-representations}
  Let $G \leq \Aut(X)$ be a closed subgroup acting strongly transitively and type preserving on a right-angled irreducible thick building $X$ of type $(W, S)$ and rank at least three.  Denote by $(d_s)_{s \in S}$ the thickness of $X$, let $B \leq G$ be the stabiliser of a chamber in $X$ and let $K \leq G$ be the stabiliser of a vertex in $X$.  Given $\vepsm = (\veps_s)_{s \in S}  \in \{-1,1\}^S$, write $q_{s,\vepsm} =  \veps_s d_s^{-\veps_s}$ for $s \in S$ and $q_{w, \vepsm} = q_{s_1, \vepsm} \dotsm q_{s_n, \vepsm}$ for a reduced expression $w = s_1\dotsm s_n \in W$.  Denote $C = \{ \vepsm \in \{-1,1\}^S \mid \sum_{w \in W} q_{w, \vepsm}^{\frac{r}{2}} \text{ converges absolutely} \}$.
  \begin{itemize}
  \item The quasi-regular representation $\lambda_{G,B}$ on $\ltwo(G/B)$ decomposes as a direct sum of a type ${\rm II}_\infty$ factor representation and pairwise unitarily inequivalent, infinite dimensional, irreducible unitary representations $\St_{\vepsm}$, $\vepsm \in C$.  In particular, if $\sum_{w \in W} q_{w, \textbf{1}} = \infty$ then $\lambda_{G,B}$ is type ${\rm II}_\infty$ factor representation.
  \item If $(d_s)_{s \in S}$ is constant, then $\lambda_{G,K}$ is a type ${\rm II}_\infty$ factor representation.
  \end{itemize}
\end{introcorollary}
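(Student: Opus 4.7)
The plan is to identify the commutant of $\lambda_{G, B}$ on $\ltwo(G/B)$ with the Hecke von Neumann algebra $\cN_\qm(W)$ at $q_s = 1/d_s$, apply Theorem~\ref{thm:factoriality} with $r = 2$ (so $\tilde r = r$ and $\tilde C = C$), and translate the resulting central decomposition of the commutant into a representation-theoretic one.  The commutant identification is an instance of the generalised Iwahori--Matsumoto theorem for strongly transitive type-preserving actions on buildings: the Bruhat decomposition $G = \bigsqcup_{w \in W} B w B$ realises the convolution $*$-algebra of bi-$B$-invariant compactly supported functions on $G$ as the generic Hecke algebra of $(W, S)$, acting on $\ltwo(G/B)$; its weak closure coincides with the commutant of $\lambda_{G, B}$ and, after the standard Kazhdan--Lusztig parameter reduction, with $\cN_\qm(W)$ at the stated parameters.

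For the first bullet, Theorem~\ref{thm:factoriality} yields
\begin{gather*}
\cN_\qm(W) = M \oplus \bigoplus_{\vepsm \in C} \CC p_\vepsm,
\end{gather*}
and the minimal central projections $z_M$ and $p_\vepsm$ induce a $G$-invariant orthogonal decomposition $\ltwo(G/B) = z_M \ltwo(G/B) \oplus \bigoplus_{\vepsm \in C} p_\vepsm \ltwo(G/B)$.  On $z_M \ltwo(G/B)$, the commutant $M$ is a $\mathrm{II}_1$ factor acting on an infinite-dimensional Hilbert space, so the corresponding subrepresentation generates a type $\mathrm{II}_\infty$ factor.  On $p_\vepsm \ltwo(G/B)$ the commutant is $\CC$, so $\St_\vepsm$ is irreducible, and pairwise inequivalence of the $\St_\vepsm$ is immediate from the orthogonality of the minimal central projections.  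Infinite-dimensionality of $\St_\vepsm$ follows because its unique (up to scalar) $B$-fixed vector $\xi_\vepsm$ cannot be $G$-fixed: the only $G$-invariant vectors in $\ltwo(G/B)$ are constants, which are not $\ell^2$ since $G/B$ is infinite.  The ``in particular'' clause is immediate from the bound $|q_{w, \vepsm}| \geq q_{w, \mathbf{1}}$ valid on $(0, 1]^S$, which forces $C = \emptyset$ once $\sum_w q_{w, \mathbf{1}} = \infty$.

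For the second bullet, choose a vertex so that $K \supseteq B$ and write $K = B W_I B$ for the (necessarily finite) spherical parabolic $W_I \leq W$ stabilising the chosen vertex in the reference chamber, with $I \subsetneq S$ a clique in the commutation graph of $(W, S)$.  The quasi-regular representation $\lambda_{G, K}$ is then the restriction of $\lambda_{G, B}$ to the invariant subspace $e_K \ltwo(G/B) \cong \ltwo(G/K)$ for the spherical idempotent $e_K \in \cN_\qm(W_I)$, and the commutant of $\lambda_{G, K}$ is the corner $e_K \cN_\qm(W) e_K$.  After the Kazhdan--Lusztig parameter reduction, $e_K$ corresponds to the Iwahori \emph{trivial} character of $W_I$, which in Garncarek's normalisation at $q_s = 1/d_s$ is the Hecke character with $\vepsm|_I = \mathbf{-1}$; hence $e_K p_\vepsm = 0$ for every $\vepsm$ with $\vepsm|_I \neq \mathbf{-1}$.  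For the remaining case $\vepsm|_I = \mathbf{-1}$ one has $|q_{w, \vepsm}| = d^{\ell(w) - 2 n_+(w)}$ under constant thickness $d_s \equiv d$, where $n_+(w) = \sum_{s \notin I,\ \veps_s = +1} n_s(w)$ counts the positively-signed non-$I$ letters in any reduced expression for $w$.  A Poincaré series estimate using the graph-product/amalgam structure of $W$ and the hypotheses that $I \subsetneq S$ is spherical and $(W, S)$ is irreducible of rank $\geq 3$ then shows that $\sum_w |q_{w, \vepsm}|$ diverges, so $\vepsm \notin C$.  Consequently $e_K \cN_\qm(W) e_K = e_K M e_K$ is a $\mathrm{II}_1$ factor and $\lambda_{G, K}$ is a type $\mathrm{II}_\infty$ factor representation.

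The main obstacle is the final combinatorial step: the uniform divergence of the multiparameter Poincaré series of $W$ at every mixed parameter family $(d^{-\veps_s})_{s \in S}$ with $\veps_s = -1$ for all $s \in I$.  It is here that the constant-thickness, rank $\geq 3$ and irreducibility hypotheses intervene simultaneously; the divergence reflects the geometric fact that in such a Coxeter system reduced expressions for a generic $w \in W$ contain many letters from both $I$ and $S \setminus I$, so no choice of signs outside $I$ can compensate for the factor $d$ contributed by each $I$-letter.
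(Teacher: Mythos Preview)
Your overall framework is right --- identify the commutant with $\cN_\qm(W)$, apply Theorem~\ref{thm:factoriality}, and read off the decomposition --- but two genuine gaps remain, and your route to the second bullet is unnecessarily hard.

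\textbf{The type $\mathrm{II}_\infty$ claim.}  You write that since $M$ is a $\mathrm{II}_1$ factor acting on an infinite-dimensional Hilbert space, the subrepresentation generates a type $\mathrm{II}_\infty$ factor.  This does not follow: the commutant of a $\mathrm{II}_1$ factor can be either $\mathrm{II}_1$ or $\mathrm{II}_\infty$, depending on the coupling constant (the standard representation of $M$ on $\Ltwo(M)$ has $M' \cong M^{\mathrm{op}}$, which is $\mathrm{II}_1$).  The paper supplies the missing argument: if the factor part $\pi$ were finite, then $G/\ker\pi$ would, by a structure theorem for totally disconnected groups with finite-type representations, contain a compact open normal subgroup; a result of Tits on $(B,N)$-pairs then forces $G/\ker\pi$ to be compact, contradicting the existence of a type $\mathrm{II}$ representation.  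This step is not routine and cannot be skipped.

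\textbf{Infinite-dimensionality of $\St_\vepsm$.}  Your argument shows only that $\St_\vepsm$ is non-trivial, not that it is infinite-dimensional: a finite-dimensional irreducible representation with a $B$-fixed vector need not be trivial in general.  The clean argument (used in the paper) is that $\lambda_{G,B}$ is a subrepresentation of the regular representation of the non-compact group $G$, hence has no finite-dimensional subrepresentations.

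\textbf{The second bullet.}  You have correctly identified that $e_K p_\vepsm = 0$ unless $\vepsm|_I = \mathbf{-1}$, but then you attempt a direct Poincar\'e-series divergence argument that you do not complete.  The paper avoids this entirely by first observing that with constant thickness one has $C \subseteq \{\mathbf{1}\}$: indeed, if $\veps_s = -1$ for some $s$, pick $t \in S$ with $[s,t]\neq e$ (possible by irreducibility), and note that $|q_{(st)^n,\vepsm}| \geq 1$ for all $n$, so the series diverges already on the infinite dihedral subgroup $\langle s,t\rangle$.  Since $\mathbf{1}|_I \neq \mathbf{-1}$, the only possibly surviving projection $p_{\mathbf{1}}$ is killed by $e_K$ via a one-line eigenvalue comparison: $\mathbb{1}_{BsB}$ acts by $d_s$ on $p_K$ but by $-1$ on $p_{\mathbf{1}}$.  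No growth estimate is needed.
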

This result provides a complete description of square integrable Iwahori-spherical representations.  The existence of the irreducible direct summand $\St_{\mathbf{1}}$ of $\lambda_{G,B}$ under the condition that $\sum_{w \in W} q_{w,\textbf{1}} < \infty$ follows from the existence of the Steinberg representations, already described by Matsumoto in \cite[Number 1.3]{matsumoto69}.  The representations $\St_{\vepsm}$ appearing in Corollary \ref{cor:spherical-representations} are analogues of the classical Steinberg representation in as far as they describe all Iwahori-spherical discrete series representations and they are associated with Hecke eigenvectors, as is apparent from the proof of the corollary.  Factoriality of the orthogonal complement of the Steinberg representations is in stark contrast to previously observed behaviour of spherical representation theory.  In the classical cases, such as groups acting strongly transitively and type preserving on trees \cite{cartier73,figa-talamancanebbia91} and more generally on affine buildings \cite{matsumoto77,capraceciobotaru15}, the Iwahori-Hecke algebra associated with $B$ is a finitely generated module over the abelian spherical Hecke algebra, which is associated with the maximal compact subgroup $K$.  It was already shown in \cite[Theorem 3.5]{lecureux10} (see also \cite{abramenkoparkinsonvanmaldeghem13} and \cite[Corollary 2]{monod19-gelfand}) that the spherical Hecke algebra in the setting of our Corollary \ref{cor:spherical-representations} cannot be abelian.  The decomposition of the quasi-regular representation we provide, however, goes far beyond the statement of the spherical Hecke algebra being non-abelian, and similar examples were not obtained before.  In particular, it follows already from \cite{suzuki17} (see also \cite[Theorem~G]{raum15-powers} whose proof is not correct as stated, but the result can be recovered based on Suzuki's article) that there are groups acting on trees whose regular representation is factorial, but the group action is not (and cannot be) strongly transitive.

In the second part of this article, we focus on unitary representation theory of right-angled Coxeter groups.  In \cite[Notes 19.2]{davis08} and \cite[Proposition 4.2]{caspersklisselarsen21} it was observed that generic Hecke algebras associated with right-angled Coxeter groups are isomorphic with the respective group algebras.  A similar phenomenon is known in the spherical case \cite[Chapter IV, Exercise 27]{bourbaki-lie-groups-lie-algebras}.  We put this observation into a systematic context of graph products of representations.  Thanks to Theorem \ref{thm:factoriality} we can single out a family $\lambda_\am$, $\am \in (-1,1)^S$ of finite factor representations arising from graph product representations of an irreducible, right-angled Coxeter system on at least three generators.  Our second main result shows that close to the regular representation these representations are pairwise not unitarily equivalent, even though they sometimes generate the same Hecke von Neumann algebra. We expect that in fact they are  pairwise non-equivalent for all $\am \in (-1,1)^S$ and provide further results supporting this belief.  This is an interesting phenomenon in view of the open classification problems for Hecke operator algebras and it has non-trivial consequences for the character space of right-angled Coxeter groups.
\begin{introtheorem}
  \label{thm:unitary-equivalence}
  Let $(W,S)$ be an irreducible, right-angled Coxeter system with at least three generators.  There are finite factor representations $\lambda_\am$, $\am = (a_s)_{s \in S}\in (-1, 1)^S$ of $W$ such that $\cN_\qm(W) = \lambda_\am(W)'' \oplus A_\am$ for $q_s = \frac{1 - |a_s|}{\sqrt{1 - a_s^2}}$ and some finite dimensional abelian von Neumann algebra $A_\am$ depending on $\am$.  Further, there is an open set $U \subset (-1,1)^S$ containing $\bf{0}$ such that the representations $(\lambda_\am)_{\am \in U}$ are pairwise unitarily inequivalent.
\end{introtheorem}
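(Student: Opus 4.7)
The plan is to realise $\lambda_\am$ as a graph product of $\ZZ/2\ZZ$-representations with cyclic vectors, identify the generated von Neumann algebra with the main factor summand of an appropriate multiparameter Hecke von Neumann algebra via Theorem~A, and then distinguish unitary equivalence classes through the canonical character on a finite factor.

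For each $s \in S$ and $a_s \in (-1,1)$ I would choose a pointed unitary representation $(\pi_{a_s}, H_s, \xi_s)$ of $\ZZ/2\ZZ = \langle s\rangle$ with $\langle \pi_{a_s}(s)\xi_s, \xi_s\rangle = a_s$; concretely $H_s = \CC^2$, $\pi_{a_s}(s) = \operatorname{diag}(1,-1)$ and $\xi_s = \bigl(\sqrt{(1+a_s)/2},\, \sqrt{(1-a_s)/2}\bigr)$ works. The graph product of these pointed representations along the Coxeter graph of $(W,S)$ then yields a unitary representation $\lambda_\am$ of $W$ on a pointed Hilbert space $(H_\am, \xi_\am)$. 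Each state $\langle \pi_{a_s}(\cdot)\xi_s, \xi_s\rangle$ is tracial, being a state on the commutative algebra $\CC^2$, so graph product functoriality makes $\omega_\am = \langle \cdot\,\xi_\am, \xi_\am\rangle$ a normal tracial state on $\lambda_\am(W)''$ with $\omega_\am(\lambda_\am(s_1\cdots s_n)) = a_{s_1}\cdots a_{s_n}$ on any reduced expression.

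Setting $q_s = (1-|a_s|)/\sqrt{1-a_s^2} = \sqrt{(1-|a_s|)/(1+|a_s|)}$, the next step is to build a normal surjective $*$-homomorphism $\cN_\qm(W) \thra \lambda_\am(W)''$. On each Hecke generator $T_s$ the assignment is forced by matching the Hecke spectrum of $T_s$ with the spectrum of a suitable real-linear combination $\alpha_s\pi_{a_s}(s) + \beta_s\cdot\id$, with coefficients determined by the desired eigenvalues; global consistency along reduced words then follows from the graph product recursion. Applying Theorem~A in the $r=2$ case gives $\cN_\qm(W) = M \oplus \bigoplus_{\vepsm\in\tilde C}\CC p_\vepsm$ with $M$ a factor. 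Comparing $\omega_\am$ with the canonical Hecke trace identifies the kernel of the $*$-homomorphism with a subsum of the minimal projections $p_\vepsm$, and $M$ maps isomorphically onto $\lambda_\am(W)''$. This yields the decomposition $\cN_\qm(W) = \lambda_\am(W)''\oplus A_\am$ with $A_\am$ finite-dimensional abelian (of dimension at most $2^{|S|}$), and simultaneously shows that $\lambda_\am$ is a finite factor representation.

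For the unitary inequivalence, two finite factor representations are unitarily equivalent precisely when their canonical characters—obtained by composing with the unique normal tracial state on the generated factor—agree on the group. Since $\lambda_\am(W)''$ is a factor with $\xi_\am$ cyclic and separating, $\omega_\am$ is a positive multiple of the canonical trace, so the character of $\lambda_\am$ equals $\omega_\am\circ\lambda_\am$, whose value on $s$ is $a_s$; hence $\am\mapsto\lambda_\am$ separates parameters. The open neighborhood $U$ of $\mathbf{0}$ arises naturally: at $\am=\mathbf{0}$ we have $q_s=1$, the weighted growth series $\sum_{w\in W} 1$ diverges since an irreducible right-angled Coxeter system on at least three generators is infinite, so $\tilde C=\emptyset$ and $\cN_\qm(W)$ is itself a factor, a condition that persists on an open neighborhood of $\mathbf{0}$ by openness of the divergence of the weighted growth series, making all the identifications above particularly clean. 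The main obstacle is the matching step: reconciling the Hecke quadratic relation for $T_s$ with the graph product recursion for $\lambda_\am(s)$ forces exactly the reparametrisation $q_s = \sqrt{(1-|a_s|)/(1+|a_s|)}$, and identifying the kernel of the resulting homomorphism as the specific subsum of eigenvector projections $p_\vepsm$—rather than some abstract direct summand—requires a careful comparison of the standard Hecke basis with the graph product reduced-word basis.
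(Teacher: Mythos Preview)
There is a genuine gap in the first part of your argument. Your graph product representation $\lambda_\am$ is unitarily equivalent to what the paper calls $\tilde\lambda_\am$, and the affine relation you invoke, inverted as $T_s^{(\qm)} = z_s^{-1}\bigl(\lambda_\am(s) - a_s\cdot\id\bigr)$, shows that the von Neumann algebras generated by $\{T_s^{(\qm)}\}_{s\in S}$ and by $\{\lambda_\am(s)\}_{s\in S}$ literally coincide. Hence your map $\cN_\qm(W) \to \lambda_\am(W)''$ is an \emph{isomorphism}, not merely a surjection with possibly nontrivial kernel. When $\tilde C \neq \emptyset$ one has $\lambda_\am(W)'' \cong \cN_\qm(W)$ with nontrivial centre, so your $\lambda_\am$ is \emph{not} a factor representation and the asserted decomposition $\cN_\qm(W) = \lambda_\am(W)'' \oplus A_\am$ with $\lambda_\am(W)''$ a factor fails: you get $A_\am = 0$ while the left-hand side is not a factor. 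The paper repairs this by \emph{defining} $\lambda_\am$ to be the restriction of the graph product representation $\tilde\lambda_\am$ to the orthogonal complement of its finite-dimensional subrepresentations; by Theorem~A together with the description of Hecke eigenvectors, this cuts down precisely to the factor summand $M$, so that $\lambda_\am(W)'' \cong M$ and $A_\am$ is the remaining finite-dimensional abelian piece.

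A secondary error: your formula $\omega_\am(\lambda_\am(s_1\cdots s_n)) = a_{s_1}\cdots a_{s_n}$ on a reduced expression is false beyond length one. The vacuum state of a graph (or free) product is not multiplicative on group elements, only on graph-independent \emph{centred} elements; for non-commuting $s,t$ one has $\omega_\am(\lambda_\am(stst)) = a_s^2 + a_t^2 - a_s^2 a_t^2$, not $a_s^2 a_t^2$. Fortunately your inequivalence argument on $U$ only uses the length-one values $\omega_\am(\lambda_\am(s)) = a_s$, which are correct, and on $U$ (where $\tilde C = \emptyset$, so $\lambda_\am(W)''$ is already a factor and $\omega_\am$ is its unique normalised trace) this does separate parameters. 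That portion of your argument matches the paper's.
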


Combining the continuous parameter dependence obtained from the construction as graph product representations, Theorem \ref{thm:unitary-equivalence} implies the existence of high-dimensional Euclidean subspaces of the character space of right-angled Coxeter groups.   This result is in contrast to recent classification results for traces on group \Cstar-algebras obtained for example in \cite{bekka07-superrigidity,petersonthom14,creutzpeterson2013-character-rigidity,boutonnethoudayer19,bekka19,bekkafrancini20,baderboutonnethoudayerpeterson20}.  While graph products of groups are expected to have complicated trace spaces, their precise topological structure remains unclear.
\begin{introcorollary}
  \label{cor:traces}
  Let $(W,S)$ be an irreducible, right-angled Coxeter system with at least three generators.  There is an $|S|$-dimensional Euclidean subspace of the extremal points of $\mathrm{Char}(W)$ whose interior contains the regular trace.
\end{introcorollary}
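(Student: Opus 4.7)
The strategy is to pass from the family of finite factor representations $(\lambda_\am)_{\am \in U}$ supplied by Theorem~\ref{thm:unitary-equivalence} to a family of extremal characters, and to show that this yields a topologically embedded copy of an open subset of $\RR^{|S|}$ inside the extremal boundary of $\mathrm{Char}(W)$. For each $\am \in (-1,1)^S$, the finite factor $\lambda_\am(W)''$ is of type $\mathrm{II}_1$ and therefore carries a unique faithful normal tracial state $\tau$; the composition $\tau_\am := \tau \circ \lambda_\am$ is then an extremal character of $W$.

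First, I would verify continuity of the assignment $\psi : \am \mapsto \tau_\am$ with respect to the topology of pointwise convergence on $\mathrm{Char}(W)$. This is expected to be immediate from the graph product construction of $\lambda_\am$: the relevant cyclic trace vector depends continuously, indeed polynomially, on $\am$, and in particular one should read off the simple formula $\tau_\am(s) = a_s$ for every generator $s \in S$ directly from the construction.

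Second, injectivity of $\psi$ on the open neighbourhood $U$ of $\mathbf{0}$ provided by Theorem~\ref{thm:unitary-equivalence} follows from the pairwise unitary inequivalence of the $\lambda_\am$ for $\am \in U$, since two finite factor representations are unitarily equivalent if and only if their characters coincide (this uses uniqueness of the tracial state on a $\mathrm{II}_1$ factor together with a standard GNS argument). Combining injectivity with the continuous left inverse $\tau \mapsto (\tau(s))_{s \in S}$ upgrades $\psi|_U$ to a topological embedding, so $\psi(U)$ is an $|S|$-dimensional Euclidean subspace of the extremal points of $\mathrm{Char}(W)$.

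Finally, at $\am = \mathbf{0}$ one has $a_s = 0$ and thus $q_s = 1$ for every $s \in S$, so $\cN_\qm(W) = \grpvn(W)$ and $\lambda_\mathbf{0}$ is unitarily equivalent to the regular representation of $W$. Consequently $\tau_\mathbf{0}$ is the regular trace, and since $\mathbf{0}$ lies in the interior of $U$ it lies in the interior of the image $\psi(U)$. The main technical obstacle is to pin down the explicit formula $\tau_\am(s) = a_s$, which both justifies continuity and supplies the continuous left inverse; this should however fall out readily from the graph product formalism used to define $\lambda_\am$ in Theorem~\ref{thm:unitary-equivalence}.
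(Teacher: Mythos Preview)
Your proposal is correct and follows essentially the same approach as the paper's proof, which is quite terse: it picks $U$ so that $\tilde\lambda_\am = \lambda_\am$ is factorial, invokes Theorem~\ref{thm:unitary-equivalence} for pairwise inequivalence, notes continuity and $\lambda_{\mathbf{0}} = \lambda$, and concludes that $\am \mapsto \lambda_\am$ is a homeomorphism onto its image. Your write-up makes the topological embedding argument (via the continuous left inverse $\tau \mapsto (\tau(s))_{s\in S}$) explicit, and the formula $\tau_\am(s) = a_s$ you rely on is exactly Lemma~\ref{lem:calcuation-character} in the paper. One minor remark: the detour through Theorem~\ref{thm:unitary-equivalence} for injectivity of $\psi$ is unnecessary once you have $\tau_\am(s)=a_s$, since that formula already gives injectivity directly (and this is in fact how the paper proves the injectivity part of Theorem~\ref{thm:unitary-equivalence}); your argument is still valid, just slightly circular in its appeal to that theorem.
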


This article contains four sections.  After the introduction, we collect some preliminaries on the various topics we treat.  In Section \ref{sec:factoriality}, we prove our first main Theorem \ref{thm:factoriality} and its Corollary \ref{cor:spherical-representations}.  In Section \ref{sec:graph-products-representations}, we introduce the graph products of representations and prove Theorem \ref{thm:unitary-equivalence} and Corollary \ref{cor:traces}.

\subsection*{Acknowledgements}

S.R.\ was supported by the Swedish Research Council through grant number 2018-04243 and by the European Research Council (ERC) under the European Union's Horizon 2020 research and innovation programme (grant agreement no. 677120-INDEX).
A.S.\ was partially supported by the National Science Center (NCN) grant no.~2020/39/I/ST1/01566. 

The authors are deeply grateful to Pierre-Emmanuel Caprace for pointing out that the factor representation in Corollary \ref{cor:spherical-representations} cannot be finite, and for clarifying remarks on the representation theory of reductive groups.  His remarks also led us to state the factoriality result for spherical representations.  We thank Mateusz Wasilewski for useful comments improving the exposition of our work. We also thank Pierre-Emmanuel Caprace and Bertrand R{\'e}my for pointing out the references \cite{abramenkoparkinsonvanmaldeghem13} and \cite{lecureux10}. Finally we are very grateful to the anonymous referee for a very careful reading of our paper and suggesting several small improvements of presentation.

\section{Preliminaries}
\label{sec:preliminaries}

\subsection{Locally compact groups, convolution algebras and regular representations}
\label{sec:representations-locally-compact}

Standard resources for this material are \cite{deitmarechterhoff14} and the appendices of \cite{bekkadelaharpevalette08}.  Every locally compact group $G$ admits a left invariant Radon measure $\mu$, which is unique up to a positive scalar.  The modular function of $G$ is the unique group homomorphism $\Delta: G \to \RR_{>0}$ satisfying $\Delta(x) \mu(B) = \mu(Bx)$ for all measurable $B$ and all $x \in G$.

The space of continuous and compactly supported functions $\contc(G)$ becomes a *-algebra when equipped with the product
\begin{gather*}
  f*g(x) = \int_G f(y)g(y^{-1}x) \rmd \mu(y)
\end{gather*}
and the involution
\begin{gather*}
  f^*(x)  = \Delta(x^{-1}) \ol{f(x^{-1})}
  \eqstop
\end{gather*}
The convolution product of $\contc(G)$ extends to a *-representation $\contc(G) \to \bo(\Ltwo(G))$.  Given a compact open subgroup $K \leq G$, its \emph{(coset) Hecke algebra} is the *-subalgebra $\CC[G,K] \subset \contc(G)$ of $K$-biinvariant functions.  The Hecke algebra can be described as $\CC[G,K] = p_K \contc(G) p_K$ where $p_K = \mathbb{1}_K$ is the indicator function of $K$; we shall always normalise the Haar mesaure of $G$ on a suitable compact open subgroup $K$.

The \emph{left regular representation} is the the unique group homomorphism $\lambda: G \to \cU(\Ltwo(G))$ satisfying $\lambda_x(f)(y) = f(x^{-1}y)$ for all $f \in \contc(G) \subset \Ltwo(G)$ and all $x,y \in G$.  The \emph{group von Neumann algebra} of $G$ is the double commutant
\begin{gather*}
  \rL(G) = \lambda(G)'' = \ol{\contc(G)}^{\mathrm{SOT}}
  \eqcomma
\end{gather*}
where SOT denotes the strong operator topology, that is the topology of pointwise convergence.  Given a closed subgroup $H \leq G$, we denote by $\lambda_{G, H}: G \to \Ltwo(G/H)$ the \emph{quasi-regular representation}.  If $H \leq G$ is open, $G/H$ is discrete and $\lambda_{G, H}$ is the permutation representation of the left translation action on $G/H$.  In particular, it has no finite dimensional subrepresentations if $G/H$ is infinite.  For a compact open subgroup $K \leq G$, the quasi-regular representation $\lambda_{G,K}$ is unitarily equivalent to the restriction of $\lambda$ to the subrepresentation generated by $K$-fixed vectors.  The \emph{Hecke von Neumann algebra} of the pair $K \leq G$ is
\begin{gather*}
  \rL(G,K) = p_K \rL(G) p_K = \ol{\CC[G,K]}^{\mathrm{SOT}} \subset \bo(\ltwo(K \backslash G))
  \eqstop
\end{gather*}
Both the group von Neumann algebra and the Hecke von Neumann algebra admit counterparts $\rR(G)$ and $\rR(G, K)$ constructed from the \emph{right regular representation} of $G$, which is defined by $\rho_x(f)(y) = \Delta_G(x)^{\frac{1}{2}} f(yx)$ for all $f \in \contc(G) \subset \Ltwo(G)$ and all $x,y \in G$.  Since $\lambda$ and $\rho$ are unitarily equivalent, we have $\rL(G) \cong \rR(G)$ and $\rL(G,K) \cong \rR(G,K)$.

A unitary representation $\pi: G \to \cU(H)$ is a subrepresentation of $\lambda^{\oplus \kappa}$ for some cardinal $\kappa$ if and only if it is \emph{square integrable} in the sense that for a dense set of $\xi \in H$ the function $g \mapsto \langle \pi(g) \xi, \xi \rangle$ is square integrable with respect to the Haar measure of $G$.  See \cite[Chapter 14]{dixmier69-representations} or \cite[Theorem 4.6]{rieffel69}.  In order to avoid a clash of terminology, we prefer the term ``square integrable'' to ``discrete series'' here, since the latter usually makes the additional assumption of irreducibility in the representation theory of reductive groups.

\subsection{Coxeter systems}
\label{sec:coxeter-systems}

The books \cite{humphreys90} and \cite{davis08} both describe Coxeter groups.  A \emph{Coxeter matrix} over a non-empty set $S$ is a symmetric $S \times S$ matrix whose diagonal entries equal one and whose off diagonal entries lie in $\NN_{\geq 2} \cup \{\infty\}$.  Let $M = (m_{st})_{s,t \in S}$ be a Coxeter matrix.  The \emph{Coxeter system} $(W, S)$ associated with $M$ is given by the group 
\begin{gather*}
  W  = \langle S \mid \forall s,t \in S: \, e = (st)^{m_{st}} \rangle
\end{gather*}
together with its generating set $S$.
The Coxeter matrix and its Coxeter system are called \emph{right-angled} if $m_{st} \in \{2, \infty\}$ for all different $s,t \in S$. They are \emph{irreducible} if the Coxeter matrix is not the Kronecker tensor product of two non-trivial Coxeter matrices.  Equivalently, there is no partition $S = S_1 \sqcup S_2$ such that $W = \langle S_1 \rangle \oplus \langle S_2 \rangle$.

Given $w \in W$ we denote by $|w|$ its \emph{word length} with respect to the generating set $S$.  A word $w$ with letters in $S$ is called \emph{reduced} if it has $|w|$ many letters.  For a right-angled $W$ and $s \in S$ the \emph{$s$-length} $\ell_s(w)$ of $w \in W$ is the number of occurrences of $s$ in any reduced word representing $w$.  Tits' solution to the word-problem in Coxeter groups \cite[Th{\'e}or{\`e}me 3]{tits69} provides normal forms for elements in $W$ and shows that the $s$-length is well defined. Similarly, if for a given word $w\in W$ and $s \in S$ we have $\ell_s(w)\geq 1$ we can define the \emph{position of $s$ in $w$}, $\pos_s(w)$, as the minimal position at which $s$ can occur in a reduced expression for $w$; we shall also say that $w$ \emph{starts with $s$} if $\pos_s(w)=1$ and that \emph{$w$ ends with $s$} if $\pos_s(w^{-1})=1$. Given a subgroup $D = \langle T \rangle$ for $T \subset S$, we call a double coset $DwD \subset W$ \emph{non-degenerate} if it is not equal to a simple coset, i.e.\ to $wD$.

Given a subset $T \subset S$, we denote by $W|_T$ the Coxeter group generated by the elements of $T$.  Given $s \in S$, for simplicity, we write $S \setminus s$ to denote $S \setminus \{s\}$.

\subsection{Generic Hecke algebras}
\label{sec:generic-hecke-algebras}

The book \cite{humphreys90} introduces generic Hecke algebras.  Also \cite{davis08} introduces Hecke algebras and discusses their von Neumann algebraic completions.  Given a Coxeter system $(W, S)$, we denote its real \emph{deformation parameters} by
\begin{gather*}
  \RR_{> 0}^{(W,S)} = \{ \qm:=(q_s)_{s \in S} \in \RR_{> 0}^S \mid \forall s,t \in S: (\exists w \in W: wsw^{-1} = t \implies q_s = q_t)\}
  \eqstop
\end{gather*}
Frequently, the expression
\begin{gather*}
  p_s = \frac{q_s - 1}{\sqrt{q_s}}
\end{gather*}
will be used.  If $(W, S)$ is right-angled, Tits' solution to the word-problem implies that $\RR_{>0}^{(W, S)} = \RR_{>0}^S$.  Indeed, if $s,t, s_1, \dotsc, s_n \in S$ satisfy $s_1 \dotsm s_n s s_n \dotsm s_1 = t$, then comparing the length of both sides, it follows that $[s, s_i] = e$ for all $i \in \{1, \dotsc, n\}$ and thus $s = t$.  For $\qm \in \RR_{>0}^{(W, S)}$ and a reduced word $w = s_1 \dotsm s_n \in W$ we write $q_w = q_{s_1} \dotsm q_{s_n}$.  If $\vepsm \in \{-1,1\}^S$, we also write $q_{s,\vepsm} = \varepsilon_s q_s^{\varepsilon_s}$ and $q_{w, \vepsm} =  q_{s_1, \vepsm} \dotsm q_{s_n, \vepsm}$. We will also use the slightly non-standard convention $q_{w, \vepsm}^{\frac{1}{2}}:= \textup{sgn}(q_{w, \vepsm}) |q_{w, \vepsm}|^{ \frac{1}{2}}$. Note that $q_w = q_{w,\textbf{1}}$.

The \emph{(generic) Hecke algebra} associated with $(W, S)$ and the multiparameter $\qm \in \RR_{> 0}^{(W,S)}$ is the *-algebra with the presentation
\begin{align*}
  \CC_\qm[W]  = \bigl \langle (T_s^{(\qm)})_{s \in S} \mid & \forall s,t \in S, m_{st} < \infty: \, T_s^{(\qm)}T_t^{(\qm)} \dotsm = T_t^{(\qm)}T_s^{(\qm)} \dotsm \, \text{($m_{st}$ many factors)} \eqcomma \\
                                                 & (T_s^{(\qm)} - q_s^{\frac{1}{2}})(T_s^{(\qm)} + q_s^{-\frac{1}{2}})=0 \eqcomma \\
                                                 & (T_s^{(\qm)})^* = T_s^{(\qm)} \bigr \rangle
  \eqstop
\end{align*}
For a reduced expression $w = s_1 \dotsm s_n \in W$, we write $T_w^{(\qm)} = T_{s_1}^{(\qm)} \dotsm T_{s_n}^{(\qm)}$.   One can check that this does not depend on the choice of the reduced expression.  In particular, $(T_w^{(\qm)})^* = T_{w^{-1}}^{(\qm)}$ holds for all $w \in W$.  Further, for all $w,w' \in W$ satisfying $|w w'| = |w| + |w'|$ we have $T_{ww'}^{(\qm)} = T_w^{(\qm)} T_{w'}^{(\qm)}$.

For every $r \in [1,\infty]$, the algebra $\CC_\qm[W]$ admits a representation by bounded operators on $\lp(W)$ satisfying the following formula on its generators.
\begin{gather*}
  \pi_\qm^r(T_s^{(\qm)}) \delta_w
  =
  \begin{cases}
    \delta_{sw} & \text{if } |sw| > |w|, \\
    \delta_{sw} + p_s \delta_{w} & \text{if } |sw| < |w| \eqstop    
  \end{cases}
\end{gather*}
For $r \in [1, \infty]$ we define the Hecke $\lp$-convolution algebra by
\begin{gather*}
  \cN_\qm^r(W) = \pi_\qm^r(\CC_\qm[W])''
  \eqstop
\end{gather*}
Using the isometric isomorphism defined by $J: \lp(W) \lra \lp(W): \delta_w \mapsto \delta_{w^{-1}}$, one defines another representation of $\CC_q[W]$ on $\lp(W)$ by $\rho_\qm^r(T_w^{(\qm)}) = J \pi_\qm^r(T_w^{(\qm)}) J$.  It satisfies
\begin{gather*}
  \rho_\qm^r(T_s^{(\qm)}) \delta_w
  =
  \begin{cases}
    \delta_{ws} & \text{if } |ws| > |w|, \\
    \delta_{ws} + p_s \delta_{w} & \text{if } |ws| < |w| \eqstop    
  \end{cases}
\end{gather*}
A short calculation using $\rho_\qm$ shows then that the vector $\delta_e \in \lp(W)$ is cyclic and separating for $\pi_\qm(\CC_\qm[W])$.  This allows us to study elements $x \in \cN_\qm^r(W)$ by means of their images $\hat x = x \delta_e \in \lp(W)$ and  shows injectivity of $\pi_\qm^r$.  We will henceforth often suppress the representations $\pi_\qm^r$ and $\rho_\qm^r$ in our notation, identifying $T_w^{(\qm)}$ with its image in $\bo(\lp(W))$ and making use of the $\CC_\qm[W]$-bimodule structure of $\lp(W)$.  We call a vector $\xi \in \lp(W)$ \emph{Hecke central (for the parameter $\qm$)} if $T_w^{(\qm)}\xi = \xi T_w^{(\qm)}$ for all $w \in W$.

The next lemma in the case of $r=2$ can be deduced in a standard way from \cite[Proposition 19.2.1]{davis08}; for non-deformed convolution algebras associated with locally compact groups the corresponding result can be found in \cite{dawsspronk19}.

\begin{lemma}\label{commutants}
Let $r \in (1, \infty)$. Then $\pi_\qm^r(\CC_\qm[W])' = \rho_\qm^r(\CC_\qm[W])''$.	Thus $T \in B(\ell^r(W))$ belongs to the center of $\cN_\qm^r(W)$ if and only if $T \in \pi_\qm^r(\CC_\qm[W])' \cap\rho_\qm^r(\CC_\qm[W])'$. If this is the case, $T\delta_e$ is a central vector in 	$\ell^r(W)$.
\end{lemma}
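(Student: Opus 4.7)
The plan is to prove first the commutant equality $\pi_\qm^r(\CC_\qm[W])' = \rho_\qm^r(\CC_\qm[W])''$; the centre characterisation and the Hecke centrality of $T\delta_e$ will then follow by short general arguments. The inclusion $\rho_\qm^r(\CC_\qm[W])'' \subset \pi_\qm^r(\CC_\qm[W])'$ reduces to verifying that each generator $\pi_\qm^r(T_s^{(\qm)})$ commutes with each $\rho_\qm^r(T_t^{(\qm)})$, which is a direct case analysis on basis vectors $\delta_w$ distinguishing whether $|sw|$ and $|wt|$ exceed $|w|$.  Combining this generator-level commutation with $\pi_\qm^r(T_w^{(\qm)})\delta_e = \delta_w$ and $\rho_\qm^r(T_w^{(\qm)})\delta_e = \delta_{w^{-1}}$ (both verified by induction on $|w|$) yields the key identity $\pi_\qm^r(T_w^{(\qm)})\delta_v = \rho_\qm^r(T_{v^{-1}}^{(\qm)})\delta_w$ for all $v, w \in W$, which will be the main technical tool.

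For the reverse inclusion I take $T \in \pi_\qm^r(\CC_\qm[W])'$, set $\hat T := T\delta_e \in \ell^r(W)$, and observe that $T\delta_w = \pi_\qm^r(T_w^{(\qm)})\hat T$ for every $w \in W$, so that $T$ is entirely determined by $\hat T$.  For finite subsets $F \subset W$, the truncations $T_F := \sum_{v \in F}\hat T(v)\rho_\qm^r(T_{v^{-1}}^{(\qm)}) \in \rho_\qm^r(\CC_\qm[W])$ satisfy, via the key identity, $T_F \delta_w = \pi_\qm^r(T_w^{(\qm)})\hat T_F \to \pi_\qm^r(T_w^{(\qm)})\hat T = T\delta_w$ in $\ell^r(W)$ as $F \uparrow W$, where $\hat T_F := \sum_{v \in F}\hat T(v)\delta_v$.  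To show $T \in \rho_\qm^r(\CC_\qm[W])''$ I take an arbitrary $A \in \rho_\qm^r(\CC_\qm[W])'$ and aim to verify $AT\delta_w = TA\delta_w$ for every $w \in W$.  Since $T_F \in \rho_\qm^r(\CC_\qm[W])$ commutes with $A$, we have $AT_F\delta_w = T_F A\delta_w$, and the left-hand side converges to $AT\delta_w$ by continuity of $A$.  Setting $\hat A := A\delta_e$ and expanding all relevant objects in the canonical basis via the key identity, both $AT\delta_w$ and $TA\delta_w$ can be identified coordinate-wise with the same doubly indexed scalar sum in $u, v \in W$ involving $\hat A(u)$, $\hat T(v)$ and matrix coefficients of products $\rho_\qm^r(T_{v^{-1}}^{(\qm)})\rho_\qm^r(T_{w^{-1}}^{(\qm)})$, so a Fubini-type argument yields the equality.

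The main obstacle is precisely the justification of this exchange, or equivalently the passage from pointwise convergence of $T_F$ on basis vectors to the identification $\lim_F T_F A\delta_w = TA\delta_w$.  For $r = 2$ this step is classical and follows from Tomita's theorem applied to $(\pi_\qm^2(\CC_\qm[W])'', \delta_e)$ together with $\rho_\qm^2 = J\pi_\qm^2 J$, where $J\delta_w := \delta_{w^{-1}}$ is an anti-unitary involution; this is the content of \cite[Proposition 19.2.1]{davis08}.  For general $r \in (1, \infty)$ one argues directly in the spirit of \cite{dawsspronk19}, exploiting $\ell^r$-summability of $\hat T$ and $\hat A$ together with explicit coordinate formulas for products of $\rho_\qm^r$-operators on basis vectors.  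Once the commutant equality is in place, the symmetric statement $\rho_\qm^r(\CC_\qm[W])' = \pi_\qm^r(\CC_\qm[W])''$ follows by interchanging the roles of $\pi$ and $\rho$, giving
\begin{gather*}
  \cent(\cN_\qm^r(W)) = \pi_\qm^r(\CC_\qm[W])'' \cap \pi_\qm^r(\CC_\qm[W])' = \rho_\qm^r(\CC_\qm[W])' \cap \pi_\qm^r(\CC_\qm[W])'
  \eqstop
\end{gather*}
Finally, for such central $T$ the identities $\pi_\qm^r(T_w^{(\qm)})\hat T = T\delta_w = \rho_\qm^r(T_{w^{-1}}^{(\qm)})\hat T$, the first using $T \in \pi_\qm^r(\CC_\qm[W])'$ together with $\pi_\qm^r(T_w^{(\qm)})\delta_e = \delta_w$ and the second using $T \in \rho_\qm^r(\CC_\qm[W])'$ together with $\rho_\qm^r(T_{w^{-1}}^{(\qm)})\delta_e = \delta_w$, express precisely the Hecke centrality of $\hat T = T\delta_e$ in the $\CC_\qm[W]$-bimodule structure on $\ell^r(W)$.
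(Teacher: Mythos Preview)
Your overall architecture matches the paper's: reduce the commutant equality to showing that any $T \in \pi_\qm^r(\CC_\qm[W])'$ commutes with any $A \in \rho_\qm^r(\CC_\qm[W])'$, approximate $T$ by finite sums $T_F \in \rho_\qm^r(\CC_\qm[W])$ built from $\hat T = T\delta_e$, and then derive the centre statement and Hecke centrality of $T\delta_e$ exactly as you do.  The last two paragraphs of your proof are fine.

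The gap is in the ``Fubini-type argument''.  You correctly flag the passage from $T_F A\delta_w$ to $TA\delta_w$ as the main obstacle, but your proposed fix --- ``exploiting $\ell^r$-summability of $\hat T$ and $\hat A$'' --- is not enough.  Mere $\ell^r$-membership of both vectors does not give absolute convergence of the double sum, and the $T_F$ need not be uniformly bounded in $\cB(\ell^r(W))$, so neither a naive Fubini nor a density argument goes through.  The point you are missing, and which is exactly what the paper (following \cite{dawsspronk19}) uses, is that $\hat T$ automatically lies in $\ell^r(W) \cap \ell^{r'}(W)$: since $T \in \pi_\qm^r(\CC_\qm[W])'$, the Banach-space adjoint $T^*$ lies in $\pi_\qm^{r'}(\CC_\qm[W])'$, and a short computation shows $(T^*\delta_e)_w = \hat T(w^{-1})$, so $\hat T \in \ell^{r'}(W)$ as well.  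This dual integrability lets you approximate $\hat T$ simultaneously in $\|\cdot\|_r$ and $\|\cdot\|_{r'}$ by finitely supported vectors, which is precisely what is needed to run the coordinate-wise comparison of $AT\delta_w$ and $TA\delta_w$ as in \cite[Theorem 1.2]{dawsspronk19}.  Once you insert this observation, your argument is essentially the paper's.
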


\begin{proof}
We only sketch the argument as it follows standard lines, using the duality between $\ell^r(W)$ and $\ell^{r'}(W)$  and the isometry $J:\ell^r(W)\to \ell^{r}(W)$, $J \delta_w = \delta_{w^{-1}}, w \in W$. It suffices to show that if $T \in \pi_\qm^r(\CC_\qm[W])'$ and 
$S \in \rho_\qm^r(\CC_\qm[W])' = J \pi_\qm^r(\CC_\qm[W])'J$, then $TS = ST$. Let $\eta:=T\delta_e \in \ell^r(W)$. 
Then a direct verification shows that for each $w \in W$ we have $(T^* \delta_e)_w = \eta_{w^{-1}}$; thus in fact $\eta \in \ell^{r}(W) \cap \ell^{r'}(W)$ and moreover one can approximate $\eta$ simultaneously in both $\|\cdot\|_r$ and $\|\cdot\|_{r'}$ norms (i.e.\ in $\|\cdot\|_{\min{r,r'}}$) via finitely supported vectors. This implies that for a fixed $w, v \in W$ and $\epsilon>0$ we can find an element $T_0 \in \rho_\qm^r(\CC_\qm[W])$ such that $\|(T-T_0) \delta_w \|_r \|\lambda_{\qm}^r(T_w)\|_{B(\ell^r(W))} \leq \epsilon$, $\|(T^*-T_0^*) \delta_v \|_{r'} \|\rho_{\qm}^{r'}(T_v)\|_{B(\ell^{r'}(W))} \leq \epsilon$. Repeating the same argument for $S$ we can approximate it in an analogous way by an element of $\lambda_\qm^r(\CC_\qm[W])$ and repeating the computation from \cite[Theorem 1.2]{dawsspronk19} yields the equality $((ST)(\delta_w))_v = ((TS)(\delta_v))_w$, so also the first statement of the lemma. The other claims follow easily.
\end{proof}

In the Hilbert space case $r = 2$, the algebra $\cN_\qm(W) = \cN_\qm^2(W)$ is the \emph{multiparameter Hecke von Neumann algebra} associated with $(W,S)$ and $\qm \in \RR_{> 0}^{(W,S)}$.  The representation $\pi_\qm^2$ is a *-representation and it arises from the GNS-construction with respect to the tracial state  $T^{(\qm)}_w \mapsto \langle T^{(\qm)}_w \delta_e , \delta_e \rangle = \delta_{w,e}$ on $\CC_\qm[W]$.

The following parameter reduction is well known from Kazhdan-Lusztig's work \cite{kazhdanlusztig79}.  See also \cite[Proposition 4.7]{caspersklisselarsen21} for the proof of an operator algebraic version for $r=2$. The statement continues to hold in the $\lp$-setting, as can be shown via a direct computation.
\begin{proposition}
  \label{prop:parameter-reduction-hecke-algebra}
  Let $(W,S)$ be a right-angled Coxeter system, let $\qm \in \RR_{>0}^S$, $(\epsilon_s)_{s \in S} \in \{-1,1\}^S$ and $r \in [1,\infty)$.  Define $\qm' = (q_s^{\epsilon_s})_{s \in S}$.  Then the map
  \begin{gather*}
    \CC_\qm[W] \lra \CC_{\qm'}[W]: T_s^{(\qm)} \mapsto \epsilon_s T_s^{(\qm')}
  \end{gather*}
  is well defined and extends to an isometric isomorphism $\cN_\qm^r(W) \cong \cN_{\qm'}^r(W)$.
\end{proposition}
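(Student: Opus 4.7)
The plan is to verify the proposition in two stages: first show the map is a well-defined $*$-isomorphism of abstract generic Hecke algebras, then realise this isomorphism as conjugation by an explicit isometry on $\ell^r(W)$.

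For the algebraic statement, I would check the three families of defining relations for the generators $\epsilon_s T_s^{(\qm')}$. The braid relations reduce, in the right-angled setting, to the commutation relations $[T_s^{(\qm)},T_t^{(\qm)}]=0$ when $m_{st}=2$; these are preserved since the scalars $\epsilon_s,\epsilon_t$ commute. Self-adjointness of $\epsilon_s T_s^{(\qm')}$ is immediate from $\epsilon_s \in \RR$. For the quadratic relation, it suffices to observe that when $\epsilon_s = -1$, multiplying $(T_s^{(\qm')}-q_s^{-1/2})(T_s^{(\qm')}+q_s^{1/2}) = 0$ by $(-1)^2 = 1$ yields exactly $(\epsilon_s T_s^{(\qm')} - q_s^{1/2})(\epsilon_s T_s^{(\qm')} + q_s^{-1/2}) = 0$. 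By the universal property of $\CC_\qm[W]$ this defines a $*$-homomorphism, and applying the same construction with $\vepsm$ replaced by itself produces the inverse, so we obtain a $*$-isomorphism $\Phi_\vepsm \colon \CC_\qm[W] \to \CC_{\qm'}[W]$.

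The next step is to intertwine $\pi_\qm^r$ and $\pi_{\qm'}^r \circ \Phi_\vepsm$ on $\ell^r(W)$. Here the right-angled hypothesis is crucial: the abelianisation of $W$ is $(\ZZ/2\ZZ)^S$, so the assignment $s \mapsto \epsilon_s$ extends to a well-defined group homomorphism $\sigma_\vepsm \colon W \to \{-1,1\}$. Define $U_\vepsm \in \bo(\ell^r(W))$ by $U_\vepsm \delta_w = \sigma_\vepsm(w)\delta_w$; it is an isometric isomorphism with $U_\vepsm^{-1} = U_\vepsm$. A direct calculation on basis vectors shows
\begin{gather*}
  U_\vepsm \, \pi_\qm^r(T_s^{(\qm)}) \, U_\vepsm^{-1} = \pi_{\qm'}^r(\epsilon_s T_s^{(\qm')})
  \eqstop
\end{gather*}
The case $|sw|>|w|$ is immediate; the case $|sw|<|w|$ requires checking $\epsilon_s p_s' = p_s$ where $p_s' = (q_s^{\epsilon_s}-1)/\sqrt{q_s^{\epsilon_s}}$, which is trivial for $\epsilon_s=1$ and an elementary manipulation for $\epsilon_s=-1$.

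Finally, conjugation by $U_\vepsm$ is an isometric algebra isomorphism of $\bo(\ell^r(W))$ that restricts to $\pi_\qm^r(\CC_\qm[W])$ by the intertwining identity above, hence restricts further to the bicommutants $\cN_\qm^r(W)$ and $\cN_{\qm'}^r(W)$, yielding the desired isometric isomorphism. I do not anticipate any substantive obstacle: every computation is local on the generators, and the only place where the right-angled hypothesis is essential is the construction of the sign character $\sigma_\vepsm$, which would otherwise fail because $\ell_s(w)$ need not be well defined.
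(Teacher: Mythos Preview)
Your proof is correct and carries out precisely the ``direct computation'' that the paper alludes to but does not write down. The two-stage structure (algebraic isomorphism, then spatial implementation via the sign-character unitary $U_\vepsm$) is the natural way to establish the isometric extension to $\cN_\qm^r(W)$, and your verification of the quadratic relation and the identity $\epsilon_s p_s' = p_s$ is accurate.
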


A key notion in this paper is that of \emph{(left) Hecke eigenvectors (for the parameter $\qm$)}, that is non-zero vectors $\eta \in \lp(W)$ such that $T_s^{(\qm)}\eta \in \CC\eta$ for all $s \in S$.  The scalars $(c_s)_{s \in S}$ such that $T_s^{(\qm)} \eta = c_s \eta$ for all $s \in S$ are called the weight of $\eta$.  Already in \cite[Section 5]{davisdymarajanusykiewiczokun07}, some natural idempotents in Hecke von Neumann algebras were recognised, one of which is central and corresponds to a Hecke eigenvector.  See also \cite[Proof of Theorem 5.3]{garncarek16}.  %
The next result provides a complete description of Hecke eigenvectors for right-angled Coxeter systems. 
\begin{proposition}
  \label{prop:hecke-eigenvectors}
  Let $(W,S)$ be a right-angled Coxeter system with at least three generators and let $\qm \in (0,1]^S$.  The formal series $\eta_\vepsm = \sum_{w \in W} q_{w, \vepsm}^{\frac{1}{2}} \delta_w$ is a left and right Hecke eigenvector for the parameter $\qm$ whose weights are given by $T_s^{(\qm)} \eta_\vepsm = \eta_\vepsm  T_s^{(\qm)} = \veps_s q_s^{\veps_s \frac{1}{2}} \eta_\vepsm$ for $s \in S$. Thus in particular $\eta_{\vepsm}$ is central.

  Further, given $r \in [1, \infty)$ and writing $C = \{\vepsm \in \{-1, 1\}^S \mid \sum_w q_{w, \vepsm}^{\frac{r}{2}} \text{ converges absolutely}\}$, Hecke eigenvectors in $\ell^r(W)$ for the parameter $\qm$ are precisely non-zero scalar multiples of the vectors $\eta_\vepsm$ for $\vepsm \in C$.  In particular, there is a Hecke eigenvector in $\lp(W)$ if and only if $\sum_{w \in W} q_w ^{\frac{r}{2}} < \infty$. %
\end{proposition}
\begin{proof}
  A short calculation shows that $\eta_\vepsm$ is indeed an eigenvector with weight $(\veps_s q_s^{\veps_s \frac{1}{2}})_{s \in S}$. 
  Indeed, to show that for each $w \in W$, $s \in S$
 \[(T_s \eta_{\vepsm})_w = \veps_s q_s^{\veps_s \frac{1}{2}} (\eta_{\vepsm})_w \]
we need to  consider two cases: if $|sw|>|w|$ then we require that $q_{sw, \vepsm}^{\frac{1}{2}} =  \veps_s q_s^{\veps_s \frac{1}{2}} q_{w, \vepsm}^{\frac{1}{2}}$, which is a direct consequence of the definition of the coefficients $q_{w, \vepsm}$. On the other hand if $|sw|<|w|$, we require that 
$q_{sw, \vepsm}^{\frac{1}{2}} + p_s q_{w, \vepsm}^{\frac{1}{2}} =  \veps_s q_s^{\veps_s \frac{1}{2}} q_{w, \vepsm}^{\frac{1}{2}}$, so equivalently $\veps_s q_s^{-\veps_s \frac{1}{2}} + p_s =  \veps_s q_s^{\veps_s \frac{1}{2}}$. The last equality can be checked separately for $\veps_s = \pm 1$, so we deduce that the displayed formula holds. An analogous computation shows that $\eta_{\vepsm}$ is also a right Hecke eigenvector with the same weight.
  
   We also note that the formula $\sum_{w \in W} q_{w, \vepsm}^{\frac{1}{2}} \delta_w$ defines a vector in $\lp(W)$ if and only if $\vepsm \in C$.  So we have to show that every eigenvector is of this form.

The relation $(T_s^{(\qm)} - q_s^{\frac{1}{2}}) (T_s^{(\qm)} + q_s^{-\frac{1}{2}}) = 0$ shows that the spectrum of $T_s^{(\qm)}$ consists exactly of $q_s^{\frac{1}{2}}$ and $-q_s^{-\frac{1}{2}}$. So for every Hecke eigenvector $\eta \in \lp(W)$ there is some $\vepsm \in \{-1,1\}^S$ such that $T_s^{(\qm)} \eta = \veps_s q^{\veps_s \frac{1}{2}} \eta$ for all $s \in S$.

  We fix $\vepsm \in \{-1,1\}^S$ and calculate with a formal series $\eta = \sum_{w \in W} x_w \delta_w$ with coefficients in $\CC$ to find that
  \begin{align*}
    T_s^{(\qm)} \sum_{w \in W} x_w \delta_w
    & =
      \sum_{|sw| > |w|} x_w \delta_{sw} + \sum_{|sw| < |w|} x_w \delta_{sw} + p_s x_w \delta_w \\
    & =
      \sum_{|sw| > |w|} x_{sw} \delta_w + \sum_{|sw| < |w|} (x_{sw} + p_sx_w) \delta_w
      \eqstop
  \end{align*}
  Hence $T_s^{(\qm)} \eta = \veps_s q_s^{\veps_s \frac{1}{2}} \eta$ implies that $x_{sw} + p_s x_w = \veps_s q_s^{\veps_s \frac{1}{2}} x_w$ for all $w \in W$ satisfying $|sw| < |w|$. This in turn implies that for such $w$
  \begin{align*}
    x_{sw}
    & =
      x_w(\veps_s q_s^{\veps_s \frac{1}{2}} - p_s) \\
    & =
      x_w\bigl (\veps_s q_s^{\veps_s \frac{1}{2}} - \frac{q_s - 1}{\sqrt{q_s}} \bigr ) \\
    & =
      \begin{cases}
        x_w \bigl ( \frac{q_s - q_s + 1}{\sqrt{q_s}} \bigr ) & \text{if } \veps_s = 1 \\
        x_w \bigl ( \frac{-1 - q_s + 1}{\sqrt{q_s}} \bigr ) & \text{if } \veps_s = -1 \\
      \end{cases} \\
    & =
      x_w \veps_s q_s^{-\veps_s \frac{1}{2}}
      \eqstop
  \end{align*}
  Since $w$ starts with $s$ if and only if $|sw| < |w|$, taking inverses implies that $x_{sw} = \veps_s q_s^{\veps_s \frac{1}{2}} x_w$ for all $w \in W$ satisfying $|sw| > |w|$.  So indeed every Hecke eigenvector with weight $(\veps_s q_s^{\veps_s \frac{1}{2}})_{s \in S}$ is a non-zero multiple of $\eta_\veps$.

\end{proof}

We shall frequently use the notation $\eta_\vepsm$ for a formal series $\sum_{w \in W} q_{w, \vepsm}^{\frac{1}{2}} \delta_w$ for all $\vepsm \in \{-1,1\}^S$. Finally note that as the computation in the last part of the proof above remains valid for $r=\infty$ we deduce that the full list of Hecke eigenvectors in $\ell^\infty(W)$ is  given by  $\{\eta_\vepsm:\vepsm \in \{-1, 1\}^S:\veps_s=1 \textup{ if } q_s<1\}$.

\subsection{Buildings and Iwahori-Hecke algebras}
\label{sec:buildings}

The standard reference for buildings is \cite{abramenkobrown08}.  We follow Garrett's presentation from \cite{garrett97} as it leads directly to the comparison of generic Hecke algebras and Iwahori-Hecke algebras.  A \emph{chamber complex} is a simplicial complex $X$ such that all simplices of $X$ are contained in a maximal simplex and such that for every pair $x,y$ of maximal simplices of $X$ there is a chain $x = x_0, x_1, \dotsc, x_n = y$ of adjacent maximal simplices in $X$.  Its maximal simplices are called \emph{chambers}.  They will usually be denoted by $C$.  A chamber complex is \emph{thin} if each facet of a chamber is the facet of exactly two chambers, and it is called \emph{thick} if each facet of a chamber is the facet of at least three chambers.  An important example of a thin chamber complex is the \emph{Coxeter complex} $\Sigma(W,S)$ associated with a Coxeter system $(W, S)$.  Its simplices are indexed by the cosets $W/ \langle T \rangle$, where $T \subset S$ is a proper subset.  The face relation is determined by opposite inclusion.  In particular, the singletons $w \langle \emptyset \rangle = \{w\}$ represent the maximal simplices of $\Sigma(W,S)$.

A \emph{system of apartments} in a chamber complex $X$ is a set $\cA$ of thin chamber subcomplexes such that
\begin{itemize}
\item for each pair of simplices $x,y$ of $X$ there is $A \in \cA$ such that $x,y \in A$, and
\item if $A, A' \in \cA$ contain a common chamber $C$ and a simplex $x$, then there is an isomorphism of chamber complexes $\phi: A \to A'$ that fixes $C$ and $x$ point-wise.
\end{itemize}
Note that in the last axiom \cite[pp.173--174]{abramenkobrown08} is phrased in terms of a pair of simplices $A$ and $B$, and not a chamber $C$ and a simplex $x$, and that the authors there assume a priori that apartments are Coxeter complexes; both the definitions are in fact equivalent.
A \emph{thick building} is a thick chamber complex that admits a system of apartments.  One can prove that for a thick building $X$ there is a unique maximal apartment system.  So if $X$ is thick, as we assume from now on, it makes sense to speak of an \emph{apartment} of $X$ without reference to a specific system of apartments.  There is a Coxeter system $(W, S)$ such that every apartment of $X$ is chamber isomorphic with $\Sigma(W,S)$.  The choice of a \emph{fundamental chamber} $C$ in $X$ and a compatible labelling of its facets by elements of $S$ makes these isomorphisms unique and introduces a \emph{labelling} of all facets of chambers in $X$.  The Coxeter system $(W, S)$ is called the \emph{type} of $X$.  In particular, a \emph{right-angled building} is a building whose Coxeter system is right-angled, and an  \emph{irreducible building} is a building whose Coxeter system is irreducible (the latter property admits also an intrinsic description).  The \emph{rank} of $X$ is by definition the rank of $(W,S)$, which is equal to the number of facets of any chamber in $X$.  The \emph{thickness} of $X$ is the tuple $(d_s)_{s \in S}$ where $d_s-1$ is the cardinality of the set of chambers containing the fundamental chamber's facet labelled by $s$ (so that a building is thick if and only if $d_s\geq 2$ for all $s \in S$).  The building is called \emph{locally finite} if all $d_s$, $s \in S$ are finite.  

Given a locally finite, thick building $X$, its group of simplicial automorphisms $\Aut(X)$ carries the topology of point-wise convergence on simplices, which turns $\Aut(X)$ into a totally disconnected, locally compact group.  A subgroup $G \leq \Aut(X)$ is called \emph{type preserving}, if it leaves invariant any labelling of facets of chambers in X obtained from the choice of a labelled fundamental chamber.  A subgroup $G \leq \Aut(X)$ is called \emph{strongly transitive} if it acts transitively on pairs $(A,C)$ where $A$ is an apartment of $X$ and $C$ is a chamber in $A$.  Given a closed, strongly transitive subgroup $G \leq \Aut(X)$, an \emph{Iwahori subgroup} of $G$, denoted below by $B$,  is the stabiliser of a chamber in $X$.  Every Iwahori subgroup is compact and open and it is unique up to conjugation by elements from $G$.  Iwahori subgroups play the role of the group $B$ in Tits' $(B,N)$-pairs, which can be used to construct buildings from groups.  If $(W, S)$ is the type of $X$, then the \emph{Bruhat decomposition} of a closed, strongly transitive and type preserving subgroup $G \leq \Aut(X)$ provides a bijection $W \to B \backslash G / B$ assigning to $w \in W$ its \emph{Bruhat cell} $BwB$. Note that this identification uses the fact that $W$ can be viewed as the group of type preserving automorphisms of an apartment of $X$. The \emph{Iwahori-Hecke algebra} of $G$ is the algebra $\CC[G,B]$ of $B$-biinvariant and compactly supported functions on $G$ equipped with the convolution product of $\contc(G)$.  We normalise the Haar measure of $G$ on some (or equivalently any) Iwahori subgroup.  A unitary representation $\pi$ of $G$ is called \emph{Iwahori-spherical} if it has a non-zero fixed vector under some Iwahori subgroup of $G$. 

The following identification of Hecke algebras goes back to Iwahori and Matsumoto \cite{iwahorimatsumoto65}.  We refer to \cite[Section 6.2]{garrett97} for a modern presentation, and for the convenience of the reader give a short proof, showing that the scaling factors are chosen correctly.
\begin{theorem}
  \label{thm:iwahori-hecke-algebra-is-generic}
  Let $X$ be a locally finite, thick building of type $(W,S)$ and $G \leq \Aut(X)$ be a closed, strongly transitive and type preserving subgroup.  Let $B \leq G$ be an Iwahori subgroup of $G$ and denote by $(d_s)_{s \in S}$ the thickness of $X$.  Let $q_s = d_s^{-1}$, for $s \in S$.  Then $\CC_\qm(W) \cong \CC[G, B]$ as *-algebras, with an isomorphism given by the map $T_w^{(\qm)} \mapsto (-1)^{|w|}\sqrt{q_w}\mathbb{1}_{BwB}$.
\end{theorem}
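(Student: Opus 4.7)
The plan is to define the candidate map on the generators $T_s^{(\qm)}$, verify that the defining relations of $\CC_\qm[W]$ are respected, and extend to all $T_w^{(\qm)}$ by induction on reduced expressions. By the Bruhat decomposition $G = \bigsqcup_{w \in W} BwB$, the indicator functions $(\mathbb{1}_{BwB})_{w \in W}$ form a linear basis of $\CC[G, B]$. I would normalise Haar measure so that $\mu(B) = 1$, making $\mathbb{1}_B$ the unit of the Iwahori-Hecke algebra; strong transitivity together with the interpretation of $d_s$ in terms of chambers sharing the facet of type $s$ with a fixed chamber yields the coset decomposition $BsB = \bigsqcup_{i=1}^{d_s} x_i B$, hence $\mu(BsB) = d_s$, and inductively $\mu(BwB) = d_w := d_{s_1} \cdots d_{s_n}$ for any reduced expression $w = s_1 \cdots s_n$, the integer $d_w$ being well-defined by Tits' solution to the word problem.

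Next I would establish two convolution identities on $\CC[G,B]$: for every $s \in S$ and $w \in W$,
\begin{gather*}
  \mathbb{1}_{BsB} * \mathbb{1}_{BwB} = \mathbb{1}_{BswB} \text{ whenever } |sw| > |w|, \\
  \mathbb{1}_{BsB} * \mathbb{1}_{BsB} = d_s \mathbb{1}_B + (d_s - 1) \mathbb{1}_{BsB}.
\end{gather*}
The first identity rests on the $(B,N)$-pair axioms implicit in any strongly transitive type preserving action on a building, which yield the set-theoretic equality $BsB \cdot BwB = BswB$ when $|sw| > |w|$; matching the total mass $\mu(BsB)\mu(BwB) = d_s d_w = d_{sw} = \mu(BswB)$ then pins down the constant. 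The second identity comes from the rank-one fact $BsB \cdot BsB = B \cup BsB$ combined with $B$-biinvariance (which forces the product to be a linear combination $a\mathbb{1}_B + b\mathbb{1}_{BsB}$) and integrating both sides to find $a = d_s$ and $b = d_s - 1$. By standard unimodularity considerations and the identity $(BsB)^{-1} = BsB$, one further sees $(\mathbb{1}_{BsB})^* = \mathbb{1}_{BsB}$.

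Setting $\Phi(T_s^{(\qm)}) := -\sqrt{q_s}\, \mathbb{1}_{BsB}$ and using $q_s = d_s^{-1}$, the second convolution identity becomes the quadratic Hecke relation $(\Phi(T_s^{(\qm)}) - q_s^{1/2})(\Phi(T_s^{(\qm)}) + q_s^{-1/2}) = 0$, while iterating the first along any reduced expression $w = s_1 \cdots s_n$ yields $\Phi(T_{s_1}^{(\qm)}) \cdots \Phi(T_{s_n}^{(\qm)}) = (-1)^n \sqrt{q_w}\, \mathbb{1}_{BwB}$; independence of the right hand side from the chosen reduced expression encodes the braid relations. Combined with the selfadjointness of each $\mathbb{1}_{BsB}$, this shows that $\Phi$ extends to a well-defined $*$-homomorphism $\CC_\qm[W] \to \CC[G,B]$ sending $T_w^{(\qm)}$ to $(-1)^{|w|}\sqrt{q_w}\, \mathbb{1}_{BwB}$. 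Surjectivity follows since the basis $\{\mathbb{1}_{BwB}\}$ lies in the image, and injectivity from the fact that $\{T_w^{(\qm)}\}_{w \in W}$ is a basis of $\CC_\qm[W]$. The main obstacle is the set-theoretic equality $BsB \cdot BwB = BswB$ used in the first identity, which is the key non-trivial input from the Bruhat-Tits theory of $(B,N)$-pairs; once it is in hand, the scalar normalisations and sign conventions fall into line mechanically.
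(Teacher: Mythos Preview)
Your proof is correct and follows essentially the same strategy as the paper's: both establish the two convolution identities $\mathbb{1}_{BsB}*\mathbb{1}_{BwB}=\mathbb{1}_{BswB}$ for $|sw|>|w|$ and $\mathbb{1}_{BsB}^2=d_s\mathbb{1}_B+(d_s-1)\mathbb{1}_{BsB}$ (the paper quotes Garrett for these, while you sketch the measure argument), rescale to $-\sqrt{q_s}\,\mathbb{1}_{BsB}$ to match the Hecke quadratic relation, and deduce the braid relations from the first identity. The only cosmetic difference is that the paper argues bijectivity by writing down an inverse, whereas you use the fact that both sides have the basis indexed by $W$; and the paper phrases the $*$-compatibility via $\mathbb{1}_{BwB}^*=\mathbb{1}_{Bw^{-1}B}$ rather than selfadjointness of each $\mathbb{1}_{BsB}$.
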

\begin{proof}
  In \cite[Section 6.2]{garrett97}, it is shown that $\CC[G,B]$ is presented as the algebra with relations
  \begin{gather*}
    \CC[G,B] = \bigl \langle (\mathbb{1}_{BwB})_{w \in W} \mid \mathbb{1}_{BsB} \mathbb{1}_{BwB} = \mathbb{1}_{BswB} \text{ if } |sw| > |w| , \quad
    \mathbb{1}_{BsB}^2 = (d_s - 1)\mathbb{1}_{BsB} + d_s
    \bigr \rangle \eqstop
  \end{gather*}
  Using the first relation, we see that for $s \neq t$ in $S$ the Artin relation
  \begin{gather*}
    \mathbb{1}_{BsB} \mathbb{1}_{BtB} \dotsm  = \mathbb{1}_{BtB} \mathbb{1}_{BsB} \dotsm \qquad (m_{st} \text{ factors})
  \end{gather*}
  is satisfied, where $m_{st}$ denotes the order of $st$ in $W$. The second relation implies that
  \begin{gather*}
    \left ( \frac{-1}{\sqrt{d_s}} \mathbb{1}_{BsB} \right )^2 = \frac{d_s-1}{d_s} \mathbb{1}_{BsB} + 1 = - p_s \sqrt{q_s}\mathbb{1}_{BsB} + 1
    \eqcomma
  \end{gather*}
  which is equivalent to the Hecke relation of $\CC_\qm(W)$.  So there is an algebra homomorphism $\vphi: \CC_\qm(W) \to \CC[G,B]$ satisfying $\vphi(T_s^{(\qm)}) = (-1)\sqrt{q_s}\mathbb{1}_{BsB}$ for all $s \in S$.  It  satisfies $\vphi(T_w^{(\qm)}) = (-1)^{|w|}\sqrt{q_w}\mathbb{1}_{BwB}$ for all $w \in W$ by the definition of $T_w^{(\qm)}$ and the defining relations of $\CC[G,B]$.  An inverse for $\vphi$ exists since $(-1)^{|w|}\sqrt{d_w} T_w^{(\qm)}$ satisfy the relations characterising $\CC[G,B]$.  So $\vphi$ is an algebra isomorphism.  We conclude by pointing out that it is a *-isomorphism thanks to the fact that $(T_w^{(\qm)})^* = T_{w^{-1}}^{(\qm)}$ and $\mathbb{1}_{BwB}^* = \mathbb{1}_{Bw^{-1}B}$.
\end{proof}

\section{Factoriality}
\label{sec:factoriality}

In this section, we will prove Theorem \ref{thm:factoriality} and Corollary \ref{cor:spherical-representations}.  Our strategy to prove Theorem \ref{thm:factoriality} combines an improved version of Garncarek's main analytic tool presented in Section \ref{sec:analytic} with a novel combinatoric approach to inductively reduce the complexity of right-angled Coxeter groups, developed in Section \ref{sec:coxeter-graphs}.  In the single parameter case, it provides an alternative way to obtain the results of \cite{garncarek16}.  Finally, in Section \ref{sec:proofs}, we prove Theorem \ref{thm:factoriality} and Corollary \ref{cor:spherical-representations}.

\subsection{Analytic aspects: determining coefficients on double cosets}
\label{sec:analytic}

The proof of the following lemma is standard.  Its single parameter (Hilbert space) version already appeared as \cite[Lemma 5.1]{garncarek16}; note that the notion of central vectors we are using is slightly more general than that of \cite{garncarek16}, but again the proof follows by a straightforward computation. %
\begin{lemma}
  \label{lem:commutation-relation}
  Let $(W, S)$ be a right-angled Coxeter system and $\qm \in (0,1]^S$ and let $r \in [1,\infty)$. Fix $s \in S$ and $\xi \in \lp(W)$.  Then $\xi$ is central for $T_s^{(\qm)}$ if and only if for all $w \in W$ satisfying $|sws| = |w| + 2$ we have
  \begin{align*}
    \xi_{sw} & = \xi_{ws} \\
    \xi_{sws} & = \xi_{w} + p_s \xi_{sw}
                \eqstop
  \end{align*}
\end{lemma}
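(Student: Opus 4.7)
The plan is to verify the equivalence by direct coefficient comparison. My first step would be to expand the centrality condition $\pi_\qm^r(T_s^{(\qm)})\xi = \rho_\qm^r(T_s^{(\qm)})\xi$ at each basis vector $\delta_w$, using the action formulas for $\pi_\qm^r$ and $\rho_\qm^r$ recalled in Section~\ref{sec:generic-hecke-algebras}. This yields four distinct linear equations in the coefficients of $\xi$, indexed by the signs of $|sw|-|w|$ and $|ws|-|w|$. In the two cases where $w$ neither starts nor ends with $s$, or where $w$ both starts and ends with $s$, the equation simplifies to $\xi_{sw}=\xi_{ws}$; in the two mixed cases a $p_s\xi_w$ term appears on exactly one side.

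For the forward implication I would note that $|sws|=|w|+2$ is equivalent to $|sw|=|w|+1$ and $|(sw)s|=|sw|+1$, so $w$ does not start with $s$ and $sw$ does not end with $s$. A short argument then shows $w$ also does not end with $s$ (otherwise writing $w=w's$ reduced would yield $sw=sw's$ ending with $s$, a contradiction); thus $w$ lies in the first symmetric case above, and the centrality equation for $w$ reads $\xi_{sw}=\xi_{ws}$, giving the first lemma condition. Meanwhile $sw$ starts with $s$ but does not end with $s$, so the centrality equation for $sw$ rearranges to $\xi_w + p_s\xi_{sw} = \xi_{sws}$, giving the second lemma condition.

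For the reverse implication I would verify the four centrality equations one by one, each time applying the lemma conditions to a suitable orbit representative $w_0\in\{w,sw,ws,sws\}$ of $w$ under $\langle s\cdot,\cdot s\rangle$ with $|sw_0 s|=|w_0|+2$. Concretely, if $w$ starts and/or ends with $s$, one writes $w=sw'$, $w=w's$, or $w=sw's$ respectively; a direct length computation confirms that $w'$ does not start or end with $s$ and that $|sw's|=|w'|+2$, so that the lemma conditions apply to $w'$. A short algebraic manipulation (using both conditions together in the asymmetric mixed case where $w$ ends but does not start with $s$) then produces the centrality equation for $w$.

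The only delicate point, and the main obstacle, is the remaining subcase in which $w$ neither starts nor ends with $s$ yet $|sws|=|w|$, so that no orbit representative with $|sw_0 s|=|w_0|+2$ is available. This is where the right-angled hypothesis enters. Under it, $|sws|=|w|$ combined with $|sw|=|w|+1$ forces $sw$ to admit a reduced expression ending in $s$; by Tits' solution to the word problem in the right-angled setting, the leading $s$ in the reduced expression $s\cdot(\text{reduced word for }w)$ must commute past every letter of $w$, so that $s$ commutes with $w$ as group elements. Consequently $sw=ws$ and the centrality equation $\xi_{sw}=\xi_{ws}$ holds trivially, completing the argument.
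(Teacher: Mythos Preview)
Your approach is exactly the straightforward computation the paper has in mind (the paper gives no explicit argument, deferring to Garncarek), and your treatment of the delicate right-angled subcase is correct. There is one small oversight in the reverse implication: when $w$ both starts and ends with $s$, the claim that writing $w = sw's$ yields $|sw's| = |w'| + 2$ is not always true. Concretely, take $s,u \in S$ commuting and $w = su = us$; then $w$ starts and ends with $s$, yet $sws = w$, so $|sws| = |w|$ and no orbit representative $w_0$ with $|sw_0 s| = |w_0| + 2$ exists. This is a second instance of your ``obstacle'', now occurring for the shortest orbit element $w_0 = sw = ws$ rather than for $w$ itself: $w_0$ neither starts nor ends with $s$ and $|sw_0 s| = |w_0|$. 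Your right-angled argument applied to $w_0$ gives $sw_0 = w_0 s$, i.e.\ $w = sws$, hence $sw = ws$, and the Case~4 centrality equation $\xi_{sw} = \xi_{ws}$ holds trivially. With this one-line patch the proof is complete.
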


\setlength{\parindent}{0pt}

The key new result in the analytic part of our treatment of the multiparameter case compared to \cite{garncarek16} is the following generalisation of \cite[Proposition 5.2]{garncarek16}.
\begin{lemma}
  \label{lem:restriction-double-coset}
  Let $(W,S)$ be a right-angled Coxeter system, let $\qm \in (0,1]^S$ and let $r \in [1,\infty)$.  Let $s,t \in S$ be non-commuting elements and write $D = \langle s, t \rangle$.  Assume that $\xi \in \lp(W)$ is central for $T_s^{(\qm)}$ and $T_t^{(\qm)}$.  Then on every non-degenerate double coset $DwD \subset W$ with shortest element $w$, the following relations hold.
  \begin{itemize}
  \item If $q_s = q_t$, then 
    \begin{gather*}
      \xi_{d w d'}
      =
      \xi_w \cdot \left ( \frac{q_{dwd'}}{q_w} \right)^{\frac{1}{2}}
      \qquad
      \text{for all }
      d,d' \in D
      \eqstop
    \end{gather*}
  \item If $q_s < q_t$, then there is $c \in \CC$ such that
    \begin{gather*}
      \xi_{d w d'}
      =
      \xi_w \cdot \bigl (
      c  \left ( \frac{q_{dwd'}}{q_w} \right)^{\frac{1}{2}}
      + (1 - c) \left ( \frac{q_{dwd', \vepsm}}{q_{w,\vepsm}} \right)^{\frac{1}{2}}
      \bigr )
      \qquad
      \text{for all }
      d,d' \in D
      \eqcomma
    \end{gather*}
    for all $\vepsm \in \{-1,1\}^S$ satisfying $\veps_s = 1$ and $\veps_t = -1$.
  \item If $q_s> q_t$, then there is $c \in \CC$ such that
    \begin{gather*}
      \xi_{d w d'}
      =
      \xi_w \cdot \bigl (
      c  \left ( \frac{q_{dwd'}}{q_w} \right)^{\frac{1}{2}}
      + (1 - c) \left ( \frac{q_{dwd', \vepsm}}{q_{w,\vepsm}} \right)^{\frac{1}{2}}
      \bigr )
      \qquad
      \text{for all }
      d,d' \in D
      \eqcomma
    \end{gather*}
    for all $\vepsm \in \{-1,1\}^S$ satisfying $\veps_s = -1$ and $\veps_t = 1$.
  \end{itemize}
\end{lemma}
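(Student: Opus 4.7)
The plan is to reformulate the centrality conditions on $\xi|_{DwD}$ as tensor equations on $D\times D$ and solve them via the Hecke-eigenvector description of Proposition~\ref{prop:hecke-eigenvectors}. First I would observe that, since $w$ is the shortest element of the non-degenerate double coset $DwD$, no reduced expression of $w$ starts or ends with $s$ or $t$; hence $|dwd'| = |d|+|w|+|d'|$ for all $d, d'\in D$, the map $(d, d')\mapsto dwd'$ is a bijection $D\times D \to DwD$, and $q_{dwd'} = q_d q_w q_{d'}$ (and analogously for the $\vepsm$-twisted products). Setting $\tilde\xi(d, d') := \xi_{dwd'}$ and applying Lemma~\ref{lem:commutation-relation} to each $u = dwd'$ with $|sus|=|u|+2$ (respectively $|tut|=|u|+2$), the two identities there translate exactly into the tensor equations
\[
(\pi_s^D \otimes 1)\,\tilde\xi = (1\otimes \rho_s^D)\,\tilde\xi, \qquad (\pi_t^D \otimes 1)\,\tilde\xi = (1\otimes \rho_t^D)\,\tilde\xi,
\]
where $\pi^D$ and $\rho^D$ denote the left and right Hecke actions of the dihedral subalgebra on $\ell^r(D)$.

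Solutions come naturally from Hecke eigenvectors. For each $\vepsm\in\{-1,1\}^{\{s,t\}}$, Proposition~\ref{prop:hecke-eigenvectors} produces a formal series $\eta_\vepsm^D = \sum_{d\in D} q_{d,\vepsm}^{1/2}\,\delta_d$ which is simultaneously a left and right Hecke eigenvector with matching weights, so the pure tensor $\tilde\xi^\vepsm := \eta_\vepsm^D \otimes \eta_\vepsm^D$ satisfies the above equations. Unwinding $\tilde\xi^\vepsm(d, d') = q_{d,\vepsm}^{1/2} q_{d',\vepsm}^{1/2}$ and using $q_{dwd',\vepsm}/q_{w,\vepsm} = q_{d,\vepsm}\, q_{d',\vepsm}$ recovers the candidate formula of the lemma up to the scalar $\xi_w$. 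A direct summability analysis, using that $D$ has only two elements of each positive length and that these are alternating words in $s, t$, shows that $\tilde\xi^\vepsm \in \ell^r(D\times D)$ precisely when $|q_{s,\vepsm}|\cdot|q_{t,\vepsm}| < 1$. Under the standing assumption $q_s, q_t \in (0,1]$ this singles out only $\vepsm = (1,1)$ when $q_s = q_t$, the pair $\{(1,1),(1,-1)\}$ when $q_s < q_t$, and the pair $\{(1,1),(-1,1)\}$ when $q_s > q_t$, matching the case distinction of the lemma.

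The \emph{main obstacle} is the uniqueness step: showing that any $\ell^r$-solution $\tilde\xi$ of the tensor equations is a linear combination of the relevant $\tilde\xi^\vepsm$'s. My plan here is to fix $\xi_w = \tilde\xi(e,e)$ and propagate values across $D\times D$ using the recursions, organised by increasing $|d|+|d'|$. At each step the recursions determine new values up to scalar parameters attached to the ``frontier'' of the already-determined region; the key point is then to argue that any choice of these scalars incompatible with an admissible $\tilde\xi^\vepsm$ template forces a component of $\tilde\xi$ along some eigenvector direction $\eta_\vepsm^D$ with $|q_{s,\vepsm}|\cdot|q_{t,\vepsm}| \geq 1$, whose contribution to $\tilde\xi$ would blow up and contradict $\xi \in \ell^r(W)$. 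This collapses the solution space to the claimed one- or two-parameter family; normalising the combination so that the value at $(e,e)$ equals $\xi_w$ rewrites it as $\xi_w \bigl(c\,\tilde\xi^{(1,1)} + (1-c)\,\tilde\xi^\vepsm\bigr)$ with a free parameter $c$ in the unequal-parameter case, which is precisely the form appearing in the lemma.
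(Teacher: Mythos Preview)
Your opening move contains a genuine error. Non-degeneracy of $DwD$ does \emph{not} imply that neither $s$ nor $t$ commutes with $w$; it only guarantees that at least one of them does not. For example, with generators $s,t,u$ satisfying $[s,u]=e$, $[s,t]\neq e$, $[t,u]\neq e$ and $w=u$, the double coset $\langle s,t\rangle\, u\,\langle s,t\rangle$ is non-degenerate while $[s,w]=e$. In that situation your length-additivity claim $|dwd'|=|d|+|w|+|d'|$ fails (e.g.\ $|sws|=|w|$), and the map $(d,d')\mapsto dwd'$ is not injective. More importantly, your tensor equation $(\pi_s^D\otimes 1)\tilde\xi=(1\otimes\rho_s^D)\tilde\xi$ is then simply false: evaluating at $(s,e)$ gives $\tilde\xi(e,e)+p_s\tilde\xi(s,e)=\tilde\xi(s,s)$, i.e.\ $\xi_w+p_s\xi_{sw}=\xi_{sws}=\xi_w$, forcing $p_s\xi_{sw}=0$, which is not a consequence of centrality. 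The paper's proof deals with exactly this asymmetry: it assumes without loss of generality that $t$ does not commute with $w$, but then has to restrict the $s$-relations \eqref{eq:3}--\eqref{eq:4} to special $d,d'$ and recover the missing information by an ad hoc computation of $f(s,e)$ from the recurrence for $h_1,h_2$.

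Even on the ``good'' part of the double coset your uniqueness sketch is too thin. Saying that an incompatible choice of scalars ``forces a component along some $\eta_\vepsm^D$ with $|q_{s,\vepsm}|\cdot|q_{t,\vepsm}|\geq 1$'' presupposes that the four tensors $\eta_\vepsm^D\otimes\eta_\vepsm^D$ already span the formal solution space of your two tensor equations, which you have not shown. The paper avoids this by collapsing the two-dimensional problem to a one-dimensional one: using the symmetry Claims~\ref{claim:1}--\ref{claim:3} it reduces everything to the values $g_1(n)=f((ts)^n,e)$ and $g_2(n)=f((ts)^nt,e)$, derives an explicit linear recurrence with constant coefficients, writes down its four fundamental solutions, and then uses $\xi\in\ell^r$ to kill the growing ones. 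That step is where the actual content lies, and your proposal does not replace it.
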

\begin{proof}
	For simplicity we shall write $q_{v,1,-1}$ for $q_{v, \vepsm}$ whenever $v \in W$ and $\vepsm \in \{-1,1\}^S$ is such that $\veps_s = 1$ and $\veps_t = -1$, and similarly $q_{v,-1,1}$ for $q_{v, \vepsm}$ whenever $v \in W$ and $\vepsm \in \{-1,1\}^S$ is such that $\veps_s = -1$ and $\veps_t = 1$. 
	
  Since $DwD$ is non-degenerate, at least one of the elements $s,t$ does not commute with $w$.  We may without loss of generality assume that this is $t$.   Define a function $f: D \times D \to \CC$ by $f(d, d') = \xi_{d^{-1}wd'}$.  Then we have to find $a,b,c \in \CC$ satisfying $a + b + c = f(e,e)$ such that: $b = 0$ if $q_s \leq q_t$, $c = 0$ if $q_s \geq q_t$, and such that
  \begin{gather*}
    f(d,d')
    =
    a \left ( \frac{q_{d^{-1}wd'}}{q_w} \right)^{\frac{1}{2}}
    + b \left (\frac{q_{d^{-1}wd', -1,1}}{q_{w,-1,1}} \right)^{\frac{1}{2}}
    + c \left (\frac{q_{d^{-1}wd', 1,-1}}{q_{w,1,-1}} \right)^{\frac{1}{2}}
  \end{gather*}
  holds for all $d,d' \in D$.
  
  We will first collect several algebraic relations between values of $f$, obtained by applying Lemma \ref{lem:commutation-relation}.  Given $d,d' \in D$ that do not end in $t$ we have $|td^{-1}wd't| = |d^{-1}wd'| +2$, whence
  \begin{align}
    f(d, d't) & = f(dt, d') \label{eq:1}  \\
    f(d,d') & =  f(dt, d't) - p_t f(dt, d') \label{eq:2}
              \eqstop
  \end{align}
  If $s$ does not commute with $w$, we similarly find that for all $d,d' \in D$ that do not end in $s$, we have
  \begin{align}
    f(d, d's) & = f(ds, d')  \label{eq:3}  \\
    f(d,d') & = f(ds, d's)  - p_s f(ds, d') \label{eq:4}
                             \eqstop
  \end{align}
  If $s$ does commute with $w$, these statements do hold for $d,d' \in D$ that end in $t$ and additionally for the choice $d = e$ and $d' \in D$ that does not start or end with $s$, as in these cases $|sd^{-1}wd's| = |d^{-1}wd'| +2$ continues to hold.

  \begin{claim}
    \label{claim:1}
    The equality $f(d, d'ts) = f(dts, d')$ holds for all $d, d' \in D$ that do not end with $t$.
  \end{claim}
  \begin{proof}[Proof of the claim]
  We fix a pair of elements $d,d' \in D$ that do not end in $t$.  Write $d_n = d tstst \dotsm$ with $n$ letters attached, and similarly write $d'_n = d' tstst \dotsm$ with $n$ letters attached.  We can apply the relations \eqref{eq:2} and \eqref{eq:4} to obtain
  \begin{align*}
    f(d,d')
    & = f(dt,d't) - p_t f(dt, d') \\
    & = f(dts, d'ts) - p_s f(dts, d't) - p_t f(dt,d') \\
    & = f(d_{2n}, d'_{2n}) - \sum_{k = 0}^{n -1} p_s f(d_{2k}ts, d'_{2k}t) + p_tf(d_{2k}t, d'_{2k})
      \eqstop
  \end{align*}
  Since $f$ vanishes at infinity, we infer that the partial sums $\sum_{k = 0}^{n -1}  p_s f(d_{2k}ts, d'_{2k}t) + p_tf(d_{2k}t, d'_{2k})$ converge and we obtain the expression
  \begin{gather*}
    f(d,d')
    =
    - \sum_{k = 0}^\infty p_s f(d_{2k}ts, d'_{2k}t) + p_tf(d_{2k}t, d'_{2k})
    \eqstop
  \end{gather*}
  In order to prove our claim, it now suffices to see that $f(d(ts)^{k+1}t, d'(ts)^k) = f(d(ts)^kt, d'(ts)^{k+1})$ and $f(d(ts)^{k+2}, d'(ts)^kt) = f(d(ts)^{k+1}, d'(ts)^{k+1}t)$ hold for all $k \in \NN$.  These relations follow from equations \eqref{eq:1} and \eqref{eq:3}.
  \end{proof}

  \begin{claim}
    \label{claim:2}
    If $s$ and $w$ do not commute, then $f(d, d'st) = f(dst, d')$ holds for all $d, d' \in D$ that do not end with $s$.  If $s$ and $w$ commute, then $f(d, d'st) = f(dst, d')$ holds for all $d, d' \in D$ that end with $t$ as well as for $d = e$ under the additional condition that $d' \in W$ neither starts nor ends with $s$.
  \end{claim}
  \begin{proof}[Proof of the claim]
    The proof of this claim follows exactly the same way as the previous one, invoking equations \eqref{eq:2} and \eqref{eq:4} while respecting the special case where $s$ and $w$ commute.
  \end{proof}

  Let us define two functions $g_1,g_2: \NN \to \CC$ by the formulas
  \begin{align*}
    g_1(n) & = f((ts)^n, e) \\
    g_2(n) & = f((ts)^nt, e)
             \eqstop
  \end{align*}
  Then
  \begin{gather}
    \label{eq:rec1}
    g_1(n + 2) = g_1(n) + p_s g_2(n + 1) + p_t g_2(n) \qquad \text{for } n \in \NN \eqcomma
  \end{gather}
  since an application of equations \eqref{eq:2} and \eqref{eq:4}, and then of Claim \ref{claim:1} and equation \eqref{eq:1} shows that
  \begin{align*}
    f((ts)^n, e)
    & =
      f((ts)^nt,t) - p_tf((ts)^nt, e) \\
    & =
      f((ts)^nts,ts) - p_s f((ts)^nts,t) - p_t f((ts)^nt, e) \\
    & =
      f((ts)^{n+2},e) - p_s f((ts)^{n+1}t,e) -  p_tf((ts)^nt, e)
      \eqstop
  \end{align*}
  Similarly, the recurrence
  \begin{gather}
    \label{eq:rec2}
    g_2(n + 2) = g_2(n) + p_t g_1(n + 2) + p_s g_1(n + 1) \qquad \text{for } n \in \NN
  \end{gather}  
  follows from the calculation
  \begin{align*}
    f((ts)^nt, e)
    & =
      f((ts)^nts, s) - p_s f((ts)^nts, e) \\
    & =
      f((ts)^ntst, st) - p_t f((ts)^ntst, s) - p_s f((ts)^nts, e) \\
    & =
      f((ts)^{n+2}t, e) - p_t f((ts)^{n+2}, e) - p_s f((ts)^{n+1}, e)
      \eqstop
  \end{align*}
  The system of recurrence relations \eqref{eq:rec1}--\eqref{eq:rec2} associated with $g_1, g_2$ can be transformed into a linear system of four relations  with constant coefficients.  It follows that it has a system of four fundamental solutions.  Considering the solutions to the difference equation appearing in \cite[Proposition~5.2]{garncarek16} (and correcting the typo in there), one guesses the following solutions and verifies in elementary calculations that they indeed obey the recurrences.
  \begin{align*}
    \tilde g_1(n) & = q_s^{\frac{n}{2}} q_t^{\frac{n}{2}} \tag{First fundamental solution}\\
    \tilde g_2(n) & = q_s^{\frac{n}{2}} q_t^{\frac{n+1}{2}} \\[1em]
    \tilde g_1'(n) & = (-1)^n q_s^{-\frac{n}{2}} q_t^{\frac{n}{2}} \tag{Second fundamental solution}\\
    \tilde g_2'(n) & = (-1)^n q_s^{-\frac{n}{2}} q_t^{\frac{n+1}{2}} \\[1em]
    \tilde g_1''(n) & = (-1)^n q_s^{\frac{n}{2}} q_t^{-\frac{n}{2}} \tag{Third fundamental solution} \\
    \tilde g_2''(n) & = (-1)^{n+1} q_s^{\frac{n}{2}} q_t^{-\frac{n+1}{2}} \\[1em]
    \tilde g_1'''(n) & = q_s^{-\frac{n}{2}} q_t^{-\frac{n}{2}} \tag{Fourth fundamental solution}\\
    \tilde g_2'''(n) & = - q_s^{- \frac{n}{2}} q_t^{-\frac{n+1}{2}} \eqstop
  \end{align*}
  We exemplify the necessary calculations by checking equation \eqref{eq:rec1} for the first fundamental solution $(\tilde g_1, \tilde g_2)$. Dividing both sides of \eqref{eq:rec1} by $\tilde g_1(n)$, it remains to match the term $q_sq_t$ with 
  \begin{gather*}
    1 + p_s q_s^{\frac{1}{2}} q_t + p_t q_t^{\frac{1}{2}}
    =
    1 + (q_s - 1)q_t + (q_t - 1)
    =
    q_sq_t
    \eqstop
  \end{gather*}
  So the equation is indeed satisfied.
  
  Because $q_s, q_t \in (0,1]$, it follows that any solution to the recurrence relation that vanishes at infinity must be a linear combination of the first three solutions.  So we find $a,b,c \in \CC$ such that
  \begin{gather}
    \label{eq:fts}
    f((ts)^n, e) = g_1(n) = a q_s^{\frac{n}{2}} q_t^{\frac{n}{2}} + b (-1)^n q_s^{-\frac{n}{2}} q_t^{\frac{n}{2}} + c (-1)^n q_s^{\frac{n}{2}} q_t^{-\frac{n}{2}}
  \end{gather}
  and similarly
  \begin{gather*}
    f((ts)^nt, e) = g_2(n) =  a q_s^{\frac{n}{2}} q_t^{\frac{n + 1}{2}} + b (-1)^n q_s^{-\frac{n}{2}} q_t^{\frac{n + 1}{2}} + c (-1)^{n + 1} q_s^{\frac{n}{2}} q_t^{-\frac{n + 1}{2}}
    \eqstop
  \end{gather*}
  We note that $a + b + c = g_1(0) = f(e,e)$ and that $q_s < q_t$ implies $b = 0$ and $q_s > q_t$ implies $c = 0$, once again due to the behaviour when $n$ tends to infinity. When $q_s=q_t$ we deduce first from \eqref{eq:fts} that $b+c=0$ and then from the following equality that $b-c=0$.
  
  We will next investigate the pairs $((st)^n, e)$ and $((st)^n s, e)$ in a similar fashion, carefully taking care of possible commutation between $w$ and $s$.  We start with the following observation.
  \begin{claim}
    \label{claim:3}
    We have
    \begin{gather*}
      f((st)^n, e) = f((ts)^n, e)
    \end{gather*}
    for all $n \in \NN$. 
  \end{claim}
  \begin{proof}[Proof of the claim]
    The claim is clear for $n = 0$.  Inductively applying equations \eqref{eq:1} and \eqref{eq:3} combined with Claim \ref{claim:1} we find that for $n \geq 1$
    \begin{align*}
      f((st)^n,e) =
      & =
        f((st)^{n-1}s,t) \\
      & =
        f((st)^{n-1},ts) \\
      & =
        f(e,(ts)^n) \\
      & =
        f(ts,(ts)^{n-1}) \\
      & =
        f((ts)^n,e)
        \eqstop
    \end{align*}
  \end{proof}

  Let us define functions $h_1, h_2: \NN \to \CC$ by 
  \begin{align*}
    h_1(n) & = f((st)^n, e) \\
    h_2(n) & = f((st)^ns, e)
             \eqstop
  \end{align*}
  If $s$ and $w$ do not commute, then we can derive and solve recurrence relations as before and thereby find $a',b',c' \in \CC$ such that 
  \begin{align*}
    f((st)^n, e)
    & =
      a' q_s^{\frac{n}{2}} q_t^{\frac{n}{2}}
      + b' (-1)^n q_s^{-\frac{n}{2}} q_t^{\frac{n}{2}}
      + c' (-1)^n q_s^{\frac{n}{2}} q_t^{-\frac{n}{2}} \\
    f((st)^ns, e)
    & =
      a' q_s^{\frac{n}{2}} q_t^{\frac{n + 1}{2}}
      + b' (-1)^{n+1} q_s^{-\frac{n + 1}{2}} q_t^{\frac{n}{2}}
      + c' (-1)^n q_s^{\frac{n+1}{2}} q_t^{-\frac{n}{2}}
    \eqstop
  \end{align*}
  holds for all $n \in \NN$.  By Claim \ref{claim:3} it follows that $a' = a$, $b' = b$ and $c' = c$.  Indeed, if $q_s = q_t$, then $b= b' = c = c' = 0$.  Otherwise parameters can be matched by comparing values for $n \in \{0,1,2\}$.

  If $s$ and $w$ commute, we only obtain the relations (analogous to \eqref{eq:rec1} and \eqref{eq:rec2})
  \begin{align*}
    h_1(n + 2) & = h_1(n) + p_t h_2(n + 1) + p_s h_2(n) && \text{ for } n \in \NN \\
    h_2(n + 2) & = h_2(n) + p_s h_1(n + 2) + p_t h_1(n + 1) && \text{ for } n \in \NN_{\geq 1}
                 \eqstop
  \end{align*}
  The same method as before applies, showing that there are scalars $a'',b'',c'' \in \CC$ such that for $n \in \NN_{\geq 1}$ we have
  \begin{align*}
    h_1(n)
    & =
      a'' q_s^{\frac{n}{2}} q_t^{\frac{n}{2}}
      + b'' (-1)^n q_s^{-\frac{n}{2}} q_t^{\frac{n}{2}}
      + c'' (-1)^n q_s^{\frac{n}{2}} q_t^{-\frac{n}{2}} \\
    h_2(n)
    & =
      a'' q_s^{\frac{n}{2}} q_t^{\frac{n + 1}{2}}
      + b'' (-1)^n q_s^{-\frac{n}{2}} q_t^{\frac{n+1}{2}}
      + c'' (-1)^{n+1} q_s^{\frac{n}{2}} q_t^{-\frac{n+1}{2}}
             \eqstop
  \end{align*}
  Thanks to Claim \ref{claim:3}, we have $h_1(n) = g_1(n)$ for all $n \in \NN$.  In particular, the parameters $a'',b'',c''$ are determined by $a,b,c$ and a calculation shows that
  \begin{gather*}
    f((st)^ns,e)
    =
      a q_s^{\frac{n+1}{2}} q_t^{\frac{n}{2}}
      + b (-1)^{n+1} q_s^{-\frac{n+1}{2}} q_t^{\frac{n}{2}}
      + c (-1)^n q_s^{\frac{n+1}{2}} q_t^{-\frac{n}{2}}
      \qquad \text{for } n \in \NN_{\geq 1} \eqstop
  \end{gather*}

  It remains to find the value of $h_2(0) = f(s,e)$.  Plugging $n=0$ into the first recurrence relation for $h_1$ and $h_2$ and substituting $h_1(0)=f(e,e) = a+b+c$, we obtain
  \begin{gather*}
    a q_s q_t + b q_s^{-1}q_t + c q_s q_t^{-1}
    =
   a+b+c + p_t(a q_s q_t^{\frac{1}{2}} + b q_s^{-1} q_t^{\frac{1}{2}} - c q_s q_t^{-\frac{1}{2}}) + p_s f(s,e)
  \end{gather*}
  and thus
    \begin{gather*}
    f(s,e)
    =
    p_s^{-1} \bigl ( a (q_s q_t - 1 - p_t q_s q_t^{\frac{1}{2}})
    + b(q_s^{-1} q_t - 1 - p_t q_s^{-1} q_t^{\frac{1}{2}})
    + c(q_sq_t^{-1} - 1 + p_t q_sq_t^{-\frac{1}{2}}) \bigr)
    \eqstop
  \end{gather*}
  Simplifying each term on the right-hand side of the equation, we find
  \begin{gather*}
    f(s,e) = (a + c)q_s^{\frac{1}{2}} - b q_s^{-\frac{1}{2}}
    \eqstop
  \end{gather*}

  Summarising our results so far, we have found $a,b,c \in \CC$ such that
  \begin{align*}
    f(d, e)
    & =
      a q_{d}^{\frac{1}{2}} + b q_{d, -1, 1}^{\frac{1}{2}} + c q_{d, 1, -1}^{\frac{1}{2}} \\
    & =
      a \left ( \frac{q_{d^{-1}w}}{q_w} \right)^{\frac{1}{2}}
      + b \left (\frac{q_{d^{-1}w, -1,1}}{q_{w,-1,1}} \right)^{\frac{1}{2}}
      + c \left (\frac{q_{d^{-1}w, 1,-1}}{q_{w,1,-1}} \right)^{\frac{1}{2}}
  \end{align*}
  holds for all $d \in D$.  In the remainder of the proof, we will reduce all considerations about the values of $f$ to these cases.

  Since the transformations appearing in relations \eqref{eq:1} and \eqref{eq:3} as well as in Claims \ref{claim:1} and \ref{claim:2} do not affect the value of $q_{d^{-1}wd', \vepsm}$, we see that it remains to determine the values of $f$ on the following pairs of elements: 
  \begin{gather*}
    \begin{array}{llcll}
      ((st)^ns,s) & \text{for } n \in \NN \eqcomma && ((ts)^nt, t) & \text{for } n \in \NN, \\
      ((st)^n, t) & \text{for } n \in \NN_{\geq 1} \eqcomma && ((ts)^n, s) & \text{for } n \in \NN_{\geq 1}.
    \end{array}
  \end{gather*}
  Treating the exception first, we observe that if $s$ and $w$ commute, then $((st)^ns)^{-1}ws = (st)^ns w s  = (st)^n w$ and hence $f((st)^ns, s) = f((st)^n, e)$.  If $s$ and $w$ do not commute, we obtain
  \begin{align*}
    f((st)^ns, s)
    & = f((st)^n, e) + p_sf((st)^ns,e)
    \eqcomma
  \end{align*}
  and a calculation as before shows that also $f((st)^ns,s)$ satisfies the first equality stated in the lemma.  Similar calculations determine the value of $f$ on the pairs $((st)^n, t)$, $n \in \NN_{\geq 1}$ as well as $((ts)^nt,t)$, $n \in \NN$ and $((ts)^n, s)$, $n \in \NN_{\geq 1}$, thereby finishing the proof of the lemma.
\end{proof}

\setlength{\parindent}{1em}

\begin{remark}
  \label{rem:earlier-version-analytic}
   One should note that the `non-uniqueness' in the multiparameter case, namely a potential appearance of the linear combination in the above lemma, makes it far more difficult to combine the information obtained for different cosets; and it is precisely this point which neccessitates the intricate combinatorial argument given in the next subsection.
\end{remark}

\subsection{Combinatorial aspects: induction on Coxeter graphs}
\label{sec:coxeter-graphs}

In this section we will apply the result on coefficients on double cosets obtained in Section \ref{sec:analytic} in order prove Theorem \ref{thm:factoriality} inductively on the number of generators of the Coxeter system.  The key insight is that the statement of Theorem \ref{thm:hecke-central-vectors-hecke-ev} holds true for all right-angled Coxeter groups of rank one and of rank at least three, so that only dihedral groups $\ZZ/2 * \ZZ/2$ have to be avoided.  The underlying structure of the induction is then best described in terms of the Coxeter graph.  Given a right-angled Coxeter system $(W,S)$ we mean by its \emph{Coxeter graph} the unlabelled Coxeter diagram.  Concretely, the Coxeter graph is the simplicial graph with vertex set $S$ and edge set $\{\{s,t\} \mid [s,t] \neq e\}$.  The proof of Theorem \ref{thm:hecke-central-vectors-hecke-ev} chooses vertices and edges that can be removed from a Coxeter graph in order to appeal to an induction hypothesis.  Which vertices can be removed while keeping control over Hecke central vectors is depending on the chosen multiparameter $\qm \in (0,1]^S$.  %
The order of parameters allows to identify suitable vertices and edges to be removed from the Coxeter graph.  The necessary lemmas formalising this idea are presented in Section \ref{sec:reduction}.  Before, the induction base is treated in Section \ref{sec:base-cases}.  Then proof Theorem of \ref{thm:factoriality} will be presented in Section \ref{sec:proofs}.

\begin{remark}
  \label{rem:earlier-version-combinatorial}
  Note that in view of Remark \ref{rem:earlier-version-analytic}, compared to a single-parameter case a new strategy of proof had to be applied for Theorem \ref{thm:factoriality}, which replaces the use of Garncarek's combinatorial part \cite[Proposition 3.4]{garncarek16}.  Our strategy is novel even in the single parameter case treated by Garncarek, and provides an alternative -- though longer -- proof for this case.
\end{remark}

We first collect several short observations that will be used repeatedly in Sections \ref{sec:base-cases} and \ref{sec:reduction}.
\begin{lemma}
  \label{lem:adding-letters}
  Let $(W,S)$ be right-angled Coxeter system, let $\qm \in (0,1]^S$ and $\vepsm \in \{-1,1\}^S$.  Denote by $\eta_\veps$ the associated Hecke eigenvector, considered as a formal series.  If $w \in W$ and $s \in S$ satisfy $|sws| = |w| + 2$, then the following formulae hold.
    \begin{align*}
      (\eta_\vepsm)_{sws} & = (\eta_\vepsm)_w + p_s (\eta_\vepsm)_{ws} \quad \text{and}\\
      (\eta_\vepsm)_{sw} & = (\eta_\vepsm)_{ws}
                          \eqstop
    \end{align*}
    If $w \in W$ satisfies $|sw| > |w|$ and $\veps_s = 1$ holds, then
    \begin{gather*}
      (\eta_\vepsm)_{sw} = q_s^{\frac{1}{2}}(\eta_\vepsm)_w
      \eqstop
    \end{gather*}
\end{lemma}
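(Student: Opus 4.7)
All three identities are coefficient-wise statements about the formal series $\eta_\vepsm = \sum_{w} q_{w,\vepsm}^{\frac{1}{2}} \delta_w$, so the approach is a direct verification using the definition $q_{w,\vepsm} = q_{s_1,\vepsm} \dotsm q_{s_n,\vepsm}$ for any reduced expression $w = s_1 \dotsm s_n$. This product is unambiguous in the right-angled case because distinct reduced expressions differ only by commutations of generators and the scalars $q_{s_i,\vepsm} \in \RR$ of course commute. Throughout I will use that the sign convention $x^{\frac{1}{2}} = \mathrm{sgn}(x)|x|^{\frac{1}{2}}$ satisfies the product rule $(xy)^{\frac{1}{2}} = x^{\frac{1}{2}} y^{\frac{1}{2}}$.

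First I would dispatch the second bullet. The hypothesis $|sw|>|w|$ means $s \cdot w$ is a reduced expression for $sw$, so $q_{sw,\vepsm} = q_{s,\vepsm} q_{w,\vepsm} = q_s q_{w,\vepsm}$ since $\veps_s = 1$ gives $q_{s,\vepsm}= q_s$. Taking square roots yields $q_{sw,\vepsm}^{\frac{1}{2}} = q_s^{\frac{1}{2}} q_{w,\vepsm}^{\frac{1}{2}}$, which is the required formula.

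Next, for the first bullet, the assumption $|sws| = |w| + 2$ ensures that both $s \cdot w$ and $w \cdot s$ (and $s \cdot w \cdot s$) are reduced. Commutativity of real numbers then gives
\[
  q_{sw,\vepsm} = q_{s,\vepsm}q_{w,\vepsm} = q_{w,\vepsm}q_{s,\vepsm} = q_{ws,\vepsm},
\]
from which $(\eta_\vepsm)_{sw} = (\eta_\vepsm)_{ws}$ is immediate. For the relation $(\eta_\vepsm)_{sws} = (\eta_\vepsm)_w + p_s (\eta_\vepsm)_{ws}$, I compute $q_{sws,\vepsm} = q_{s,\vepsm}^2 q_{w,\vepsm}$, whose signed square root is $q_s^{\veps_s} q_{w,\vepsm}^{\frac{1}{2}}$ (the factor $\veps_s^2 = 1$ absorbs the sign of $q_{s,\vepsm}^2$). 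It therefore suffices to verify the scalar identity
\[
  q_s^{\veps_s} \;=\; 1 + p_s \, \veps_s q_s^{\frac{\veps_s}{2}},
\]
which is a direct reformulation of the Hecke relation $\veps_s q_s^{-\frac{\veps_s}{2}} + p_s = \veps_s q_s^{\frac{\veps_s}{2}}$ already established in the proof of Proposition~2.4 (and easily checked in the two cases $\veps_s = \pm 1$). Multiplying this identity by $q_{w,\vepsm}^{\frac{1}{2}}$ and recalling that $p_s q_{ws,\vepsm}^{\frac{1}{2}} = p_s \veps_s q_s^{\frac{\veps_s}{2}} q_{w,\vepsm}^{\frac{1}{2}}$ finishes the calculation.

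I do not anticipate a substantial obstacle: the entire lemma is a routine bookkeeping exercise once one is careful about the sign convention when $\veps_s = -1$. The only subtlety worth highlighting is that the product rule for signed square roots is what allows one to freely pass between $q_{w,\vepsm}^{\frac{1}{2}}$ and the product $\prod_i q_{s_i,\vepsm}^{\frac{1}{2}} = \prod_i \veps_{s_i} q_{s_i}^{\frac{\veps_{s_i}}{2}}$; this consistency is essential both for the well-definedness of the formal series and for the algebraic manipulations above.
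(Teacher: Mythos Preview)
Your proof is correct and follows essentially the same route as the paper: both reduce the first identity to the scalar equation $q_s^{\veps_s} = 1 + p_s\,\veps_s q_s^{\veps_s/2}$ (equivalently $q_{s,\vepsm} = 1 + \veps_s q_s^{\veps_s/2} p_s$) and verify it by distinguishing $\veps_s = \pm 1$, with the remaining two identities being immediate from reducedness. If anything you are slightly more thorough, since the paper only spells out the first identity and leaves the other two implicit, and your explicit appeal to the multiplicativity of the signed square root makes the sign bookkeeping cleaner.
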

\begin{proof}
  We only show the first identity.  Recall the definition $\eta_\vepsm = \sum_{w \in W} q_{w,\vepsm}^{\frac{1}{2}} \delta_w$.  So $(\eta_\vepsm)_{sws} = \veps_s q_{s,\vepsm} (\eta_\vepsm)_w$ and $(\eta_\vepsm)_{ws} = \veps_s q_s^{\veps_s \frac{1}{2}} (\eta_\vepsm)_w$ (recall that we are using the convention $q_{s, \veps}^{\frac{1}{2}} := \veps_s |q_{s,\veps_s}|^{\frac{1}{2}}$).  It hence suffices to show that $\veps_s q_{s,\vepsm} = 1 + \veps_s q_s^{\veps_s \frac{1}{2}} p_s$.  This identify follows directly when considering the values of $\veps_s$ separately, as was done in the proof of Proposition \ref{prop:hecke-eigenvectors}.  If $\veps_s = 1$, then
  \begin{gather*}
    1 + q_s^{\frac{1}{2}} p_s = 1 + q_s - 1 = q_{s , \vepsm}
    \eqstop
  \end{gather*}
  If $\veps_s = -1$, then
  \begin{gather*}
    1 - q_s^{-\frac{1}{2}} p_s = \frac{q_s}{q_s} - \frac{q_s - 1}{q_s} = \frac{1}{q_s} = - q_{s, \vepsm}
    \eqstop
  \end{gather*}
\end{proof}

The following lemma ensures the existence of Hecke eigenvectors in $\lp(W)$, which will allow us to deduce restrictions on the multiparameter.
\begin{lemma}
  \label{lem:hecke-ev-exists}
  Let $(W,S)$ be a right-angled Coxeter system.  For $\vepsm \in \{-1,1\}^S$ let $\eta_\vepsm$ be the associated Hecke eigenvector, considered a priori as a formal series.  Let $r \in [1,\infty)$ and let $c_\vepsm$, $\vepsm \in \{-1,1\}^S$ be complex coefficients such that $\sum_\vepsm c_\vepsm \eta_\vepsm \in \lp(W)$.  If $c_\vepsm \neq 0$ then $\eta_\vepsm  \in \lp(W)$.
\end{lemma}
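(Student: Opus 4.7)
The plan is an induction on the size of the finite support $F = \{\vepsm : c_\vepsm \neq 0\}$ (finiteness is forced anyway if the formal sum is to give a well-defined function on $W$). The base case $|F|=1$ is immediate; for $|F|\geq 2$, I pick any $s \in S$ on which two members of $F$ differ and split $\xi := \sum_\vepsm c_\vepsm \eta_\vepsm = \xi_s^+ + \xi_s^-$ with $\xi_s^\pm := \sum_{\vepsm:\,\veps_s=\pm 1} c_\vepsm \eta_\vepsm$. Both summands have strictly smaller support, so the inductive step reduces to showing $\xi_s^\pm \in \lp(W)$.

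The argument for that is a small piece of linear algebra based on the Hecke eigenvector relation
\begin{gather*}
  (\eta_\vepsm)_{sw} = \veps_s q_s^{\veps_s/2}(\eta_\vepsm)_w \qquad \text{whenever } |sw|>|w| \eqcomma
\end{gather*}
which is a direct consequence of the multiplicativity of $q_{\cdot,\vepsm}^{1/2}$ on a reduced factorisation (as exploited in the proof of Proposition~\ref{prop:hecke-eigenvectors}). Summing this identity against $c_\vepsm$ and combining it with the tautology $\xi = \xi_s^+ + \xi_s^-$ yields, on $W_s^+ := \{w \in W : |sw|>|w|\}$, the pair of relations
\begin{align*}
  \xi(w) &= \xi_s^+(w) + \xi_s^-(w) \eqcomma \\
  \xi(sw) &= q_s^{1/2}\xi_s^+(w) - q_s^{-1/2}\xi_s^-(w) \eqstop
\end{align*}
The coefficient matrix has non-vanishing determinant $-(q_s^{1/2}+q_s^{-1/2})$, so I can solve for $\xi_s^\pm(w)$ as an explicit linear combination of $\xi(w)$ and $\xi(sw)$. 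Since $w \mapsto sw$ is a bijection between $W_s^+$ and $W_s^- := W \setminus W_s^+$ and $\xi \in \lp(W)$, the triangle inequality then yields $\xi_s^\pm|_{W_s^+} \in \lp(W_s^+)$.

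To handle the restriction to $W_s^-$, I would apply the same eigenvector relation termwise to the formal series $\xi_s^\pm$ themselves, obtaining $\xi_s^\pm(sw) = \pm q_s^{\pm 1/2}\xi_s^\pm(w)$ for $w \in W_s^+$. Thus $\xi_s^\pm|_{W_s^-}$ is just a scalar rescaling of a reindexing of $\xi_s^\pm|_{W_s^+}$, which is already $\lp$, so $\xi_s^\pm \in \lp(W)$ as required, and the induction closes. The only conceptual subtlety is that the split of $\xi$ must be taken along a single generator $s$: it aligns with the two eigenvalues $\pm q_s^{\pm 1/2}$ of $T_s^{(\qm)}$, which is exactly what makes the $2\times 2$ system invertible, while arbitrary linear partitions of the defining sum of $\xi$ need not preserve the $\lp$ property.
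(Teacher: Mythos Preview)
Your proof is correct. The underlying idea is the same as the paper's---separating the $\eta_\vepsm$ by exploiting that they are eigenvectors with distinct eigenvalues for each $T_s^{(\qm)}$---but the execution differs. The paper applies the bounded operator $\prod_{s \in S}(T_s^{(\qm)} + \veps_s q_s^{-\veps_s/2})$ to $\xi$ in one shot: this annihilates every $\eta_{\vepsm'}$ with $\vepsm' \neq \vepsm$ and returns a nonzero multiple of $c_\vepsm \eta_\vepsm$, which therefore lies in $\lp(W)$. Your version peels off one coordinate at a time by induction, working directly with the coefficient recursion rather than invoking operator boundedness. Your $2\times 2$ system is exactly the coordinate-level content of the decomposition $\xi = \frac{1}{q_s^{1/2}+q_s^{-1/2}}\bigl[(T_s^{(\qm)}+q_s^{-1/2})\xi + (q_s^{1/2}-T_s^{(\qm)})\xi\bigr]$, so the two arguments are equivalent in spirit. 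The paper's route is shorter and conceptually cleaner once the boundedness of $T_s^{(\qm)}$ on $\lp(W)$ is in hand; your route is more self-contained in that it never appeals to that boundedness, only to the explicit form of the coefficients of $\eta_\vepsm$.
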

\begin{proof}
  Recall from Proposition \ref{prop:hecke-eigenvectors} that the weight of $\eta_\vepsm$ is given by
  \begin{gather*}
    T_s^{(\qm)} \eta_\vepsm = \veps_s q_s^{\veps_s \frac{1}{2}} \eta_\vepsm, \;\;\; s \in S
    \eqstop
  \end{gather*}
  This implies that
  \begin{gather*}
    (T_s^{(\qm)} + q_s^{-\frac{1}{2}}) \eta_\vepsm =
    \begin{cases}
      (q_s^{\frac{1}{2}} + q_s^{-\frac{1}{2}}) \eta_\vepsm & \text{ if } \veps_s = 1 \\
      0 & \text{ if } \veps_s = -1
      \eqstop
    \end{cases}
  \end{gather*}
  and
  \begin{gather*}
    (T_s^{(\qm)} - q_s^{\frac{1}{2}}) \eta_\vepsm =
    \begin{cases}
      0 & \text{ if } \veps_s = 1 \\
      -(q_s^{\frac{1}{2}} + q_s^{-\frac{1}{2}}) \eta_\vepsm & \text{ if } \veps_s = -1
      \eqstop
    \end{cases}
  \end{gather*}
  Hence for fixed $\vepsm$, we find that
  \begin{gather*}
    \prod_{s \in S} (T_s^{(\qm)} + \veps_s q_s^{- \veps_s \frac{1}{2}} ) (\sum_{\vepsm'} c_{\vepsm'} \eta_{\vepsm'}) = c c_\vepsm \eta_\vepsm
  \end{gather*}
  for some non-zero scalar $c$.  So if $c_\vepsm \neq 0$, then $\eta_\vepsm \in \lp(W)$.
\end{proof}

\subsubsection{Base cases}
\label{sec:base-cases}

The base case of our induction describes Hecke central vectors for irreducible right-angled Coxeter groups of rank three, which are precisely $\ZZ/2^{\oplus 2} * \ZZ/2$ and $\ZZ/2^{*3}$.  The following lemmas treat these cases, depending on the how a given multiparameter orders the generators.

\begin{lemma}
  \label{lem:rank-three-straight-order}
  Let $(W, \{s,t,u\})$ be an irreducible right-angled Coxeter group such that $[s,t] \neq e$ and $[t,u] \neq e$.  Let $\qm \in (0,1]^{\{s,t,u\}}$ satisfy $q_s \leq q_t \leq q_u$.  Then every Hecke central vector for the parameter $\qm$ in $\lp(W)$ is a linear combination of Hecke eigenvectors and $\delta_e$.
\end{lemma}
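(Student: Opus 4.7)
The plan is to show that any Hecke central $\xi \in \lp(W)$ can, after subtracting a scalar multiple of $\delta_e$ and a finite linear combination of the Hecke eigenvectors $\eta_\vepsm$, be made to vanish identically. We will extract enough rigid information about $\xi$ from Lemma~\ref{lem:restriction-double-coset}, applied to the two non-commuting pairs $(s,t)$ and $(t,u)$, and then use centrality with respect to the ``third'' generator to match the per-coset data into global eigenvector coefficients. The final appeal to Lemma~\ref{lem:hecke-ev-exists} will confirm that only those $\eta_\vepsm$ with $\vepsm \in C$ can occur.

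The main analytic ingredient is Lemma~\ref{lem:restriction-double-coset} applied to $D_1 = \langle s,t\rangle$ and $D_2 = \langle t,u\rangle$, both of which are infinite dihedral by the non-commutation hypothesis. The hypothesis $q_s \leq q_t \leq q_u$ restricts which of the three cases of that lemma applies: for $D_1$ we are either in case 1 (if $q_s = q_t$) or case 2 with $\veps_s = 1$, $\veps_t = -1$; for $D_2$ we are either in case 1 (if $q_t = q_u$) or case 2 with $\veps_t = 1$, $\veps_u = -1$. In particular, the ``extra'' sign patterns that can contribute on non-degenerate double cosets of $D_1$ (respectively $D_2$) always carry $\veps_s = 1$ (resp. $\veps_u = -1$), and in both decompositions a copy of the ``standard'' growth $q_{dwd'}^{1/2}$ appears. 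So on each non-degenerate double coset we obtain a representation of $\xi$ as a linear combination of at most three model vectors, each of which agrees with the restriction of some $\eta_\vepsm$ (or of the would-be $\eta_{\mathbf 1}$) to that coset.

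Next, I would organise the combinatorics. The identity coset $D_1$ itself is degenerate, and words in $D_1$ have to be handled directly via Lemma~\ref{lem:commutation-relation} (using $T_t$), which introduces a small number of free parameters $\xi_e$, $\xi_s$, $\xi_t$, $\xi_{st} = \xi_{ts}$; the analogous statement holds for $D_2$ with $\xi_e$, $\xi_t$, $\xi_u$, $\xi_{tu} = \xi_{ut}$; the two sets of free parameters overlap in $\xi_e$ and $\xi_t$. All non-degenerate double cosets of $D_1$ have a shortest representative beginning with $u$ (or with a mixed word involving $u$); applying centrality under $T_u$, one can propagate the data from one double coset of $D_1$ to adjacent ones and also relate them to the decomposition arising from $D_2$. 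Case~A is $[s,u] \neq e$, which gives $W = \ZZ/2 \ast \ZZ/2 \ast \ZZ/2$; Case~B is $[s,u] = e$, giving $W = (\ZZ/2 \oplus \ZZ/2) \ast \ZZ/2$. In each case the system of compatibility relations needs to be written out once and shown to be solved exactly by the span of $\delta_e$ and the eight candidate Hecke eigenvectors $\eta_\vepsm$, $\vepsm \in \{-1,1\}^{\{s,t,u\}}$.

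The hard step is carrying out this reconciliation explicitly. Because in the multiparameter regime Lemma~\ref{lem:restriction-double-coset} leaves genuine freedom on each non-degenerate double coset (three constants rather than one), the mere combinatorial description of $D_1 \backslash W / D_1$ is not enough; one must combine it with the centrality relations coming from $T_u$ to force the three-parameter families on different cosets to glue consistently. The ordering $q_s \leq q_t \leq q_u$ is what ensures that the sign patterns permitted by the two applications of Lemma~\ref{lem:restriction-double-coset} are mutually compatible (the two constraints on $\veps_t$ point in opposite directions, so the cross terms can only couple via patterns of the form $\veps_s = 1$, $\veps_u = -1$, with $\veps_t$ determined by decay). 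Once this consistency is established, subtracting the unique combination of $\delta_e$ and $\eta_\vepsm$ that matches $\xi$ on the finite data set $\{e,s,t,u,st,tu,su\}$ reduces $\xi$ to a Hecke central vector vanishing on a large set; applying Lemma~\ref{lem:commutation-relation} and the double-coset formulas then yields $\xi = 0$. Finally, Lemma~\ref{lem:hecke-ev-exists} restricts the eigenvectors that occur to those with $\vepsm \in C$, completing the proof.
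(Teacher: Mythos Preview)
Your plan identifies the right ingredients but leaves the acknowledged ``hard step'' unexecuted, and in fact that step is harder than necessary because you miss the simplification that drives the paper's proof.  You propose to match $\xi$ against all eight candidate eigenvectors on a seven-point set and then reconcile the free three-parameter families appearing on each non-degenerate double coset; this is exactly the multiparameter difficulty flagged in Remark~\ref{rem:earlier-version-analytic}, and you give no mechanism for carrying it out.

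The paper avoids this reconciliation entirely.  It subtracts only \emph{two} eigenvectors, namely $\eta_{\tilde\vepsm}$ with $\tilde\veps_s = \tilde\veps_t = 1$ and $\tilde\veps_u = \pm 1$, plus a multiple of $\delta_e$, matched on the three points $\{e,s,u\}$.  The reason this suffices is the observation you almost make but do not exploit: in Lemma~\ref{lem:restriction-double-coset} for $D_1 = \langle s,t\rangle$ with $q_s \leq q_t$, \emph{both} terms (the standard one and the $(\veps_s = 1,\veps_t = -1)$ one) have $\veps_s = 1$.  Hence stripping an initial $s$ from any word in a non-degenerate $D_1$-coset multiplies by $q_s^{1/2}$ regardless of the unknown constant $c$.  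Since the chosen eigenvectors also have $\tilde\veps_s = 1$, Lemma~\ref{lem:adding-letters} shows the formal difference $\zeta$ inherits the relation $\zeta_w = q_s^{1/2}\zeta_{sw}$.  The same argument with $D_2 = \langle t,u\rangle$ and $q_t \leq q_u$ gives $\zeta_w = q_t^{1/2}\zeta_{tw}$ when stripping $t$.  With these two deterministic stripping rules in hand, the paper runs a straightforward induction: first on words not containing $t$ (stripping $s$'s via $D_1$), then on $\ell_t(w)$ and $\pos_t(w)$ (stripping $t$'s via $D_2$, or rotating letters via Lemma~\ref{lem:commutation-relation}).  No cross-coset matching of free parameters is ever needed.

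So your approach is not wrong in spirit, but the specific route you sketch---matching many eigenvectors and then gluing per-coset data---is both more complicated and not actually carried out.  The missing idea is that the ordering $q_s \leq q_t \leq q_u$ lets you choose eigenvectors with $\tilde\veps_s = \tilde\veps_t = 1$ so that the ambiguity in Lemma~\ref{lem:restriction-double-coset} becomes invisible when stripping $s$ or $t$.
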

\begin{proof}
  Let $\xi \in \lp(W)$ be a Hecke central vector.  Consider Hecke eigenvectors as formal series, which are not necessarily elements of $\lp(W)$.  We start by finding coefficients $c_\veps$, $\veps \in \{-1,1\}$ and $c_e$ such that
  \begin{gather*}
    \xi_w = (\sum_{\veps \in \{-1,1\}} c_\veps \eta_\tvepsm + c_e \delta_e)_w
      \quad \text{ for all } w \in \{e,u,s\}
      \eqcomma
    \end{gather*}
    where $\eta_\tvepsm$ denotes the Hecke eigenvector for $(W, \{s,t,u\} ,\qm)$ associated with the sign $\tilde \vepsm \in \{-1,1\}^{\{s,t,u\}}$ satisfying $\tilde \veps_s = \tilde \veps_t = 1$ and $\tilde \veps_u = \veps$.  Existence of these coefficients is clear, as the vectors $\eta_\tvepsm$, $\veps \in \{-1,1\}$ are linearly independent when restricted to $\{e,u,s\}$. %
  Write $\eta = \sum_{\veps \in \{-1,1\}} c_\veps \eta_{\tilde \veps} + c_e \delta_e$ still considered as a formal series and consider the formal series $\zeta = \eta - \xi$.  We show step by step that $ \zeta_w=0$ for all $w \in W$; note that although $\zeta$ is in principle a formal series which need not define an element of $\ell^r(W)$ we can apply to it the linear recurrence formulas as long as we know they are valid both for a concrete formal series $\eta_{\tilde \veps}$ and central vectors in $\ell^r(W)$, appealing to Lemmas~\ref{lem:commutation-relation}, \ref{lem:restriction-double-coset} and \ref{lem:adding-letters}.  We shall often use this fact below without any further comment.

    We first consider words not containing $t$ and distinguish two cases.  If $[s,u] = e$, then applying Lemma \ref{lem:restriction-double-coset} to the non-degenerate double coset $\lang s,t \rang su \lang s,t \rang$ and making use of the fact that $q_s \leq q_t$, we find that
    \begin{gather*}
      \zeta_{su} = q_s^{\frac{1}{2}} \zeta_u = 0%
      \eqcomma
    \end{gather*}
    where the first identity follows from Lemma \ref{lem:restriction-double-coset} applied to $\xi$ and~\ref{lem:adding-letters} applied to $\eta$ making use of the fact that $\tilde \veps_s = 1$.  This shows that $ \zeta_w=0$ for all $w \in \lang s,u \rang = \{e,s,u,su\}$. Considering the second case $[s,u] \neq e$, we prove that $\zeta_w=0$ for all $w \in \lang s,u \rang$ by induction on the length $|w|$.  The statement is clear for $|w| \leq 1$.  Let $w \in \lang s,u \rang$ be of length at least two.  If $w$ starts or ends with $s$, then $w$ contains some $u$, so that $\lang s,t \rang w \lang s,t \rang$ is non-degenerate. Considering the situation where $w$ starts with $s$, a similar argument as above combined with the induction hypothesis implies that
    \begin{gather*}
      \zeta_w = q_s^{\frac{1}{2}} \zeta_{sw} = 0 %
      \eqstop
    \end{gather*}
    The analogue calculation applies in the case $w$ ends with $s$.  If $w$ neither  starts nor ends with $s$, then it starts and ends with $u$, so that Lemmas \ref{lem:commutation-relation} and \ref{lem:adding-letters} say that
    \begin{gather*}
      \zeta_w = \zeta_{uwu} + p_u \zeta_{uw}
      \eqstop
    \end{gather*}
    The induction hypothesis thus shows that $ \zeta_w=0$.

    Let us next consider $w \in W$ containing some instance of $t$.  We assume that $w$ starts with $t$ and that $ \zeta_x=0$ holds for all $x \in W$ satisfying $\ell_t(x) < \ell_t(w)$.  If $w$ contains $s$, then $\lang t,u \rang w \lang t,u \rang$ is non-degenerate, so that
    \begin{gather*}
      \zeta_w = q_t^{\frac{1}{2}} \zeta_{tw} = 0 %
      \eqcomma
    \end{gather*}
    where the first equality makes use of the fact that the fact that $q_t \leq q_u$ and that $\tilde \veps_t = 1$ and the second makes use of the induction assumption. %
    If $w$ does not contain $s$, that is $w \in \lang u,t \rang$, then $\lang s,t \rang w \lang s,t \rang$ is non-degenerate.  The last part implies $ \zeta_{ws}=0$ and we find that
    \begin{gather*}
      \zeta_w = q_s^{-\frac{1}{2}} \zeta_{ws} = 0 %
    \end{gather*}
    where the first equality makes use of the fact that $\tilde \veps_s = 1$.

    We now consider words $w \in W$ such that $\ell_t(w) \geq 1$ and such that $\pos_t(w) \geq 2$.  We assume that $\zeta_x=0$ for all $x \in W$ such that $\ell_t(x) = \ell_t(w)$ and $\pos_t(x) < \pos_t(w)$.  There is $v \in \{s,u\}$ such that $w$ starts with $v$.  If $w$ also ends with $v$, then Lemma \ref{lem:commutation-relation}, the induction hypothesis and Lemma \ref{lem:adding-letters} imply that
    \begin{gather*}
      \zeta_w = \zeta_{vwv} - p_v \zeta_{vw} = 0 %
      \eqstop
    \end{gather*}
    If $w$ does not end with $v$, then we make use of the fact that $\pos_t(vwv) < \pos_t(w)$ and find that
    \begin{gather*}
      \zeta_w = \zeta_{vwv} = 0 %
      \eqstop
    \end{gather*}
    This concludes the proof of the lemma.    
\end{proof}

The next two lemmas treat specifically the case of $\ZZ/2^{\oplus 2} * \ZZ/2$. Each of them considers another constellation of the values of a multiparameter.
\begin{lemma}
  \label{lem:rank-three-free-generator-largest}
  Let $(W, \{s,t,u\})$ be the irreducible right-angled Coxeter system satisfying $[s,t] \neq e$, $[t,u] \neq e$ and $[s,u] = e$.  Let $\qm \in (0,1]^{\{s,t,u\}}$ satisfy $q_s, q_u \leq q_t$.  Then every Hecke central vector for the parameter $\qm$ in $\lp(W)$ is a linear combination of Hecke eigenvectors and $\delta_e$.  
\end{lemma}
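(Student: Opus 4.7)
The argument follows closely the template of Lemma~\ref{lem:rank-three-straight-order}, adapted to the new parameter hierarchy in which $t$, rather than $u$, carries the largest parameter. Set $\tilde{\vepsm} = (1,-1,1) \in \{-1,1\}^{\{s,t,u\}}$, matching the sign pattern supplied by Lemma~\ref{lem:restriction-double-coset} in both cases $q_s \le q_t$ and $q_u \le q_t$. Since the formal series $\eta_{\mathbf{1}}$ and $\eta_{\tilde{\vepsm}}$ take the distinct values $q_t^{\frac{1}{2}}$ and $-q_t^{-\frac{1}{2}}$ at $t$, one may choose $c_1, c_2, c_e \in \CC$ so that $\eta := c_1 \eta_{\mathbf{1}} + c_2 \eta_{\tilde{\vepsm}} + c_e \delta_e$ agrees with $\xi$ on the test set $\{e,t,u\}$. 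Setting $\zeta := \eta - \xi$, the goal is to prove $\zeta_w = 0$ for every $w \in W$; as in the previous lemma, $\zeta$ is only a formal series, but the recurrences of Lemmas~\ref{lem:commutation-relation}, \ref{lem:restriction-double-coset} and \ref{lem:adding-letters} continue to apply pointwise.

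\textbf{Step A.} To verify $\zeta = 0$ on $\langle s,u \rangle = \{e,s,u,su\}$ only $\zeta_s$ and $\zeta_{su}$ need treatment. Applying Lemma~\ref{lem:restriction-double-coset} to the non-degenerate double coset $\langle s,t \rangle u \langle s,t \rangle$ with $q_s \le q_t$ and $\tilde{\veps}_s = 1$ collapses the two summands in the formula to a common ratio, yielding $\zeta_{su} = q_s^{\frac{1}{2}} \zeta_u = 0$. The symmetric application to $\langle t,u \rangle s \langle t,u \rangle$ using $q_u \le q_t$ and $\tilde{\veps}_u = 1$ gives $\zeta_{us} = q_u^{\frac{1}{2}} \zeta_s$; since $[s,u] = e$ forces $us = su$, we conclude $\zeta_s = 0$.

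\textbf{Step B.} One then shows $\zeta_w = 0$ for all $w$ containing $t$ by induction on $|w|$, with base $w = t$ supplied by the test set. If $w$ starts with some $v \in \{s,u\}$ and $w \notin \langle v,t \rangle$, then $\langle v,t \rangle w \langle v,t \rangle$ is non-degenerate and the Step A calculation propagates along it to give $\zeta_w = q_v^{\frac{1}{2}} \zeta_{vw}$, which vanishes by the inductive hypothesis or Step A; a symmetric right-stripping handles $w$ ending in $v \in \{s,u\}$ with $w \notin \langle v,t \rangle$. If $w$ starts and ends with $t$ (so $|w| \ge 3$), then writing $w = tw_0 t$ and applying Lemma~\ref{lem:commutation-relation} to $T_t^{(\qm)}$ yields $\zeta_w = \zeta_{w_0} + p_t \zeta_{tw_0}$, both terms of strictly smaller length. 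In the residual ``degenerate'' situation $w \in \langle v,t \rangle$ for some $v \in \{s,u\}$, one applies Lemma~\ref{lem:restriction-double-coset} to the non-degenerate coset $\langle t,u' \rangle w \langle t,u' \rangle$ with $u' \in \{s,u\} \setminus \{v\}$; its shortest element lies in $\langle v,t \rangle$, has length strictly smaller than $|w|$, and is therefore already known to be annihilated by $\zeta$, so that Lemma~\ref{lem:restriction-double-coset} propagates $\zeta = 0$ along the whole coset and hence to $w$.

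\textbf{Main obstacle.} The novel difficulty compared with Lemma~\ref{lem:rank-three-straight-order} is that the generator with the largest parameter is now the non-commuting hub of the Coxeter graph; since $\tilde{\veps}_t = -1$, the letter $t$ cannot be stripped from a reduced word via Lemma~\ref{lem:restriction-double-coset}, because the two terms of the linear combination no longer collapse to a common ratio. All length reductions must therefore proceed via the leaf generators $s, u$ from either end, or via the two-letter commutation for $T_t^{(\qm)}$. Handling the degenerate cosets, in which $w$ lies entirely in one of the dihedral subgroups $\langle s,t \rangle$ or $\langle t,u \rangle$ so that the first double coset tried becomes a simple coset, is the most delicate ingredient and forces the detour through the orthogonal dihedral subgroup described above.
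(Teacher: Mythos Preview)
Your Step B contains a genuine gap in the ``degenerate'' case. Take $v=s$ and consider $w=sts$ (or more generally any alternating word in $\langle s,t\rangle$ that starts and ends with $s$). You pass to the double coset $\langle t,u\rangle\, w\,\langle t,u\rangle$. Since $w$ contains no $u$, and since $w$ neither starts nor ends with $t$ or $u$, the shortest element of this double coset is $w$ itself. Thus your claim that ``its shortest element \ldots has length strictly smaller than $|w|$'' is false, and the induction on $|w|$ does not close. The same issue infects the even-length words such as $stst$: their shortest $\langle t,u\rangle$-representative is $sts$, whose vanishing you have not established.

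This is exactly the place where the present lemma is harder than Lemma~\ref{lem:rank-three-straight-order}: because $\tilde\veps_t=-1$, no double coset lets you strip a letter from $w\in\langle s,t\rangle$ directly. The paper's proof confronts this head-on. For $w\in\langle s,t\rangle$ starting with $s$ it does not try to shrink $w$ inside a single double coset; instead it first derives the identity $\zeta_w=q_s^{1/2}\zeta_{sw}$ by a detour through $u$: one relates $\zeta_w$ to $\zeta_{wu}$ and to $\zeta_{suwu}=\zeta_{uswu}$ via two different double cosets (using $\tilde\veps_u=1$ throughout), and then applies the commutation relation $\zeta_{sw}=\zeta_{u(sw)u}-p_u\,\zeta_{swu}$ to combine these into $\zeta_{sw}=q_s^{-1/2}\zeta_w$. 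Only after this algebraic identity is in hand can one strip $s$ and invoke the length induction. Your argument needs an analogous mechanism for the words $v t v,\,vtvtv,\ldots$; without it the induction stalls at length~$3$.
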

\begin{proof}
  Let $\xi \in \lp(W)$ be a Hecke central vector.  Consider Hecke eigenvectors as formal series, which are not necessarily elements of $\lp(W)$.  We start by finding coefficients $c_\veps$, $\veps \in \{-1,1\}$ and $c_e$ such that
  \begin{gather*}
    \xi_w = (\sum_{\veps \in \{-1,1\}} c_\veps \eta_\tvepsm + c_e \delta_e)_w
      \quad \text{ for all } w \in \{e,s,t\}
      \eqcomma
    \end{gather*}
    where $\eta_\tvepsm$ denotes the Hecke eigenvector for $(W,\{s,t,u\},\qm)$ associated with the sign $\tilde \vepsm \in \{-1,1\}^{\{s,t,u\}}$ satisfying $\tilde \veps_s = 1$, $\tilde \veps_t = \veps$ and $\tilde \veps_u = 1$.  Existence of these coefficients is clear, as the vectors $\eta_\tvepsm$, $\veps \in \{-1,1\}$ are linearly independent when restricted to $\{e,s,t\}$. %
 Write $\eta = \sum_{\veps \in \{-1,1\}} c_\veps \eta_\tvepsm + c_e \delta_e$ and consider the formal series $\zeta= \xi - \eta$.  Then $ \zeta_w=0$ for all $w \in \{e,s,t\}$.  We will show that $\zeta_w=0$ for all $w \in W$.

    First observe that
    \begin{gather*}
      \zeta_{su} = q_u^{\frac{1}{2}} \zeta_s =  0 %
      \eqcomma
    \end{gather*}
    by an application of Lemma \ref{lem:restriction-double-coset} to the non-degenerate double coset $\lang u,t \rang s \lang u,t \rang$ and Lemma \ref{lem:adding-letters} making use of the fact that $\tilde \epsilon_u = 1$.  Similarly, we find that
    \begin{gather*}
      \zeta_u = q_s^{-\frac{1}{2}} \zeta_{su} = 0 %
      \eqstop
    \end{gather*}

    Next consider  $w \in \lang s,t \rang$ of length at least $2$ and assume that $\zeta_x = 0$ for all $x \in \lang s,t \rang$ such that $|x| < |w|$.
    If $w$ starts with $s$, we first make use of the fact that $q_u \leq q_t$ and then of the fact that $q_s \leq q_t$ to find that
    \begin{gather*}
      \zeta_w = q_u^{-\frac{1}{2}} \zeta_{wu} = q_s^{\frac{1}{2}} q_u^{-\frac{1}{2}} \zeta_{swu} 
      \eqstop
    \end{gather*}
    We can also use the double cosets $\langle u,t \rangle w \langle u,t \rangle$ and $\langle s,t \rangle uwu \langle s,t \rangle$ to find that
    \begin{gather*}
      \zeta_w = q_s^{\frac{1}{2}} q_u^{-1} \zeta_{suwu} = q_s^{\frac{1}{2}} q_u^{-1} \zeta_{uswu}
      \eqstop
    \end{gather*}
    This allows us to employ Lemmas \ref{lem:commutation-relation} and \ref{lem:adding-letters} to the extent that
    \begin{gather*}
      \zeta_{sw}
      =
      \zeta_{uswu} - p_u \zeta_{swu}
      =
      q_s^{-\frac{1}{2}} q_u \zeta_w - p_u q_s^{-\frac{1}{2}} q_u^{\frac{1}{2}} \zeta_w
      =
      q_s^{-\frac{1}{2}} (q_u - p_u q_u^{\frac{1}{2}}) \zeta_w
      =
      q_s^{-\frac{1}{2}} \zeta_w
      \eqstop
    \end{gather*}
    Hence we find that
    \begin{gather*}
      \zeta_w = q_s^{\frac{1}{2}} \zeta_{sw} = 0 %
      \eqcomma
    \end{gather*}
making use of the induction hypothesis.  We also find that $ \zeta_w=0$ if $w$ starts with $t$, appealing to Lemmas~\ref{lem:commutation-relation} and \ref{lem:adding-letters}.  We have thus found that $ \zeta_w=0$ for all $w \in \lang s,t \rang$.

    It remains to show that $ \zeta_w=0$ for all $w \in W$ that contain $u$ and $t$. Note that if $w \in \lang u, t \rang$ we can argue as above, as the roles of $s$ and $u$ are symmetric. Fix then $w \in W$ which contains $u$ and $t$ but does not belong to $\lang u, t \rang$,  and assume further that $w$ starts with $u$ and that $ \zeta_x=0$ for all words $x \in W$ such that $\ell_u(x) < \ell_u(w)$.  Then Lemmas \ref{lem:restriction-double-coset} and \ref{lem:adding-letters} show that
    \begin{gather*}
      \zeta_w = q_u^{\frac{1}{2}} \zeta_{uw} = 0%
      \eqcomma
    \end{gather*}
    since $\tilde \veps_u = 1$.  Next take $w \in W$ containing $u$ and satisfying $\pos_u(w) \geq 2$.  Assume that $ \zeta_x=0$ for all $x \in W$ such that $\ell_u(x) = \ell_u(w)$ and $\pos_u(x) < \pos_u(w)$.  Note that $w$ does not start with $u$.  Assume first that $w$ starts with $t$.  If $w$ also ends with $t$, then Lemmas \ref{lem:commutation-relation} and \ref{lem:adding-letters} apply, saying that
    \begin{gather*}
      \zeta_w = \zeta_{twt} - p_t \zeta_{tw} = 0 %
      \eqstop
    \end{gather*}
    If $w$ does not end with $t$, then the same lemmas imply that
    \begin{gather*}
      \zeta_w = \zeta_{twt} = 0 %
      \eqstop
    \end{gather*}
    The case of $w$ starting with $s$ can be dealt with in a similar way.  This finishes the proof of the lemma.
\end{proof}

\begin{lemma}
  \label{lem:rank-three-free-generator-smallest}
  Let $(W, \{s,t,u\})$ be the irreducible right-angled Coxeter system satisfying $[s,t] \neq e$, $[t,u] \neq e$ and $[s,u] = e$.  Let $\qm \in (0,1]^{\{s,t,u\}}$ satisfy $q_s, q_u \geq q_t$.  Then every Hecke central vector for the parameter $\qm$ in $\lp(W)$ is a linear combination of Hecke eigenvectors and $\delta_e$.  
\end{lemma}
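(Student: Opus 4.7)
The plan is to mirror the strategy of the proof of Lemma \ref{lem:rank-three-free-generator-largest}, adapting it to the fact that now $q_t$ is the smallest of the three parameters rather than the largest.  This reversal has two consequences: on non-degenerate $\langle s, t\rangle$- and $\langle t, u\rangle$-double cosets, the admissible sign patterns provided by Lemma \ref{lem:restriction-double-coset} become $(\veps_s, \veps_t) = (-1, 1)$ and $(\veps_t, \veps_u) = (1, -1)$, and the direct one-step derivation of $\zeta_u = 0$ used in the previous proof no longer applies.  The main obstacle, as a result, will be to establish $\zeta_u = 0$ via a cross-coset argument that substitutes for that step.

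For the setup I will fix coefficients $c_1, c_2, c_e \in \CC$ so that $\xi$ agrees with $\eta := c_1 \eta_+ + c_2 \eta_- + c_e \delta_e$ on $\{e, s, t\}$, where $\eta_\pm$ denotes the formal Hecke eigenvector with signs $(\tilde\veps_s, \tilde\veps_t, \tilde\veps_u) = (\pm 1, 1, 1)$; this is possible since the relevant $3 \times 3$ matrix of values has determinant $q_t^{1/2}(q_s^{1/2} + q_s^{-1/2}) \neq 0$.  Setting $\zeta := \xi - \eta$ gives $\zeta_w = 0$ for $w \in \{e, s, t\}$.  Applying Lemma \ref{lem:restriction-double-coset} to the non-degenerate coset $\langle t, u\rangle s \langle t, u\rangle$ (shortest element $s$), the vanishing of $\zeta_s$ forces $\zeta_{dsd'} = 0$ for all $d, d' \in \langle t, u\rangle$; in particular $\zeta_{st} = \zeta_{ts} = 0$ and $\zeta_{su} = \zeta_{us} = 0$, the latter two via $[s, u] = e$.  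I will then show $\zeta|_{\langle s, t\rangle} = 0$ by induction on $|w|$: if $w = svs$ with $v \in \langle s, t\rangle$ (same leading and trailing letter), Lemma \ref{lem:commutation-relation} yields $\zeta_w = \zeta_v + p_s \zeta_{sv}$, both vanishing by induction; if $w$ has different leading and trailing letters, a single multiplication by $t$ on the appropriate side reduces $w$ to a shorter word $w''$ of the first type, and Lemma \ref{lem:restriction-double-coset} applied to the non-degenerate coset $\langle t, u\rangle w'' \langle t, u\rangle$ transfers the vanishing from $w''$ to $w$.  In particular $\zeta_{sts} = 0$, and hence by the same lemma applied to $\langle t, u\rangle sts \langle t, u\rangle$, $\zeta_{stsu} = 0$.

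The hard part is now the deduction of $\zeta_u = 0$.  Applying Lemma \ref{lem:restriction-double-coset} to the non-degenerate coset $\langle s, t\rangle u \langle s, t\rangle$ with shortest element $u$ produces some $c_u \in \CC$ such that
\begin{gather*}
  \zeta_{dud'} = \zeta_u \bigl[\, c_u\, (q_{dud'}/q_u)^{1/2} + (1 - c_u)\, (q_{dud', \vepsm}/q_{u, \vepsm})^{1/2}\, \bigr]
\end{gather*}
for all $d, d' \in \langle s, t\rangle$ and any $\vepsm$ with $\veps_s = -1, \veps_t = 1$.  Evaluating at $(d, d') = (s, e)$ and using $\zeta_{su} = 0$ forces either $\zeta_u = 0$ or $c_u = 1/(1 + q_s)$; evaluating at $(d, d') = (sts, e)$ and using $\zeta_{stsu} = 0$ forces either $\zeta_u = 0$ or $c_u = 1/(1 - q_s^2)$.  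Equality of these two values of $c_u$ requires $q_s(1 + q_s) = 0$, which is impossible for $q_s \in (0, 1]$, so $\zeta_u = 0$.  The degenerate sub-cases, namely $q_s = q_t$ (where the formula of Lemma \ref{lem:restriction-double-coset} becomes unique) and $q_s = 1$ (where the second equation is already contradictory), yield $\zeta_u = 0$ more directly.  This cross-coset comparison is the crucial new ingredient relative to the proof of Lemma \ref{lem:rank-three-free-generator-largest}.

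Once $\zeta_u = 0$ has been established, Lemma \ref{lem:restriction-double-coset} immediately gives $\zeta = 0$ on $\langle s, t\rangle u \langle s, t\rangle$, the analogous induction on $|w|$ with the roles of $s$ and $u$ exchanged gives $\zeta|_{\langle t, u\rangle} = 0$, and the concluding inductions on $\ell_u(w)$ and $\pos_u(w)$ from the proof of Lemma \ref{lem:rank-three-free-generator-largest} propagate the vanishing to all of $W$ essentially verbatim, using Lemmas \ref{lem:commutation-relation}, \ref{lem:restriction-double-coset} and \ref{lem:adding-letters} in the usual way.
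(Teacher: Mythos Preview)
There is a genuine gap in your argument at the step where you claim that ``the vanishing of $\zeta_s$ forces $\zeta_{dsd'} = 0$ for all $d, d' \in \langle t, u\rangle$'', in particular $\zeta_{su} = 0$.  The formula of Lemma~\ref{lem:restriction-double-coset} applies to the $\ell^r$-vector $\xi$, not directly to $\zeta$; to transfer it to $\zeta$ you must subtract the corresponding relation for each eigenvector summand of $\eta$.  On the non-degenerate coset $\langle t,u\rangle s \langle t,u\rangle$ with $q_t < q_u$, the lemma gives $\xi_{dsd'} = \xi_s\bigl[c\,A + (1-c)\,B\bigr]$, where $A = (q_{dsd'}/q_s)^{1/2}$ corresponds to $(\veps_t,\veps_u)=(1,1)$ and $B$ to $(\veps_t,\veps_u)=(1,-1)$.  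Your eigenvectors $\eta_\pm$ both have $\tilde\veps_u = 1$, so they contribute only an $A$-term: $(\eta_\pm)_{dsd'} = (\eta_\pm)_s\,A$.  Subtracting and using $\zeta_s = 0$ (i.e.\ $\xi_s = \eta_s$) yields $\zeta_{dsd'} = \xi_s(1-c)(B-A)$, which has no reason to vanish when $d$ or $d'$ contains $u$.  Concretely, if $\xi = \eta_{(1,1,-1)}$ happens to lie in $\ell^r(W)$, your fit gives $c_1=1$, $c_2=c_e=0$, hence $\zeta = \eta_{(1,1,-1)} - \eta_{(1,1,1)}$ and $\zeta_{su} = -q_s^{1/2}(q_u^{1/2}+q_u^{-1/2}) \neq 0$.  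Since your later cross-coset deduction of $\zeta_u = 0$ takes $\zeta_{su}=0$ and $\zeta_{stsu}=0$ as inputs, the whole chain collapses.

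The underlying issue is that two eigenvectors are not enough: because $q_t$ is the smallest parameter, applications of Lemma~\ref{lem:restriction-double-coset} to both the $\langle s,t\rangle$- and the $\langle t,u\rangle$-cosets produce second terms with $\veps_s=-1$ and $\veps_u=-1$ respectively, so your ansatz must absorb \emph{both} signs for $s$ and for $u$.  This is exactly why the paper's proof uses the four eigenvectors with $\tilde\veps_t=1$ and $(\tilde\veps_s,\tilde\veps_u)\in\{-1,1\}^2$, fitted on the five points $\{e,s,u,su,t\}$.  With that choice, $\zeta$ already vanishes on all of $\langle s,u\rangle\cup\{t\}$ from the start, and the induction on $\ell_t(w)$ proceeds by stripping a leading $t$ via Lemma~\ref{lem:restriction-double-coset} --- which is safe because $\veps_t = 1$ in \emph{both} terms of the formula, so the multiplicative factor for $t$ is unambiguously $q_t^{1/2}$.
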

\begin{proof}
  Let $\xi \in \lp(W)$ be a Hecke central vector.  Once again consider Hecke eigenvectors as formal series, which are not necessarily elements of $\lp(W)$, and start by finding coefficients $c_\veps$, $\veps \in \{-1,1\}^{\{s,u\}}$ and $c_e$ such that
  \begin{gather*}
    \xi_w = (\sum_{\vepsm \in \{-1,1\}^{\{s,u\}}} c_\vepsm \eta_\tvepsm + c_e \delta_e)_w
      \quad \text{ for all } w \in \{e,s,u,su,t\}
      \eqcomma
    \end{gather*}
    where $\eta_\tvepsm$ denotes the Hecke eigenvector for $(W,\{s,t,u\},\qm)$ associated with the sign $\tilde \vepsm \in \{-1,1\}^{\{s,t,u\}}$ satisfying $\tilde \veps_s = \veps_s$, $\tilde \veps_t = 1$ and $\tilde \veps_u = \veps_u$.  Existence of these coefficients is clear, as the vectors $\eta_\tvepsm$, $\tvepsm \in \{-1,1\}^{\{s,u\}}$ are linearly independent when restricted to $\{e,s,u,su,t\}$.
    Write $\eta = \sum_{\vepsm \in \{-1,1\}^{\{s,u\}}} c_\vepsm \eta_\tvepsm + c_e \delta_e$ and consider the formal series $\zeta = \eta - \xi$.  We will show that $ \zeta_w=0$ for all $w \in W$.

    By choice of the coefficients $c_\vepsm$, we have $\zeta_w=0$ for all $w \in W$ that do not contain $t$ as well as for $w = t$.  Let $w \in W$ be a word of length at least two that contains $t$ and starts with $t$.  We assume that $ \zeta_x=0$ holds for all $x \in W$ such that $\ell_t(x) < \ell_t(w)$.  Then at least one of the double cosets $\lang s,t \rang w \lang s,t \rang$ or $\lang t,u \rang w \lang t,u \rang$ is non-degenerate, so that we can apply Lemmas \ref{lem:restriction-double-coset} and \ref{lem:adding-letters} to find that
    \begin{gather*}
      \zeta_w = q_t^{\frac{1}{2}} \zeta_{tw} =0%
      \eqstop
    \end{gather*}
    Let $w \in W$ be a word of length at least two that contains $t$ and such that $\pos_t(w) \geq 2$.  We assume that $\zeta_x=0$ holds for all $x \in W$ such that $\ell_t(x) = \ell_t(w)$ and $\pos_t(x) < \pos_t(w)$.  Let $v \in \{s,u\}$ be an initial letter of $w$.  If $v$ is also a terminal letter of $w$, Lemmas \ref{lem:commutation-relation} and \ref{lem:adding-letters} say that
    \begin{gather*}
      \zeta_w
      =
      \zeta_{vwv} - p_v \zeta_{vw}
      = 0
      \eqstop
    \end{gather*}
    Similarly, if $v$ is not a terminal letter of $w$, then
    \begin{gather*}
      \zeta_w
      =
      \zeta_{vwv}
      = 0
      \eqstop
    \end{gather*}
    This finishes the proof of the lemma.
\end{proof}

\begin{remark}
  \label{rem:dykema}
  It is worth noting that in the base cases above the description of Hecke central vectors in $\ltwo(W)$, which corresponds to computing centres of the associated Hecke von Neumann algebras, can be obtained by means of Dykema's work \cite[Theorem 2.3]{dykema93-hyperfinite} and %
  \cite[Proposition 2.4]{dykema93-hyperfinite}, which identifies the structure of von Neumann algebras arising as (iterated) free products of finite-dimensional algebras.
\end{remark}

\subsubsection{Reduction}
\label{sec:reduction}

In this section we prove two lemmas that will allow us to manipulate Coxeter graphs in Section~\ref{sec:proofs}.  Lemma \ref{lem:remove-arrows-head} will allow us to remove specific vertices depending on the multiparameter under consideration, while its combination with Lemma \ref{lem:remove-edge} will allow us to remove leaves for arbitrary multiparameters.

\begin{lemma}
  \label{lem:remove-arrows-head}
  Let $(W,S)$ be an irreducible right-angled Coxeter system, let $\qm \in (0,1]^S$ and let $r \in [1,\infty)$.  Assume that there are pairwise different letters $s,t,u \in S$ such that
  \begin{itemize}
  \item $[s,t] \neq e$ and $[u,t] \neq e$; %
  \item $q_s \leq q_t$;
  \item every Hecke central vector of $W|_{S \setminus s}$ in $\lp(W|_{S \setminus s})$ is a linear combination of Hecke eigenvectors and $\delta_e$.
  \end{itemize}
  Then every Hecke central vector for the multiparameter $\qm$ in $\lp(W)$ is a linear combination of Hecke eigenvectors and $\delta_e$.
\end{lemma}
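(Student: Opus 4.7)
Let $\xi \in \lp(W)$ be a Hecke central vector. The plan is to restrict $\xi$ to the parabolic subgroup $W|_{S \setminus s}$, decompose via the induction hypothesis, lift this decomposition to a formal series on $W$, and show that the difference with $\xi$ vanishes identically. First, $\xi|_{W|_{S \setminus s}}$ is Hecke central in $\lp(W|_{S \setminus s})$: the recurrences of Lemma \ref{lem:commutation-relation} only involve words inside the subgroup when the letter lies in $S \setminus s$, and by Tits' theorem the word length in $W$ agrees with the word length in $W|_{S \setminus s}$ on subgroup elements. The induction hypothesis thus yields a decomposition $\xi|_{W|_{S \setminus s}} = c_e \delta_e + \sum_{\vepsm'} c_{\vepsm'} \eta_{\vepsm'}^{\mathrm{sub}}$, where $\eta_{\vepsm'}^{\mathrm{sub}}$ denotes the Hecke eigenvector of the subgroup associated with $\vepsm' \in \{-1,1\}^{S \setminus s}$.

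For each $\vepsm'$ with $c_{\vepsm'} \neq 0$, I extend $\vepsm'$ to $\vepsm \in \{-1,1\}^S$ by setting $\veps_s = 1$, a choice motivated by $q_s \leq 1$ in view of Proposition \ref{prop:hecke-eigenvectors}, and form the associated formal Hecke eigenvector $\eta_\vepsm$ on $W$ (a priori not in $\lp(W)$). Set $\eta = c_e \delta_e + \sum_{\vepsm'} c_{\vepsm'} \eta_\vepsm$ and $\zeta = \xi - \eta$. By Lemma \ref{lem:adding-letters}, the formal coefficients of each $\eta_\vepsm$ obey all central recurrences, so $\zeta$ satisfies the relations of Lemma \ref{lem:commutation-relation} and the double-coset formulas of Lemma \ref{lem:restriction-double-coset}, and by construction $\zeta_w = 0$ for every $w \in W|_{S \setminus s}$.

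The main step is to show $\zeta_w = 0$ for every $w \in W$. I would proceed by nested induction on $(\ell_s(w), \pos_s(w))$ ordered lexicographically, the base case $\ell_s(w) = 0$ being already handled. The principal tool is Lemma \ref{lem:restriction-double-coset} applied to double cosets $\langle s, t \rangle w \langle s, t \rangle$. When $w$ starts with $s$ and this coset is non-degenerate, a direct computation with the formulas of that lemma --- exploiting $q_s \leq q_t$ so that only sign patterns with $\veps_s = 1$ contribute, exactly those used in $\eta$ --- yields the ratio relation $\zeta_w = q_s^{1/2} \zeta_{sw}$, and since $\ell_s(sw) < \ell_s(w)$ the outer induction closes the case. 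When $w$ does not start with $s$, the central relations of Lemma \ref{lem:commutation-relation} for the initial letter of $w$ reduce $\zeta_w$ to values with strictly smaller $\pos_s$. The hypothesis $[u,t] \neq e$ plays the same role it does in Lemmas \ref{lem:rank-three-straight-order} and \ref{lem:rank-three-free-generator-largest}: it guarantees that sufficiently many $\langle s, t \rangle$-cosets encountered during the induction are non-degenerate.

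The main obstacle is the residual case in which $w \in \langle s, t \rangle$, where the $\langle s, t \rangle$-coset is degenerate and the previous ratio argument collapses. I plan to handle this by invoking Lemma \ref{lem:restriction-double-coset} for the non-degenerate double coset $\langle t, u \rangle w \langle t, u \rangle$ --- non-degenerate because $w \in \langle s, t \rangle$ contains $s \notin \langle t, u \rangle$, and $s$ does not commute with $t$ --- combined with further iterated use of central relations, following the template of the detailed case analysis in Lemmas \ref{lem:rank-three-straight-order}--\ref{lem:rank-three-free-generator-smallest}. Once $\zeta \equiv 0$ is established, $\xi = \eta$ as formal series on $W$, and since $\xi \in \lp(W)$, Lemma \ref{lem:hecke-ev-exists} forces each nonzero-coefficient $\eta_\vepsm$ to lie in $\lp(W)$, yielding the claimed decomposition of $\xi$.
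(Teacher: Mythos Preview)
Your overall strategy coincides with the paper's: restrict $\xi$ to $W|_{S\setminus s}$, decompose there, extend each sign pattern by $\veps_s=1$, form $\zeta=\xi-\eta$, and run a nested induction on $(\ell_s(w),\pos_s(w))$ using the $\langle s,t\rangle$-double cosets together with $q_s\leq q_t$ to obtain the ratio $\zeta_w=q_s^{1/2}\zeta_{sw}$. The reduction of $\pos_s$ via Lemma~\ref{lem:commutation-relation} for words not starting with $s$ is exactly what the paper does.

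Where your sketch leaves a genuine gap is in the identification of the ``residual'' case. You assert that the only obstruction to non-degeneracy of $\langle s,t\rangle w\langle s,t\rangle$ is $w\in\langle s,t\rangle$, but this is too narrow: if $|S|>3$ and some $v\in S\setminus\{s,t,u\}$ commutes with both $s$ and $t$, then a word like $w=sv$ lies outside $\langle s,t\rangle$ yet has degenerate $\langle s,t\rangle$-double coset. The paper does not split on degeneracy at all; its criterion for the direct step is whether $w$ \emph{contains} $u$. When $w$ starts with $s$ and does not contain $u$, two further cases are distinguished: (Case~2) $w$ contains $t$, handled by conjugating with $u$ and, if $[s,u]\neq e$, by a further $\langle s,u\rangle$-double coset rotation to feed back into Case~1; and (Case~3) $w$ contains neither $t$ nor $u$, handled by conjugating with $t$ and then controlling $\zeta_{twt}$ via the $\langle t,u\rangle$-double coset.

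The second subtlety you do not anticipate is that Case~3 requires a split on whether $q_t\leq q_u$ or $q_u\leq q_t$. In the former subcase Lemma~\ref{lem:restriction-double-coset} alone is not enough: one must first invoke Lemma~\ref{lem:hecke-ev-exists} on $W|_{S\setminus s}$ to force $\veps_t=1$ for every sign pattern with nonzero coefficient, and only then does the double-coset formula yield $\zeta_{twt}=q_t^{1/2}\zeta_{wt}$. In the latter subcase a different manipulation via $\zeta_{twt}=q_u^{-1/2}\zeta_{twtu}=q_u^{-1/2}\zeta_{wtut}$ is used instead. Your plan to apply the $\langle t,u\rangle$-double coset is the right instrument here, but the argument does not go through without this extra input on $\veps_t$.
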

\begin{proof}
  Let $\xi \in \lp(W)$ be a Hecke central vector.  Using Proposition \ref{prop:hecke-eigenvectors} and the assumption on $W|_{S \setminus s}$ we find coefficients $c_\vepsm \in \CC$, $\vepsm \in \{-1,1\}^{S \setminus s}$ and $c_e \in \CC$ such that
  \begin{gather*}
    \xi_w = (\sum_\vepsm c_\vepsm \eta_\tvepsm  + c_e \delta_e)_w
  \end{gather*}
  for all $w \in W|_{S \setminus s}$, where $\eta_\tvepsm$ denotes the Hecke eigenvector for $W$ associated with the sign $\tvepsm$  such that $\tilde \veps_t = \veps_t$ for $t \neq s$ and $\tilde \veps_s = 1$.  Write $\eta = \sum_\vepsm c_\vepsm \eta_\tvepsm  + c_e \delta_e$, where the right-hand side is considered a formal series, and consider the formal series $\zeta = \xi - \eta$. We will show that $\zeta_w=0$ for all $w \in W$.
  
  Let $w \in W$ be a word starting with $s$.  Assume that $ \zeta_x=0$ for all $x \in W$ satisfying $\ell_s(x) < \ell_s(w)$.  We consider different cases.

  \noindent \textbf{Case 1}. The word $w$ contains $u$.  In this case the double coset $\lang s,t \rang w \lang s,t \rang$ is non-degenerate so that Lemmas \ref{lem:restriction-double-coset} and \ref{lem:adding-letters} can be applied in view of $\tilde \veps_s = 1$ and combined with the induction hypothesis imply that
  \begin{gather*}
    \zeta_w = q_s^{\frac{1}{2}} \zeta_{sw} = 0 %
    \eqstop
  \end{gather*}

  \noindent \textbf{Case 2}. The word $w$ does not contain $u$, but it does contain $t$.  In this case we have $|uwu| = |w| + 2$, so that Lemmas \ref{lem:commutation-relation} and \ref{lem:adding-letters} say
  \begin{gather*}
    \zeta_w = \zeta_{uwu} - p_u \zeta_{wu}
    \eqstop
  \end{gather*}
If $[s,u] = e$,  both $uwu$ and $wu$ start with $s$, so that we can conclude thanks to Case 1.
On the other hand if  $[s,u] \neq e$, then still $wu$ starts with $s$, but $uwu$ does not.  The double coset $\lang s,u \rang uwu \lang s,u \rang$ is non-degenerate and since $w$ contains $t$, the number of occurences of $s$ and $u$ in $uwu$ and $susu(uwu)$ is the same.  So Lemmas \ref{lem:restriction-double-coset} and \ref{lem:adding-letters} apply and say that
 \begin{gather*}
   \zeta_{uwu}
   = \zeta_{susu (uwu)} = \zeta_{suswu}
    \eqstop
  \end{gather*}
  We can thus invoke Case 1 again to infer $ \zeta_w=0$.

  \noindent \textbf{Case 3}. The word $w$ does not contain $u$ nor $t$.  Since $w$ contains $s$, we observe that $|twt| = |w| + 2$.  So Lemma \ref{lem:commutation-relation} says that
  \begin{gather*}
    \zeta_w = \zeta_{twt} - p_t \zeta_{wt}
    \eqstop
  \end{gather*}
  Directly, Case 2 implies that $\zeta_{wt} = 0$. If we can also show that $ \zeta_{twt}=0$ then $\zeta_w=0$ follows.  In order to show $ \zeta_{twt}=0$, we notice that $\lang t,u \rang twt \lang t,u \rang$ is non-degenerate, since $w$ contains $s$.  If $q_t \leq q_u$,  then Lemma \ref{lem:hecke-ev-exists} applied to $W|_{S \setminus s}$ implies that $\veps_t = 1$ for all $\veps \in \{-1,1\}^{S \setminus s}$ such that $c_\veps \neq 0$.  So a combination of Lemma \ref{lem:restriction-double-coset} and Lemma \ref{lem:adding-letters} shows that
  \begin{gather*}
    \zeta_{twt} = q_t^{\frac{1}{2}} \zeta_{wt} = 0 %
    \eqstop
  \end{gather*}
  If $q_u \leq q_t$ then an appeal to Lemmas \ref{lem:commutation-relation}, \ref{lem:restriction-double-coset} and \ref{lem:adding-letters} shows that
  \begin{gather*}
    \zeta_{twt} = q_u^{-\frac{1}{2}} \zeta_{twtu} = q_u^{-\frac{1}{2}} \zeta_{wtut} = 0 %
    \eqstop
  \end{gather*}
  This finishes the considerations of Case 3.

  Let us now consider $w \in W$ that contains $s$, but does not start with $s$.  We assume that $\zeta_x =0$ for all $x \in W$ such that $\ell_s(x) = \ell_s(w)$ and $\pos_s(x) < \pos_s(w)$.  There is $v \in S$ such that $\pos_s(vw) < \pos_s(w)$. In particular, $w$ starts with $v$.  In case $w$ also ends with $v$, Lemmas \ref{lem:commutation-relation} and \ref{lem:adding-letters} imply that
  \begin{gather*}
    \zeta_w = \zeta_{vwv} - p_v \zeta_{vw} = 0 %
    \eqstop
  \end{gather*}
  In case $w$ does not end with $v$, the same lemmas imply that
  \begin{gather*}
    \zeta_w = \zeta_{vwv} = 0 %
    \eqstop
  \end{gather*}
\end{proof}

\begin{lemma}
  \label{lem:remove-edge}
  Let $(W,S)$ be an irreducible right-angled Coxeter system, let $\qm \in (0,1]^S$ and let $r \in [1,\infty)$.  Assume that there are pairwise different letters $s,t,u \in S$ such that
  \begin{itemize}
  \item $[s,t] \neq e$ and $[u,t] \neq e$;
  \item $\forall r \in S \setminus t: \, [r,s] = e$;
  \item $q_s \geq q_t$;
  \item every Hecke central vector of $W|_{S \setminus s}$ in $\lp(W|_{S \setminus s})$ is a linear combination of Hecke eigenvectors and $\delta_e$.
  \end{itemize}
  Then every Hecke central vector for the multiparameter $\qm$ in $\lp(W)$ is a linear combination of Hecke eigenvectors and $\delta_e$.
\end{lemma}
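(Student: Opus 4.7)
The plan is to adapt the strategy of Lemma \ref{lem:remove-arrows-head} to the new hypotheses $q_s \geq q_t$ and $[s, r] = e$ for all $r \in S \setminus t$. First I invoke the induction hypothesis on $W|_{S \setminus s}$ to decompose $\xi|_{W|_{S \setminus s}} = \sum_\vepsm c_\vepsm \eta_\vepsm + c_e \delta_e$, with $\eta_\vepsm$ Hecke eigenvectors of $W|_{S \setminus s}$ indexed by $\vepsm \in \{-1,1\}^{S \setminus s}$.  Because $q_s \geq q_t$, I extend each $\eta_\vepsm$ to a formal Hecke eigenvector $\eta_{\tvepsm}$ of $W$ by prescribing $\tveps_s = -1$ and $\tveps_r = \veps_r$ for $r \neq s$; this choice aligns the extended eigenvector with the twisted mode appearing in Lemma \ref{lem:restriction-double-coset} in its case $q_s > q_t$.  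Setting $\eta = \sum_\vepsm c_\vepsm \eta_{\tvepsm} + c_e \delta_e$ and $\zeta = \xi - \eta$ as a formal series, one obtains a formal Hecke-central object vanishing on $W|_{S \setminus s}$ to which the recurrences of Lemmas \ref{lem:commutation-relation}, \ref{lem:restriction-double-coset}, and \ref{lem:adding-letters} continue to apply.

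The bulk of the proof establishes $\zeta_w = 0$ for all $w \in W$, by double induction on $\ell_s(w)$ (outer) and $\pos_s(w)$ (inner).  The base $\ell_s(w) = 0$ holds by construction.  The sub-base $\pos_s(w) = 1$, where $w$ admits a reduced expression beginning with $s$, is the principal case.  I let $w_0$ denote the shortest element of the double coset $D w D$ with $D = \lang s, t \rang$, write $w = d_1 w_0 d'_1$ with $d_1, d'_1 \in D$, and note that $d_1$ must contain at least one $s$, so $\ell_s(w_0) < \ell_s(w)$ and hence $\zeta_{w_0} = 0$ by the outer induction.  When $DwD$ is non-degenerate, Lemma \ref{lem:restriction-double-coset} expresses $\zeta_w$ as $\zeta_{w_0}$ times a specific affine combination in an unknown constant $c \in \CC$, which immediately forces $\zeta_w = 0$.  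The inner inductive step $\pos_s(w) \geq 2$ is handled exactly as in Lemma \ref{lem:remove-arrows-head}: choosing an initial letter $v \in S$ of $w$, Lemma \ref{lem:commutation-relation} reduces $\zeta_w$ to combinations of $\zeta_{vwv}$ and $\zeta_{vw}$, both of strictly smaller $\pos_s$.

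The principal obstacle, which does not occur in Lemma \ref{lem:remove-arrows-head}, is the degenerate case of $DwD$, where $w_0$ commutes with $D$.  The strong commutation hypothesis forces $w_0$ to lie in the subgroup generated by letters commuting with $t$ and different from $s$, and one writes $w = d w_0 = w_0 d$ with $d \in D$ beginning with $s$.  The restriction $\zeta|_{w_0 D}$ is then a formal series on $D$ satisfying the $D$-centrality relations with boundary value $\zeta_{w_0} = 0$, which is not enough to force its vanishing a priori.  To conclude, I apply Lemma \ref{lem:restriction-double-coset} to an auxiliary non-degenerate double coset $\lang t, u \rang w' \lang t, u \rang$ obtained from $w$ by inserting or removing a $u$-letter (possible because $[s, u] = e$ and $[t, u] \neq e$), combined with the relation $(\eta_{\tvepsm})_{sv} = -q_s^{-1/2} (\eta_{\tvepsm})_v$ (for $|sv| > |v|$) supplied by Lemma \ref{lem:adding-letters} with $\tveps_s = -1$, and the commutation identities of Lemma \ref{lem:commutation-relation}.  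The net effect is an identity $\zeta_w = -q_s^{1/2} \zeta_{sw}$; since $\ell_s(sw) < \ell_s(w)$, the outer induction then finishes the argument.  Orchestrating this coefficient matching while preserving the $\ell_s$-depth under the auxiliary modifications, where $[s, r] = e$ for $r \neq t$ is essential, is the most delicate part of the proof.
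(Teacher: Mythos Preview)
Your non-degenerate case contains a genuine gap.  You claim that ``Lemma~\ref{lem:restriction-double-coset} expresses $\zeta_w$ as $\zeta_{w_0}$ times a specific affine combination'', but that lemma applies only to $\xi \in \lp(W)$, not to the formal series $\zeta = \xi - \sum_\vepsm c_\vepsm \eta_{\tvepsm} - c_e\delta_e$.  For the conclusion to transfer to $\zeta$, each $\eta_{\tvepsm}$ would have to obey the same double-coset law on $D = \langle s,t\rangle$.  With $q_s > q_t$ the admissible modes in Lemma~\ref{lem:restriction-double-coset} are $(\veps_s,\veps_t) = (1,1)$ and $(-1,1)$; your choice $\tveps_s = -1$ forces $\eta_{\tvepsm}$ into mode $(-1,\tveps_t)$, so any $\vepsm$ with $\veps_t = -1$ contributes the forbidden mode $(-1,-1)$.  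There is nothing in the hypotheses forcing $c_\vepsm = 0$ for such $\vepsm$.  Concretely, for $\ell_s(w) = 1$ your outer hypothesis gives $\zeta_{w_0} = \zeta_{tw_0} = \zeta_{w_0t} = \zeta_{tw_0t} = 0$, which pins down only two of the three mode coefficients and leaves $\zeta_{sw_0}$ undetermined.  The case $q_s = q_t$ is worse: the only admissible mode for $\xi$ is $(1,1)$, while every one of your eigenvectors sits in $(-1,\pm 1)$.  This is the essential asymmetry with Lemma~\ref{lem:remove-arrows-head}: there $q_s \le q_t$ puts both admissible modes on $\veps_s = 1$, so removing an initial $s$ scales \emph{every} term by $q_s^{-1/2}$ and $\zeta_w = q_s^{1/2}\zeta_{sw}$ is honest.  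Here the admissible modes for $\xi$ disagree on $\veps_s$, so no single choice of $\tveps_s$ aligns the eigenvectors with $\xi$.

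The paper circumvents this by a different mechanism.  Instead of matching $\xi$ only on $W|_{S\setminus s}$, it passes to the quotient $H = W/[s,t] \cong W|_{S\setminus s} \oplus \ZZ/2\ZZ$ via a bijection $\varphi$ from the set $G$ of words with at most one $s$, verifies (using the $\langle s,t\rangle$-coset law only to permute letters without changing $s,t$-counts) that $\varphi_*(\xi|_G)$ is Hecke central in $H$, and then expands in eigenvectors indexed by \emph{all} of $\{-1,1\}^S$.  This matches $\xi$ on $G$ rather than just on $W|_{S\setminus s}$, giving exactly the extra degree of freedom your approach is missing; the subsequent induction then relies on $\langle t,u\rangle$-double cosets at the key step.
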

\begin{proof}
  Write $H = W/[s,t]$, so that $H$ is a reducible right-angled Coxeter group, decomposing as a product $ W|_{S \setminus s} \oplus \ZZ/2$.  Since the space $\lp(\ZZ/2)$ is spanned by Hecke eigenvectors, the assumption implies that every Hecke central vector of $H$ in $\lp(H) \cong \lp(W|_{S \setminus s}) \ot \lp(\ZZ/2)$ is a linear combination of Hecke eigenvectors, $\delta_e$ and $\delta_s$.  Consider the set of all words containing at most one instance of $s$, which, if it exists, must necessarily be located at the beginning of the word.
  \begin{gather*}
    G = \{ w\in W \mid \ell_s(w) \leq 1 \text{ and } \pos_s(w) \leq 1 \}
    \eqstop
  \end{gather*}
  The quotient map $W \to H$ restricts to a bijection $\vphi: G \to H$.

  Let $\xi \in \lp(W)$ be Hecke central.  We claim that $\vphi_*(\xi|_G) \in \lp(H)$ is Hecke central too.  By Lemma~\ref{lem:commutation-relation} we have to check that
  \begin{align*}
    \vphi_*(\xi|_G)_{rwr} & = \vphi_*(\xi|_G)_w + p_r \vphi_*(\xi|_G)_{wr} \\
    \vphi_*(\xi|_G)_{rw} & = \vphi_*(\xi|_G)_{wr}
  \end{align*}
  hold for all $w \in H$ and $r \in S$ such that $|rwr| = |w| + 2$.  We first notice if $r \in S$ satisfies the last equality of word lengths for some $w \in H$, then $r \neq s$.  Further, if $r \neq t$, then the assumption $[s,r] = e$ implies that $\vphi^{-1}(rw) = r\vphi^{-1}(w)$ and thus
  \begin{align*}
    \vphi_*(\xi|_G)_{rwr} & = \xi_{\vphi^{-1}(rwr)} = \xi_{r\vphi^{-1}(w)r} \\
    \vphi_*(\xi|_G)_{wr} & = \xi_{\vphi^{-1}(w)r} \\
    \vphi_*(\xi|_G)_{rw} & = \xi_{r\vphi^{-1}(w)}
                           \eqstop
  \end{align*}
  Since $|r \vphi^{-1}(w) r| - |\vphi^{-1}(w)| = |rwr| - |w|$ (as can be checked using the two possible forms of $w \in H$), Lemma \ref{lem:commutation-relation} applies inside $W$.  So it remains to check the conditions of Lemma \ref{lem:commutation-relation}  for $w \in H$ and $r = t$.  If $w \in H$ does not contain $s$, then $\vphi^{-1}(tw) = t \vphi^{-1}(w)$ and the previous argument applies.  So consider $w \in H$ containing $s$ and assume that $|twt| = |w| + 2$ in $H$.  This implies that $w$ contains some letter $v \neq s$ such that $[t,v] \neq e$.  So the double coset $\lang s,t \rang \vphi^{-1}(w) \lang s,t \rang$ is non-degenerate in $W$.  Further, we observe that the letter $t$ is not cancelled in the expression $sts\vphi^{-1}(w)$, so that there is an equal number of occurrences of $s$ and $t$ when comparing $sts\vphi^{-1}(w)$ with $\vphi^{-1}(tw)$.  Hence, Lemma \ref{lem:restriction-double-coset} implies that
  \begin{gather*}
    \xi_{\vphi^{-1}(tw)} = \xi_{sts\vphi^{-1}(w)} = \xi_{t \vphi^{-1}(w)}
    \eqstop
  \end{gather*}
  We can thus infer that
  \begin{align*}
    \vphi_*(\xi|_G)_{tw}
    & =
      \xi_{\vphi^{-1}(tw)} =
      \xi_{t\vphi^{-1}(w)} =
      \xi_{\vphi^{-1}(w)t} =
      \xi_{\vphi^{-1}(wt)} =
      \vphi_*(\xi|_G)_{wt}
      \eqstop      
  \end{align*}
  Similarly, we obtain
  \begin{align*}
    \vphi_*(\xi|_G)_{twt}
    & =
      \xi_{\vphi^{-1}(twt)}  =
      \xi_{tsts\vphi^{-1}(tw)t} =
      \xi_{t \vphi^{-1}(w)t}  =
      \xi_{\vphi^{-1}(w)} + p_t \xi_{\vphi^{-1}(w)t}  =
      \xi_{\vphi^{-1}(w)} + p_t \xi_{\vphi^{-1}(wt)} \\
    & =
      \vphi_*(\xi|_G)_w + p_t \vphi_*(\xi|_G)_{wt}
      \eqstop
  \end{align*}
  So indeed $\vphi_*(\xi|_G)$ is a Hecke central vector.  As argued in the first paragraph, it is thus the linear combination of Hecke eigenvectors.  Thus there are coefficients $c_\vepsm$, $\vepsm \in \{-1,1\}^S$ and $c_e$, $c_s$ such that
  \begin{gather*}
    \vphi_*(\xi|_G) = \sum_\vepsm c_\vepsm \eta_{\vepsm,H} + c_e \delta_e +c_s \delta_s
  \end{gather*}
  where $\eta_{\vepsm,H}$ is the Hecke eigenvector in $\lp(H)$ associated with the sign $\vepsm$.  Writing $\eta_\vepsm$ for the Hecke eigenvector in $\lp(W)$ associated with the same sign, we find that
  \begin{gather*}
    \xi_w
    =
    \vphi_*(\xi|_G)_{\vphi(w)}
    =
    (\sum_\vepsm c_\vepsm \eta_{\vepsm,H} +c_s \delta_s + c_e \delta_e )_{\vphi(w)}
    =
    (\sum_\vepsm c_\vepsm \eta_\vepsm +c_s \delta_s + c_e \delta_e)_w
  \end{gather*}
for all $w \in G$ (note that the second equality follows from an explicit check, remembering that $s\in S$ is viewed above first as an element of $H$, and then as an element of $G\subset W$).  Define  a formal power series $\zeta = \xi - \sum_\vepsm c_\vepsm \eta_\vepsm - c_e \delta_e$. We will show in the remainder of the proof that $ \zeta_w=0$ for all $w \in W$. Begin by noting that as $\zeta_s = c_s$ and $\zeta|_{G \setminus \{s\}} = 0$, we have in particular that $\zeta_{st}=0$, so by Lemmas \ref{lem:restriction-double-coset} and \ref{lem:adding-letters} also $\zeta_{tst}=0$ holds.  Then however Lemmas \ref{lem:commutation-relation} and \ref{lem:adding-letters} imply that $c_s = \zeta_s= \zeta_{tst}=0$.

 Let $w \in W$ be some word such that $\ell_s(w) \geq 1$ and $\pos_s(w) \geq 2$.  Assume that $\zeta_x=0$ for all $x \in W$ such that $\ell_s(x) = \ell_s(w)$ and $\pos_s(x) < \pos_s(w)$.  Let $v$ be a letter of $w$ such that $\pos_s(vw) < \pos_s(w)$.  In particular, $w$ starts with $v$.  If $w$ also ends with $v$, then Lemmas \ref{lem:commutation-relation} and \ref{lem:adding-letters} and the inductive assumption say that
  \begin{gather*}
    \zeta_w = \zeta_{vwv} - p_v \zeta_{vw} =  0 %
    \eqstop
  \end{gather*}
  If $w$ does not end with $v$, then the same lemmas imply that
  \begin{gather*}
    \zeta_w = \zeta_{vwv} =0%
    \eqstop
  \end{gather*}

  Let $w \in W$ be some word with $\ell_s(w) \geq 2$ that starts and ends with $s$.  Assume that $\zeta_x = 0$ for all $x \in W$ such that $\ell_s(x) < \ell_s(w)$.  Invoking Lemmas \ref{lem:commutation-relation} and \ref{lem:adding-letters}, we find that
  \begin{gather*}
    \zeta_w = \zeta_{sws} - p_s \zeta_{ws} = 0 %
    \eqstop
  \end{gather*}

  Let $w \in W$ be some word such that $\ell_s(w) \geq 2$ and starting with $s$ but not ending with $s$. We will now proceed with the induction with respect to the place of last occurrence of $s$, which is $\pos_s(w^{-1})$.  Assume that $\zeta_x=0$ for all $x \in W$ such that $\ell_s(x) = \ell_s(w)$ which start with $s$ and satisfy the condition $\pos_s(x^{-1}) < \pos_s(w^{-1})$.  %
  First consider the case that $w$ ends with some letter $v \neq t$ such that $\pos_s((wv)^{-1}) < \pos_s(w^{-1})$.  If $w$ also starts with $v$, then $\ell_v(w) \geq 2$ and Lemmas \ref{lem:commutation-relation} and \ref{lem:adding-letters} say that
  \begin{gather*}
    \zeta_w = \zeta_{vwv} - p_v \zeta_{wv} = 0 %
    \eqcomma
  \end{gather*}
  since $s$ and $v$ commute.  If $w$ does not start with $v$, then the same lemmas say that
  \begin{gather*}
    \zeta_w = \zeta_{vwv} = 0 %
    \eqstop
  \end{gather*}
  So we remain with the case that $w$ ends with $t$.  We distinguish two cases.  If $q_t \leq q_u$, we observe that $\veps_t = 1$ by Lemma \ref{lem:hecke-ev-exists} applied to $H$.  So we can make use of the fact that the double coset $\lang t,u \rang w \lang t,u \rang$ is non-degenerate and apply Lemmas \ref{lem:restriction-double-coset} and \ref{lem:adding-letters} with the inductive assumption to see that
  \begin{gather*}
    \zeta_w = q_t^{-\frac{1}{2}} \zeta_{wt} = 0 %
    \eqstop
  \end{gather*}
On the other hand if $q_t > q_u$, Lemma \ref{lem:hecke-ev-exists} implies that $\veps_u = 1$.  We can thus invoke Lemmas \ref{lem:restriction-double-coset} and \ref{lem:adding-letters} to assume without loss of generality that $w$ starts with $u$. Further consider the word $stswt$, which arises from $w$ by rotating $t$ from the end to the start passing it past $s$.  Lemmas \ref{lem:restriction-double-coset} and \ref{lem:adding-letters} implies that
  \begin{gather*}
    \zeta_w = \zeta_{stswt} = 0 %
    \eqstop
  \end{gather*}
  
  Let us now take $w \in W$ such that $\ell_s(w) \geq 2$ and $\pos_s(w) \geq 2$.  We assume that $\zeta_x=0$ for all $x \in W$ such that $\ell_s(x) = \ell_s(w)$ and $\pos_s(x) < \pos_s(w)$.  Let $v$ be some letter such that $\pos_s(vw) < \pos_s(w)$.  In particular $v$ is a starting letter of $w$.  If $w$ also ends with $v$, then Lemmas \ref{lem:commutation-relation} and \ref{lem:adding-letters} say that
  \begin{gather*}
    \zeta_w = \zeta_{vwv} - p_v \zeta_{wv} = 0 %
    \eqstop
  \end{gather*}
  If $w$ does not end with $v$, then the same lemmas say that
  \begin{gather*}
    \zeta_w = \zeta_{vwv} =0 %
    \eqstop
  \end{gather*}
\end{proof}

\subsection{Proof of Theorems {\ref{thm:factoriality}} and Corollary {\ref{cor:spherical-representations}}}
\label{sec:proofs}

\begin{theorem}
  \label{thm:hecke-central-vectors-hecke-ev}
  Let $(W,S)$ be an irreducible right-angled Coxeter group with at least three generators.  Let $\qm \in (0,1]^S$ and $r \in [1,\infty)$.  Then every Hecke central vector for the multiparameter $\qm$ in $\lp(W)$ is a linear combination of Hecke eigenvectors and $\delta_e$.
\end{theorem}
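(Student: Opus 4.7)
The plan is to proceed by induction on $|S|$. In the base case $|S|=3$, the Coxeter graph is either a triangle (all pairs non-commuting) or a path $a-b-c$ with $[a,c]=e$, and after relabelling the conclusion follows from one of the three base lemmas: Lemma \ref{lem:rank-three-straight-order} when the middle value of the $q$-parameters sits at a vertex with two non-commuting neighbours (which covers the triangle and the path case with $q_b$ in the middle position), Lemma \ref{lem:rank-three-free-generator-largest} when $q_b$ is maximal, and Lemma \ref{lem:rank-three-free-generator-smallest} when $q_b$ is minimal.

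For the inductive step I assume $|S|\geq 4$ and work with the Coxeter graph $\Gamma$ of $(W,S)$, which is connected by irreducibility. The goal is to produce a vertex $s\in S$ together with neighbours $t,u$ to which either Lemma \ref{lem:remove-arrows-head} or Lemma \ref{lem:remove-edge} applies; for the induction hypothesis on $W|_{S\setminus s}$ to be available, I need $W|_{S\setminus s}$ to remain irreducible on $|S|-1\geq 3$ generators, which amounts to asking that $s$ be a non-cut vertex of $\Gamma$. The block-cut decomposition of $\Gamma$ furnishes such a vertex.

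If $\Gamma$ is $2$-connected, every vertex is non-cut; I choose $s\in S$ of minimal $q$-value, so that every neighbour $t$ automatically satisfies $q_s\leq q_t$, and pick $u$ to be a second neighbour of $t$ (available because every vertex has degree at least $2$), whence Lemma \ref{lem:remove-arrows-head} applies. Otherwise I fix a leaf block $B$ of the block-cut tree and let $c$ be its unique cut vertex. When $|B|\geq 3$, I take $s\in B\setminus\{c\}$ of minimal $q$-value within $B\setminus\{c\}$; the $2$-connectedness of $B$ forces $s$ to have a neighbour $t\in B\setminus\{c\}$ (hence $q_s\leq q_t$ by minimality) and forces $t$ to have a further neighbour $u\in B$ different from $s$, so Lemma \ref{lem:remove-arrows-head} applies. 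When $|B|=2$, $B=\{s,c\}$ is a pendant edge and $s$ is a leaf of $\Gamma$; since $c$ is a cut vertex it has a neighbour $u\neq s$ in $\Gamma$, and depending on the comparison of $q_s$ and $q_c$ I invoke Lemma \ref{lem:remove-arrows-head} (if $q_s\leq q_c$, with $t=c$) or Lemma \ref{lem:remove-edge} (if $q_s\geq q_c$), the leaf property of $s$ supplying the remaining hypothesis of the latter.

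The main obstacle is ensuring that $W|_{S\setminus s}$ stays irreducible throughout the induction: if $s$ were a cut vertex, $W|_{S\setminus s}$ would split as a non-trivial direct product, and central vectors of the form $\delta_v\otimes\delta_e$ with $v$ in one of the factors would typically fail to be linear combinations of Hecke eigenvectors of $W|_{S\setminus s}$ and $\delta_e$, so that the inductive hypothesis would no longer cover the reduced system. The block-cut analysis, combined with the complementary scope of the two reduction lemmas (Lemma \ref{lem:remove-edge} being designed precisely for leaves with $q_s\geq q_t$, which is the only configuration in which $s$ cannot play the role of the low-parameter vertex in Lemma \ref{lem:remove-arrows-head}), produces a suitable choice of $s$ in every configuration and thereby completes the induction.
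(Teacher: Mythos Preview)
Your argument is correct and follows the same inductive scheme as the paper: the rank-three base case is handled by the three lemmas you cite, and the inductive step appeals to Lemma~\ref{lem:remove-arrows-head} or Lemma~\ref{lem:remove-edge} once a non-cut vertex $s$ with a suitable neighbour $t$ (and a further neighbour $u$ of $t$) is found, so that $W|_{S\setminus s}$ remains irreducible on at least three generators and the induction hypothesis applies.

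Where you differ from the paper is in the graph-theoretic argument used to locate such an $s$.  The paper argues via the \emph{diameter} of the Coxeter graph: if $\Gamma$ is complete one removes a vertex of minimal $q$-value; if $\Gamma$ has a leaf one removes it (using Lemma~\ref{lem:remove-arrows-head} or Lemma~\ref{lem:remove-edge} according to the comparison of $q$-values); otherwise one fixes a diametral vertex $o$ and shows that among vertices at maximal distance from $o$ there is an adjacent pair $s\sim s'$ (or, failing that, a vertex $s$ all of whose neighbours lie strictly closer to $o$), either of which can be removed without disconnecting $\Gamma$.  You instead use the \emph{block--cut tree}: in a $2$-connected graph you remove a vertex of minimal $q$-value; otherwise you pass to a leaf block, which either has three or more vertices (and then $2$-connectedness of the block furnishes the required configuration away from the unique cut vertex) or is a pendant edge (and then its endpoint is a leaf of $\Gamma$, handled exactly as in the paper's leaf case).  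Your organisation is a clean application of a standard structural tool and makes the ``non-cut'' requirement entirely transparent; the paper's diameter argument avoids the block--cut machinery but is somewhat more ad hoc.  Either way, the same two reduction lemmas do the real work.
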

\begin{proof}
  We prove this theorem by induction on the rank of $W$.  The case of three generators is covered by Lemmas \ref{lem:rank-three-straight-order}, \ref{lem:rank-three-free-generator-largest} and \ref{lem:rank-three-free-generator-smallest}.  So assume that $|S| = n \geq 4$ and that every Hecke central vector for an irreducible Coxeter group of rank at most $n - 1$ is a linear combination of Hecke eigenvectors and $\delta_e$.

  Denote by $\Gamma$ the Coxeter graph of $(W,S)$ and by $d = \mathrm{diam}(\Gamma)$ the diameter of $\Gamma$. For $s, s' \in S$ we write $s \sim s'$ if $\Gamma$ contains an edge connecting $s$ with $s'$.

  \noindent \textbf{Case 1}: $d = 1$. In this case, $\Gamma$ is a complete graph.  So we can find three pairwise different vertices $s,t,u$ of $\Gamma$ such that $q_s \leq q_t$.  Thanks to the induction hypothesis, Lemma \ref{lem:remove-arrows-head} applies.

  \noindent \textbf{Case 2}: $d \geq 2$. In this case we will distinguish several possibilities.
  
   \noindent \textbf{Subcase 2.1}: Suppose that there exists a vertex $s\in S$ which is a leaf, i.e.\ it has degree $1$. Fix its unique adjacent vertex $t$. %
  If $q_s \leq q_t$, we can apply Lemma \ref{lem:remove-arrows-head} to conclude the induction.  Otherwise, Lemma \ref{lem:remove-edge} can be applied. %

  \noindent \textbf{Subcase 2.2}: Suppose that $\Gamma$ admits no leaves and there exists a triple of vertices $o,s, s'\in S$ such that $s \sim s'$, the distance from $o$ to $s$ equals $d$, and so does the distance from $o$ to $s'$. Then $\Gamma$ remains connected upon removing either of the vertices $s$ or $s'$, as follows from the fact that $d$ is the diameter of $\Gamma$.  Hence the induction hypothesis ensures that Lemma \ref{lem:remove-arrows-head} can be applied in a way depending on whether $q_s \leq q_{s'}$ or $q_{s'} \leq q_s$.

  \noindent \textbf{Subcase 2.3}: Suppose that $\Gamma$ admits no leaves and we have a vertex $o \in S$ such that for every vertex $s \in S$ lying at distance $d$ from $o$ all the neighbours of $s$ lie at distance $d-1$ from $o$. Fix then such $s\in S$ and one of its neighbours, $s'\in S$.  Using the fact that every vertex at distance $d$ from $o$ has at least two neighbours at distance $d-1$ from $o$, we see that $\Gamma$ remains connected upon removing either of the vertices $s$ or $s'$.  Hence again the induction hypothesis ensures that Lemma \ref{lem:remove-arrows-head} can be applied in a way depending on whether $q_s \leq q_{s'}$ or $q_{s'} \leq q_s$.

\end{proof}

\begin{proof}[Proof of Theorem \ref{thm:factoriality}] Denote the canonical duality between $\ell^{r'}(W)$ and $\ell^r(W)$ by $\langle \cdot, \cdot \rangle$.
Lemma \ref{commutants} and  Proposition \ref{prop:hecke-eigenvectors} imply that for $\vepsm \in \tilde{C}$  the natural projections onto Hecke eigenvectors in $\lp(W)$, i.e.\ operators of the form 
  $P_\vepsm = \langle \eta_{\vepsm}, \cdot \rangle \eta_{\vepsm}$
   lie in $\cZ(\cN_\qm^r(W))$ (note that the definition of $\tilde{C}$ forces $\eta_{\vepsm} \in \ell^r(W) \cap \ell^{r'}(W)$).  Conversely, if $x \in \cZ(\cN_\qm^r(W))$, then Lemma~\ref{commutants} shows  that $\hat x \in \lp(W)$ is a Hecke central vector. By Theorem \ref{thm:hecke-central-vectors-hecke-ev} it must be a linear combination of Hecke eigenvectors $\eta_\vepsm$  and $\delta_e$.  By Lemma \ref{lem:hecke-ev-exists}, each Hecke eigenvector appearing in this linear combination is an element of $\lp(W)$, so that $\vepsm \in C$. Subtracting from $x$ an appropriate multiple of the identity operator we may assume that $\hat{x} = \sum_{\veps \in C} \alpha_\vepsm \eta_{\vepsm}$. Using now the fact that $x$ commutes with Hecke translations and each $\eta_\vepsm$ is a Hecke eigenvector we see that $x$ must be in fact finite rank, and more specifically of the form
  \[ x (\theta) = \sum_{\vepsm \in C} \langle \xi_{\vepsm}, \theta \rangle \eta_\vepsm, \;\;\; \theta \in \ell^r(W), \]
  where $\xi_{\vepsm}\in \ell^{r'}(W)$ for each $\vepsm \in C$. 
Consider now the adjoint operator $x^*:\ell^{r'}(W)\to \ell^{r'}(W)$. It has the form $x^*(\cdot) =  \sum_{\vepsm \in C} \langle \cdot, \eta_{\vepsm} \rangle \xi_\vepsm$. Recall that the duality is compatible with the action of  the respective Hecke operators -- these associated to elements of $S$ are suitably `selfdual', i.e.\ $(\pi^r_{\qm} (T_s^{(\qm)}))^* = \pi^{r'}_{\qm} (T_s^{(\qm)})$. This, together with the linear independence of $\eta_\vepsm$,
 shows via a direct computation that as a consequence of the commutation relation $x^* \pi^{r'}_{\qm} (T_s^{(\qm)}) = \pi^{r'}_{\qm} (T_s^{(\qm)})x^*$ in fact each $\xi_{\vepsm}$ is either $0$ or a Hecke eigenvector with the same weight as $\eta_{\vepsm}$. This ends the proof -- and explains the appearance of the new set $\tilde{C}$ in the formulation of the theorem.
\end{proof}

We will next use the relation between representations of groups acting on right-angled buildings and generic Hecke von Neumann algebras described in Section \ref{sec:buildings}.  Let us first observe that the identification of Hecke algebras made in Theorem \ref{thm:iwahori-hecke-algebra-is-generic} extends to an identification of Hecke von Neumann algebras, before we prove Corollary \ref{cor:spherical-representations}.
\begin{theorem}
  \label{thm:iwahori-hecke-vn-algebra-is-generic}
    Let $X$ be a locally finite, thick building of type $(W,S)$ and $G \leq \Aut(X)$ be a closed, strongly transitive and type preserving subgroup.  Let $B \leq G$ be an Iwahori subgroup of $G$ and denote by $(d_s)_{s \in S}$ the thickness of $X$.  Let $q_s = d_s^{-1}$, for $s \in S$.  Then $\cN_\qm(W) \cong \rL[G, B]$.
\end{theorem}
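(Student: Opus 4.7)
The plan is to upgrade the $*$-algebra isomorphism $\vphi : \CC_\qm[W] \to \CC[G,B]$ provided by Theorem~\ref{thm:iwahori-hecke-algebra-is-generic} to a normal $*$-isomorphism of the von Neumann algebraic completions. The key observation is that both $\cN_\qm(W)$ and $\rL(G,B)$ are finite von Neumann algebras in standard form, with distinguished cyclic and separating tracial vectors. Hence it suffices to check that $\vphi$ is trace-preserving with respect to the two canonical traces; the unitary implementation and the extension will then follow from the standard GNS machinery. Throughout, normalise the Haar measure on $G$ so that $\mu(B)=1$, in accordance with the convention fixed in Section~\ref{sec:representations-locally-compact}; this makes $p_B = \mathbb{1}_B$ a self-adjoint projection of $\Ltwo$-norm~$1$ and the algebra unit of $\CC[G,B]$.

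First I would identify the two traces. On the generic side, the formula $\tau_1(T_w^{(\qm)}) = \langle T_w^{(\qm)} \delta_e, \delta_e \rangle = \delta_{w,e}$ is recalled in Section~\ref{sec:generic-hecke-algebras}. On the Iwahori side, the vector state $\tau_2(x) = \langle x p_B, p_B \rangle_{\Ltwo(G)}$ restricts to $\rL(G,B) = p_B \rL(G) p_B$ as a faithful tracial state, and a short convolution computation gives $\tau_2(\mathbb{1}_{BwB}) = (\mathbb{1}_{BwB} * \mathbb{1}_B)(e) = \mu(B \cap BwB) = \delta_{w,e}$. Combining this with $\vphi(T_w^{(\qm)}) = (-1)^{|w|}\sqrt{q_w}\, \mathbb{1}_{BwB}$ yields $\tau_2 \circ \vphi = \tau_1$ on $\CC_\qm[W]$.

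Next I would produce the spatial implementation. The Bruhat cell $BwB$ decomposes into $d_w := d_{s_1}\dotsm d_{s_n}$ left $B$-cosets for any reduced expression $w = s_1\dotsm s_n$, so $\mu(BwB) = d_w = q_w^{-1}$. Consequently $\|\vphi(T_w^{(\qm)}) p_B\|_2^2 = q_w \mu(BwB) = 1 = \|T_w^{(\qm)} \delta_e\|_2^2$, and the map $U : \delta_w \mapsto (-1)^{|w|} \sqrt{q_w}\, \mathbb{1}_{BwB}$ extends to an isometric linear map $\ltwo(W) \to p_B \Ltwo(G)$. By $B$-biinvariance, $\{\mu(B)^{-1/2}\mathbb{1}_{BwB / B}\text{-cosets}\}$ spans $p_B\Ltwo(G)$, and a bookkeeping argument using $\vphi$ on finite sums shows $U$ has dense range, hence is unitary; alternatively one invokes the GNS uniqueness theorem for tracial $*$-algebras directly, which from the trace-preservation step above produces $U$ automatically.

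Finally, the relation $U \pi_\qm^2(a) U^* = \vphi(a)|_{p_B\Ltwo(G)}$ for $a \in \CC_\qm[W]$ is verified on generators (and follows from the intertwining property of the GNS unitary). Taking strong closures on both sides converts $\vphi$ into a normal $*$-isomorphism $\cN_\qm(W) \to \rL(G,B)$. The only subtlety worth flagging is arithmetic rather than conceptual: one must carefully track the Haar measure normalisation and the scalars $(-1)^{|w|}\sqrt{q_w}$ so that both $\tau_2\circ\vphi = \tau_1$ and the isometry identity for $U$ hold simultaneously; this is where the choice $q_s = d_s^{-1}$ becomes essential.
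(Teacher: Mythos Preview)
Your proposal is correct and follows essentially the same approach as the paper: both start from the $*$-isomorphism $\vphi$ of Theorem~\ref{thm:iwahori-hecke-algebra-is-generic}, verify that $\vphi$ intertwines the canonical tracial vector states $T_w^{(\qm)} \mapsto \delta_{w,e}$ and $x \mapsto \langle x\, \mathbb{1}_B, \mathbb{1}_B\rangle$, and conclude via GNS uniqueness. The paper's proof compresses this into three sentences, while you spell out the spatial implementation and the Haar-measure bookkeeping more explicitly, but there is no substantive difference in strategy.
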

\begin{proof}
  Denote by $\vphi$ the isomorphism $\CC_\qm[W] \to \CC[G,B]$ from Theorem \ref{thm:iwahori-hecke-algebra-is-generic}.  The Hecke von Neumann algebra $\cN_\qm(W)$ is the completion of $\CC_\qm[W]$ in the GNS-representation associated with the tracial state satisfying $T_w^{(\qm)} \mapsto \delta_{w,e}$.  This state is recovered as the composition of $\vphi$ with the tracial vector state on $\CC[G,B]$ associated with $\delta_{Be} \in \ltwo(B \backslash G)$.  So we find that $\vphi$ extends to a *-isomorphism $\rL(G,B) \cong \cN_\qm(W)$.
\end{proof}

\begin{proof}[Proof of Corollary \ref{cor:spherical-representations}]
  Since $\lambda_{G,B}(G)' = \rR(G,B) \cong \rL(G,B)$, a combination of Theorem \ref{thm:iwahori-hecke-vn-algebra-is-generic} and Theorem \ref{thm:factoriality} applies to describe the structure of the quasi-regular representation.  Write
  \begin{gather*}
    C = \{ \vepsm \in \{-1,1\}^S \mid \sum_{w \in W} d^{-1}_{w, \vepsm} \text{ is absolutely summable}\}
    \eqstop
  \end{gather*}
  Let us directly observe that $\sum_{w \in W} d_w^{-1} = \infty$ implies $C = \emptyset$.  We find that $\lambda_{G,B} = \pi \oplus \bigoplus_{\vepsm \in C} \mathrm{St}_\vepsm$ for a semi-finite factor representation $\pi$ and pairwise unitarily inequivalent, irreducible representations $\mathrm{St}_\vepsm$, where $\mathrm{St}_\vepsm$ is the restriction of $\lambda_{G,B}$ to the invariant subspace $p_\vepsm L^2(B\backslash G)$, with $p_\veps$ being the projection appearing in Theorem \ref{thm:factoriality}.  We have to show that neither $\pi$ nor $\mathrm{St}_\vepsm$, $\vepsm \in C$ can be finite representations.  We observe that $\mathrm{St}_\vepsm$ are subrepresentations of $\lambda_{G,B}$ and hence infinite dimensional.  Indeed, $B$ is compact since $X$ is locally finite and $G$ is non-compact as it acts (strongly) transitively, so that $\lambda_{G, B}$ is a mixing representation.  Let us now show that $\pi$ is of type ${\rm II}_\infty$.  For a contradiction, let us assume that it is of type ${\rm II}_1$, hence finite.  By \cite[Theorem 15.D.5]{bekkadelaharpe19}, applied to the quotient $G/\ker \pi$ (whose points are separated by our finite type representation), $G/\ker \pi$ has a compact open normal subgroup, say $L$.  Its preimage $N \unlhd G$ must satisfy $NB = G$ by \cite[Proposition 2.5]{tits64} (see alternatively \cite[Lemma 6.61]{abramenkobrown08}).  So $N$ has finite index in $G$, since $G/N \cong B/(B \cap N)$ and $B \cap N$ is open inside the compact group $B$.  So also $L = N / \ker \pi$ has finite index inside $G/\ker \pi$ which implies that the latter group is compact.  We reach a contradiction, since compact groups have no type ${\rm II}$ representations.

Let us assume that $(d_s)_{s \in S}$ is constant and we will show that $\lambda_{G,K}(G)''$ is a factor.  We observe that $C = \emptyset$ or $C = \{\mathbf{1}\}$.  The former case occurs if and only if $\sum_{w \in W} q_{w, \mathbf{1}} = \infty$, and then already $\lambda_{G,B}$ is a factor representation.  Since $\Ltwo(G/K)$ is a $G$-invariant subspace of $\Ltwo(G/B)$ is follows that $\lambda_{G,K}(G)''$ is a quotient of $\lambda_{G,B}(G)''$. Since the latter is a factor, we find that $\lambda_{G,K}(G)'' \cong \lambda_{G,B}(G)''$ is a factor too.  So let us assume that $\sum_{w \in W} q_{w, \mathbf{1}} < \infty$ and write $\St = \St_{\mathbf{1}}$.  We will show that the irreducible representation $\mathrm{St}$ does not admit any non-zero $K$-invariant vector.  We may assume that $B \leq K$ and pick $s \in S$ such that $BsB \subset K$.  Indeed, if $B$ is the stabiliser of $C$, we can pick $K$ to be the stabiliser of a vertex adjacent to the facet labelled by $s$.  Denote by $p$ the minimal projection in $\rL(G,B)$ projecting onto the representation space of $\St$.  It suffices to show that $p_K p = 0$.  Combining the isomorphism of Hecke algebras Theorem \ref{thm:iwahori-hecke-algebra-is-generic} with the calculation of eigenvalues for $T_s^{(\qm)}$ in Proposition~\ref{prop:hecke-eigenvectors}, we find that
  \begin{gather*}
    \mathbb{1}_{BsB} p = -p
    \eqstop
  \end{gather*}
  On the other hand $\mathbb{1}_{BsB} \, p_K = p_K$ follows from the definition of the convolution product on $\contc(G)$, since $BsB \subset K$.  This shows that $p_K p = 0$ must hold.  This shows that $\rL(G,K) = p_K \rL(G, B) p_K$ is a factor and thus also $\lambda_{G,K}(G)''$ is a factor.  
\end{proof}

\begin{example}
  \label{ex:spherical-representation}
  The statement of Corollary \ref{cor:spherical-representations} leaves open the possibility that for a given $G$ an irreducible Iwahori-spherical square integrable representation exists.  For large enough thickness of the underlying building, such a representation will always exist according to Corollary \ref{cor:spherical-representations}.  Indeed, if $(W, S)$ is a Coxeter group and $X$ is the building of type $(W, S)$ and thickness $(d_s)_{s \in S}$, then a closed, strongly transitive and type preserving subgroup of $\Aut(X)$ admits an irreducible Iwahori-spherical square integrable representation if and only if $\sum_{w \in W} q_w < \infty$, where $q_s = d_s^{-1}$.  This applies in particular to the full group of type-preserving automorphisms $\Aut(X)^+$, which has finite index in $\Aut(X)$.

  \begin{itemize}
  \item We first discuss one example in which there always is an irreducible direct summand of $\lambda_{G, B}$.  Let $W = \ZZ/2\ZZ^{ \oplus 2} * \ZZ/2\ZZ$ and $S = \{s,t,u\}$ the set of its standard generators where $u$ is chosen to generate the free copy of $\ZZ/2\ZZ$.  Enumerating elements of $W$ according their $u$-length, one finds that
    \begin{gather*}
      \sum_{w \in W} q_w
      =
      (1 + q_s + q_t + q_{st}) + (1 + q_s + q_t + q_{st})^2 \sum_{n \in \NN_{\geq 1}} (q_s + q_t + q_{st})^{n-1} q_u^n 
      \eqstop
    \end{gather*}
    It follows that $\sum_{w \in W} q_w < \infty$ if and only if $q_u \left( (1 + q_s)(1 + q_t) - 1 \right ) < 1$.  The last inequality is equivalent to $\frac{(1 + d_s)(1 + d_t)}{d_s d_t} < 1 + d_u$, which holds for all allowable thicknesses, since $d_u, d_s, d_t \geq 2$.

    \item Consider then the example $W = \ZZ/2\ZZ^{\oplus 2} * \ZZ/2\ZZ^{\oplus 2}$ with generators $S = \{s,t,u,v\}$ chosen such that the pairs $s,t$ and $u,v$ commute.  Reasoning as before, we calculate
    \begin{gather*}
      \sum_{w \in W} q_w
      =
      (1 + q_s + q_t + q_{st}) + (1 + q_s + q_t + q_{st})^2 \sum_{n \in \NN_{\geq 1}} (q_s + q_t + q_{st})^{n-1} (q_u + q_v + q_{uv})^n
      \eqstop
    \end{gather*}
    Hence $\sum_{w \in W} q_w < \infty$ if and only if
    \begin{gather*}
      \left (\frac{(1 + d_s)(1 + d_t)}{d_sd_t} - 1 \right) \left (\frac{(1 + d_u)(1 + d_v)}{d_ud_v} - 1 \right) < 1
      \eqstop
    \end{gather*}
    This inequality is satisfied for example for all thick buildings of type $(W,S)$ such that $d_s, d_t, d_u, d_v \geq 3$.

    \item We finally also find examples where no irreducible Iwahori-spherical discrete series representation exists.  Taking $W = \ZZ/2\ZZ^{*k}$ for $k \geq 4$ with its set of Coxeter generators $S$ there are $k(k-1)^{n-1}$ words of length $n \geq 1$ in $W$.  Hence, for a building of type $(W, S)$ and uniform thickness $d$, we find
    \begin{gather*}
      \sum_{w \in W} q_w
      =
      1 + \sum_{n \in \NN_{\geq 1}} k(k-1)^{n-1}\frac{1}{d^n}
      \eqcomma
    \end{gather*}
    which is summable if and only if $d > k - 1$.
  \end{itemize}
\end{example}

\section{Graph products of unitary representations}
\label{sec:graph-products-representations}

In this section we will develop a systematic point of view on the isomorphism $\CC[W] \cong \CC_\qm[W]$ considered in \cite[p.358, Note 19.2]{davis08} and \cite[Proposition 4.2]{caspersklisselarsen21}.  We will consider a special class of what should be thought of as graph products of representations.  The construction of such representations follows a standard pattern known from free products of Hilbert spaces \cite{avitzou82, voiculescu85}, which in turn was the basis of the introduction of graph product of Hilbert spaces as considered in \cite[Section~2.1]{caspersfima17}.

We will briefly describe the graph product of representations.  Using the notation and terminology of \cite{caspersfima17}, let $\Gamma$ be a simplicial graph, let $G_v$, $v \in V\Gamma$ be a family of groups index by the vertices of $\Gamma$ and denote by $\prod_{v, \Gamma} G_v$ the graph product, obtained as the quotient of the free product $G_\Gamma = *_{v \in \Gamma} G_v$ by the commutation relations $[g,h] = e$ for $g \in G_v$, $h \in G_w$ and $v \sim_\Gamma w$.  Suppose that for every $v \in V\Gamma$ we are given a unitary representation $\pi_v:G_v \to B(H_v)$ and a unit vector $\xi_v \in H_v$. Let $H$ denote the graph product of Hilbert spaces associated to the family $(H_v, \xi_v)_{v \in \Gamma}$. It is then easy to see that using the universal property of $G_\Gamma$ and injective unital *-homomorphisms $\lambda_v:B(H_v) \to B(H)$ described in \cite[Section 2]{caspersfima17} we obtain a representation $\pi_\Gamma: G_\Gamma \to B(H)$ such that $\pi_\Gamma|_{G_v}$ is unitarily equivalent to a multiple of the representation $\pi_v$. The resulting representation does depend on the choice of the vectors $\xi_v$, as noted already in \cite{avitzou82}.

Given a right-angled Coxeter system $(W, S)$ and $s \in S$, we have a natural bijection $W \cong W/\langle s \rangle \times \langle s \rangle$, sending $w\in W$ to $([w],s)$ if $|ws|< |w|$ and to $([w],e)$ if $|ws|> |w|$.   Applying this bijection to the natural basis of $\ltwo(W)$, we obtain a unitary $U_s: \ltwo(W) \to \ltwo(W/\langle s \rangle) \ot \CC^2$. Note that we view $\ZZ/2\ZZ$ as the additive group $\{0,1\}\, \textup{mod} 2$.
\begin{proposition}
  \label{prop:graph-product-representations}
  Let $(W, S)$ be a right-angled Coxeter system and let $\pi_s: \ZZ/2\ZZ \to \cU(2)$, $s \in S$ be a family of unitary representations.  The formula
  \begin{gather*}
    \pi(s) = (U_s)^{-1} (\id_{\ltwo(W/\langle s \rangle)} \otimes \pi_s(1)) U_s
    \qquad
    \text{ for } s \in S 
  \end{gather*}
  defines a unitary representation $\pi: W \to \cU(\ltwo(W))$.
\end{proposition}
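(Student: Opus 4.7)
The plan is to verify the relations defining the right-angled Coxeter group $W$: each generator $s \in S$ must act as an involution, and commuting generators must yield commuting operators. The universal property of $W$ as the group with presentation $\langle S \mid s^2 = e, \, (st)^2 = e \text{ when } [s,t]=e \text{ in } W \rangle$ then delivers the representation $\pi$, while unitarity of each $\pi(s)$ is immediate from the definition as a conjugate of the unitary $\mathrm{id} \otimes \pi_s(1)$.

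The first step is trivial: since $\pi_s: \ZZ/2\ZZ \to \cU(2)$ is a representation, $\pi_s(1)^2 = \pi_s(0) = \mathrm{id}$, hence $\pi(s)^2 = U_s^{-1}(\mathrm{id} \otimes \pi_s(1)^2) U_s = \mathrm{id}$.

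The heart of the argument is the commutation relation $\pi(s)\pi(t) = \pi(t)\pi(s)$ whenever $[s,t]=e$ in $W$. I would prove this by exhibiting a common invariant decomposition of $\ltwo(W)$ on which the two operators act as tensor factors. Concretely, under the hypothesis $[s,t]=e$, the parabolic subgroup $\langle s,t \rangle$ is isomorphic to $\ZZ/2\ZZ \times \ZZ/2\ZZ$ with four elements. By Tits' solution to the word problem, each right coset $C = w_0 \langle s,t \rangle$ has a unique shortest representative $w_0$, characterised by $|w_0 s| > |w_0|$ and $|w_0 t| > |w_0|$, and the coset equals $\{w_0, w_0 s, w_0 t, w_0 st\}$, all of distinct lengths $|w_0|, |w_0|+1, |w_0|+1, |w_0|+2$. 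A short length computation (using $|w_0 st| = |w_0|+2$, which follows from $[s,t]=e$) shows that both $w_0$ and $w_0 t$ lie in $R_s := \{w : |ws|>|w|\}$, and symmetrically $w_0, w_0 s \in R_t$. Hence the pairing induced by $U_s$ on $\ltwo(C)$ is $\{\delta_{w_0},\delta_{w_0 s}\}$ and $\{\delta_{w_0 t}, \delta_{w_0 st}\}$, while the pairing induced by $U_t$ is $\{\delta_{w_0},\delta_{w_0 t}\}$ and $\{\delta_{w_0 s}, \delta_{w_0 st}\}$.

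Consequently, the four-dimensional subspace $\ltwo(C) \subset \ltwo(W)$ is invariant under both $\pi(s)$ and $\pi(t)$, and under the identification $\ltwo(C) \cong \CC^2 \otimes \CC^2$ induced by ordering the basis as $(\delta_{w_0},\delta_{w_0 s},\delta_{w_0 t},\delta_{w_0 st})$, the operator $\pi(s)|_{\ltwo(C)}$ becomes $\pi_s(1) \otimes \mathrm{id}$ and $\pi(t)|_{\ltwo(C)}$ becomes $\mathrm{id} \otimes \pi_t(1)$. These tensor factors commute, giving the desired relation on each $\langle s,t\rangle$-coset, hence on all of $\ltwo(W) = \bigoplus_C \ltwo(C)$.

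The main obstacle is purely bookkeeping: correctly matching the two distinct bijections $W \cong W/\langle s\rangle \times \langle s \rangle$ and $W \cong W/\langle t\rangle \times \langle t \rangle$ on a common $\langle s,t\rangle$-coset. Once one observes that the combinatorics of right cosets of rank-two parabolic subgroups in a right-angled Coxeter group produces precisely the tensor-product structure needed, the commutation is transparent. Alternatively, one can recognise the whole construction as a special case of the graph product of representations outlined earlier in the section, taking the graph with edges $\{s,t\}$ whenever $[s,t]=e$ and the distinguished vectors $e_0 \in \CC^2$ corresponding to the trivial coset; then the relation is built into the graph-product Hilbert space decomposition, and no verification is needed beyond matching notation.
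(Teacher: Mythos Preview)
Your proof is correct and follows essentially the same approach as the paper: both verify the involution relation trivially and then, for commuting $s,t$, use the decomposition of $\ltwo(W)$ coming from cosets of $\langle s,t\rangle \cong \ZZ/2\ZZ \times \ZZ/2\ZZ$ to see that $\pi(s)$ and $\pi(t)$ act as commuting tensor factors. The paper packages this as a single global unitary $U: \ltwo(W) \to \ltwo(W/\langle s,t\rangle) \otimes \CC^2 \otimes \CC^2$, whereas you work on each four-dimensional coset block separately, but this is only a cosmetic difference (and note a minor terminology slip: the cosets $w_0\langle s,t\rangle$ you use are left cosets, matching the paper's $W/\langle s,t\rangle$).
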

\begin{proof}
  From the definition it is clear that $\pi$ defines a unitary representation $\tilde \pi$ of the free product group $\tilde W = *_S \ZZ/2\ZZ$.  We have to show that it descends to a representation of $W$ by verifying the commutation relation $[\tilde \pi(s), \tilde \pi(t)] = 0$ for all $s,t \in S$ that commute.  So fix different commuting elements $s,t \in S$.  Since $\langle s,t \rangle = \langle s \rangle \times \langle t \rangle \subset W$, the $s$ and $t$-length on $W$ give rise to a bijection $W \cong W/ \langle s,t \rangle \times \ZZ/2\ZZ \times \ZZ/2\ZZ$, which defines a unitary $U: \ltwo(W) \to \ltwo(W/ \langle s,t \rangle) \otimes \CC^2 \otimes \CC^2$.  We have
  \begin{align*}
    \tilde \pi(s) & = U^{-1} (\id \otimes \pi_s(1) \otimes \id) U \\
    \tilde \pi(t) & = U^{-1} (\id \otimes \id \otimes \pi_t(1)) U
               \eqstop
  \end{align*}
  So indeed $[\tilde \pi(s), \tilde \pi(t)] = 0$, showing that $\pi$ is well-defined.
\end{proof}

If we wish to put the construction above explicitly in the framework of graph products of representations mentioned before the lemma, we should only note the natural identifications, exploited for example in \cite{caspers20}, of $W$ with the graph product of order 2 groups, and of $\ell^2(W)$ with the graph product Hilbert space $H_\Gamma$ (where each $H_s$ for $s \in V\Gamma=S$ is the two-dimensional vector space $\ltwo(\langle s \rangle)$ with the chosen unit vector $\delta_e$) and verify that the above construction is identical with the graph product representation.

\begin{example}
  \label{ex:racg-representations}
  Let us understand graph products of two-dimensional unitary representations arising in the study of right-angled Coxeter groups in more detail.  The spectral theorem provides the decomposition of representation spaces
  \begin{gather*}
    \Rep(\ZZ/2\ZZ, \ltwo(\ZZ/2\ZZ)) = \Rep(\ZZ/2\ZZ, \CC^2) = \{1\} \sqcup \{-1\} \sqcup \bS^2
    \eqstop
  \end{gather*}
  The two isolated components arise from the trivial representation and the sign representation.  The component $\bS^2 \cong \cU(2)/\bT^2$ arises from a direct sum decomposition of $\CC^2$ into a pair of orthogonal subspaces on which $\ZZ/2\ZZ$ acts trivially and by the sign representation, respectively.  An explicit description of this component is obtained after observing that the nontrivial element $1 \in \ZZ/2\ZZ$ must act by a self-adjoint trace-free matrix
  \begin{gather*}
    \begin{pmatrix}
      a & 0 \\
      0 & -a
    \end{pmatrix}
    +
    \begin{pmatrix}
      0 & \ol{z} \\
      z & 0
    \end{pmatrix}
    \eqcomma
  \end{gather*}
  for which the values $a \in \RR$ and $z \in \CC$ must satisfy $a^2 + |z|^2 = 1$.

  Applying Proposition \ref{prop:graph-product-representations} to a right-angled Coxeter system $(W, S)$, we obtain for each multiparameter $(\am, \zm) = (a_s, z_s)_{s \in S} \in (\bS^2)^{S}$ a unitary representation $\tilde \lambda_{\am,\zm}$ of $W$ on $\ltwo(W)$.  Denoting by
  \begin{gather*}
    \sigma(s) \delta_w
    =
    (-1)^{|w|_s}\delta_w
    =
    \begin{cases}
      \phantom{-} \delta_w & \text{if } |sw| > |w| \\
      - \delta_w & \text{if } |sw| < |w| \eqstop
    \end{cases}
  \end{gather*}
  the alternating sign representation of $W$ on $\ltwo(W)$, we have $\tilde \lambda_{\am,\zm}(s) = a_s \sigma(s) + z_s \lambda(s)$, if $\zm \in \RR^S$.  In particular, if $(\am, \zm) = (0, 1)_{s \in S}$, then $\tilde \lambda_{\am,\zm} = \lambda$ is the left-regular representation of $W$.

  We observe that given $(\am, \zm) \in (\bS^2 \setminus \{(1,0), (-1,0)\})^S$ we can put
  \begin{align*}
    \vphi_s & = \frac{z_s}{|z_s|} \\
    \vphi_w & = \vphi_{s_1} \dotsm  \vphi_{s_n} \qquad \text{ for a reduced expression } w = s_1 \dotsm s_n
  \end{align*}
  and obtain a unitary equivalence $\tilde \lambda_{\am,\zm} \cong \tilde \lambda_{\am, |\zm|}$ implemented by the unitary $U: \ltwo(W) \to \ltwo(W)$ which satisfies $U \delta_w = \vphi_w \delta_w$.  We will thus write in what follows $\tilde \lambda_\am = \tilde \lambda_{\am, \zm}$ for $\am \in (-1,1)^S$ putting $z_s = \sqrt{1 - a_s^2}$ for all $s \in S$.
\end{example}

The identification of Hecke operator algebras of right-angled Coxeter groups with operator algebras generated by their unitary representations was already observed in \cite[p.358, Note 19.2]{davis08} and in \cite[Proposition 4.2]{caspersklisselarsen21}.  These proofs admit a convenient reformulation by means of the graph product, which we provide for illustration. We formulate it for the Hilbert space case, but it could be also stated in the $\lp$-framework when taking into account bounded, non-isometric representations on $\lp(W)$.
\begin{proposition}
  \label{prop:hecke-algebras-from-representations}
  Let $(W,S)$ be a right-angled Coxeter system and $\am \in (-1,1)^S$.  Define $\qm \in (0,1]^S$ by
  \begin{gather*}
    q_s^{1/2} = \frac{1 - |a_s|}{z_s}
    \eqstop
  \end{gather*}
  Then $\tilde \lambda_\am(W)'' \cong \cN_\qm(W)$.
\end{proposition}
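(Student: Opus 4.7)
The plan is to exhibit $\tilde\lambda_\am(W)''$ as a standard Hecke von Neumann algebra on $\ltwo(W)$ for a certain auxiliary parameter $\tilde\qm$ that differs from $\qm$ only by inverting some coordinates, and then to invoke Proposition~\ref{prop:parameter-reduction-hecke-algebra} to pass from $\tilde\qm$ to the normalised parameter $\qm$ appearing in the statement.

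Concretely, I would first record the action of $\tilde\lambda_\am(s) = a_s\sigma(s) + z_s\lambda(s)$ on basis vectors, which by Example~\ref{ex:racg-representations} reads
\begin{gather*}
  \tilde\lambda_\am(s)\delta_w = a_s \delta_w + z_s\delta_{sw} \text{ if } |sw|>|w|,
  \qquad
  \tilde\lambda_\am(s)\delta_w = -a_s\delta_w + z_s\delta_{sw} \text{ if } |sw|<|w|.
\end{gather*}
Setting $\tilde T_s := z_s^{-1}\tilde\lambda_\am(s) - a_s z_s^{-1} I$, a short direct verification on these two cases shows that $\tilde T_s\delta_w = \delta_{sw}$ whenever $|sw|>|w|$ and $\tilde T_s\delta_w = \delta_{sw} - (2a_s/z_s)\delta_w$ whenever $|sw|<|w|$. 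Hence $\tilde T_s$ acts on $\ltwo(W)$ exactly as the standard Hecke generator $T_s^{(\tilde\qm)}$ for the parameter determined by $\tilde p_s = -2a_s/z_s$, and solving $\tilde q_s^{1/2} - \tilde q_s^{-1/2} = -2a_s/z_s$ gives $\tilde q_s^{1/2} = (1-a_s)/z_s$. The Hecke relation $\tilde T_s^2 = \tilde p_s \tilde T_s + I$ can either be read off from this identification or checked abstractly from $\tilde\lambda_\am(s)^2 = I$ and $a_s^2 + z_s^2 = 1$, and the Coxeter commutation relations among the $\tilde T_s$ are inherited automatically from $\tilde\lambda_\am$ being a representation. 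Since $\tilde\lambda_\am(s) = z_s \tilde T_s + a_s I$ and $\tilde T_s = z_s^{-1}\tilde\lambda_\am(s) - a_s z_s^{-1}I$ are invertible affine combinations of each other, this yields the equality $\tilde\lambda_\am(W)'' = \cN_{\tilde\qm}(W)$ as subalgebras of $\bo(\ltwo(W))$.

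To finish, one compares $\tilde\qm$ with the $\qm$ of the statement, namely $q_s^{1/2} = (1-|a_s|)/z_s$. For $a_s \geq 0$ one has $\tilde q_s = q_s$, while for $a_s < 0$ a brief calculation using $z_s^2 = 1-a_s^2$ gives $\tilde q_s = q_s^{-1}$. Setting $\epsilon_s = \mathrm{sgn}(a_s)$ when $a_s \neq 0$ and $\epsilon_s = 1$ otherwise, this reads $\tilde\qm = (q_s^{\epsilon_s})_{s \in S}$, so Proposition~\ref{prop:parameter-reduction-hecke-algebra} supplies the required isomorphism $\cN_{\tilde\qm}(W) \cong \cN_\qm(W)$, completing the proof. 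The main point requiring care is precisely this last step: for $a_s < 0$ the natural affine identification forces a Hecke parameter $\tilde q_s \geq 1$, whereas the statement is phrased for $\qm \in (0,1]^S$, so the Kazhdan--Lusztig-style parameter reduction is genuinely needed and cannot be avoided by an equality of subalgebras.
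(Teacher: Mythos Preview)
Your proof is correct and shares the same core idea as the paper's: both identify $\tilde\lambda_\am(s)$ as an invertible affine combination of the identity and a Hecke generator. The organisation differs in two minor ways. First, the paper reduces to $a_s\geq 0$ at the outset via the unitary equivalence $\tilde\lambda_{|\am|,\epsilonm\zm}\cong\tilde\lambda_{|\am|}$ from Example~\ref{ex:racg-representations}, whereas you carry the signed parameter $\tilde q_s^{1/2}=(1-a_s)/z_s$ throughout and invoke Proposition~\ref{prop:parameter-reduction-hecke-algebra} only at the end to pass to $\qm\in(0,1]^S$; these two ways of absorbing signs are equivalent. Second, the paper appeals to the graph-product structure to reduce the verification to $W=\ZZ/2\ZZ$ and a $2\times2$ matrix computation, while you instead verify directly on basis vectors of $\ltwo(W)$ that your $\tilde T_s$ coincides with $\pi_{\tilde\qm}^2(T_s^{(\tilde\qm)})$ and hence obtain an equality of von Neumann subalgebras of $\bo(\ltwo(W))$ rather than an abstract isomorphism. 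Your route is arguably more self-contained; the paper's is shorter once the graph-product machinery is in place.
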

\begin{proof}
  For $\epsilonm \in \{-1,1\}^S$ given by $\epsilon_s = \sign(a_s)$, we put $\epsilonm \zm = (\epsilon_s z_s)_{s \in S}$.  Then $\epsilon_s \tilde \lambda_\am(s) = \tilde \lambda_{|\am|, \epsilon \zm}(s)$ for all $s \in S$ and $\tilde \lambda_{|\am|, \epsilonm \zm} \cong \tilde \lambda_{|\am|}$.  This allows us to reduce the considerations to the case where $a_s \geq 0$ for all $s \in S$.  Since both the Iwahori-Hecke algebras and the representations $\tilde \lambda_a$ arise as graph products, it suffices to consider the case $W = \ZZ/2\ZZ$ and match entries of $2 \times 2$-matrices. The ansatz $\tilde \lambda_\am(s) = c1 + d T_s^{(\qm)}$ for real parameters $c,d \in \RR$ yields the equality
  \begin{gather*}
    \begin{pmatrix}
      a_s & z_s \\
      z_s & -a_s
    \end{pmatrix}
    =
    \begin{pmatrix}
      c & d \\
      d & c + d \frac{q_s - 1}{q_s^{1/2}}
    \end{pmatrix}
    \eqstop
  \end{gather*}
  Employing the identity $z_s = \sqrt{1 - a_s^2}$, a short calculation yields $q_s^{1/2} = \frac{1 - a_s}{z_s}$, finishing the proof of the proposition.
\end{proof}

We would like to note here that the above result points the way to constructing and studying interesting deformations of group algebras associated say to RAAGs via the graph products of representations, using the graph product structure of the groups in question and replacing the left regular representation of the individual components by their `perturbations', as was done above with the representations of order 2 groups.

By Theorem \ref{thm:factoriality}, we know that $\tilde \lambda_a$ is a direct sum of a finite factor representation and a finite (possibly empty) collection of one-dimensional representations.  Let us first identify the one-dimensional summands, slightly refining the picture of Hecke eigenvectors as considered in Proposition \ref{prop:hecke-eigenvectors} and allowing us to pick up the signs of a parameter $\am \in (-1,1)^S$.  %
\begin{proposition}
  \label{prop:eigenvectors}
  Let $(W, S)$ be a right-angled Coxeter group, $\am \in (-1,1)^S$ and set for each $s \in S$ 
  \begin{gather*}
    q_s^{\frac{1}{2}}
    =
    \sign(a_s) \frac{1 - |a_s|}{z_s}
    =
    \begin{cases}
      \frac{1 - a_s}{z_s} & \text{ if } a_s \geq 0 \\
      - \frac{1 + a_s}{z_s} & \text{ if } a_s < 0
      \eqstop
    \end{cases}
  \end{gather*}
  Let $\vepsm \in \{-1,1\}^S$, assume that $\sum_{w \in W} q_{w, \vepsm}$ is absolutely summable and put $\eta_\vepsm = \sum_{w \in W} q_{w, \vepsm}^{\frac{1}{2}} \delta_w \in \ltwo(W)$.  Then
  \begin{gather*}
    \tilde \lambda_\am(s)\eta_\vepsm = \veps_s \sign(a_s) \eta_\vepsm
    \eqstop
  \end{gather*}
\end{proposition}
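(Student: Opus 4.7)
The plan is a direct computation using the explicit formula $\tilde\lambda_\am(s) = a_s\sigma(s) + z_s\lambda(s)$ recorded in Example~\ref{ex:racg-representations}, which applies here since $\zm$ is taken real. The absolute summability hypothesis places $\eta_\vepsm$ in $\ltwo(W)$, and because $\tilde\lambda_\am(s)$ is bounded it suffices to compare the coefficient of each basis vector $\delta_v$ on the two sides of the desired identity.

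Fix $v \in W$. The coefficient of $\delta_v$ in $\tilde\lambda_\am(s)\eta_\vepsm$ receives two contributions: a term $\pm a_s\, q_{v,\vepsm}^{1/2}$ arising from $a_s\sigma(s)\delta_v$, with the minus sign occurring precisely when $v$ starts with $s$, and a term $z_s\, q_{sv,\vepsm}^{1/2}$ arising from $z_s\lambda(s)\delta_{sv}$. Using the multiplicativity of weights along reduced expressions, namely $q_{sv,\vepsm}^{1/2} = q_{s,\vepsm}^{1/2}\cdot q_{v,\vepsm}^{1/2}$ when $|sv| = |v|+1$ and dividing through in the opposite case, both contributions factor as $q_{v,\vepsm}^{1/2}$ times a scalar depending only on $s$. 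The desired eigenvector equation thus reduces to the scalar identity
\[
  a_s + z_s\, q_{s,\vepsm}^{1/2} \;=\; \veps_s\,\sign(a_s)
\]
when $v$ does not start with $s$, together with the companion $-a_s + z_s/q_{s,\vepsm}^{1/2} = \veps_s\,\sign(a_s)$ otherwise; these two identities are equivalent via $q_{s,\vepsm} = \veps_s\, q_s^{\veps_s}$, so only one needs to be checked from scratch.

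The remaining verification is a straightforward case analysis over the four sign combinations $(\sign(a_s),\veps_s) \in \{-1,+1\}^2$: one substitutes the definition $q_s^{1/2} = \sign(a_s)(1-|a_s|)/z_s$ and simplifies using $z_s^2 = (1-|a_s|)(1+|a_s|)$. No step is technically difficult; the only point requiring care is the sign bookkeeping in the convention for $q_s^{1/2}$ introduced in the statement, which differs from the unsigned convention in Proposition~\ref{prop:hecke-algebras-from-representations} and is precisely what produces the extra factor $\sign(a_s)$ in the eigenvalue, refining Proposition~\ref{prop:hecke-eigenvectors} to incorporate negative parameters $a_s$.
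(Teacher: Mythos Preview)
Your argument is correct. The coefficient-matching reduction to the scalar identity $a_s + z_s\,q_{s,\vepsm}^{1/2} = \veps_s\,\sign(a_s)$ is valid, the claimed equivalence with the companion identity follows from the involution $\veps_s \mapsto -\veps_s$ (since $q_{s,-\vepsm}^{1/2} = -1/q_{s,\vepsm}^{1/2}$), and the four-case check goes through using $z_s^2 = (1-|a_s|)(1+|a_s|)$.

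The paper takes a slightly different route. Rather than computing directly with $\tilde\lambda_\am(s) = a_s\sigma(s) + z_s\lambda(s)$, it first reduces to the case $a_s \geq 0$ for all $s$ via the relation $\tilde\lambda_\am(s) = \sign(a_s)\,\tilde\lambda_{|\am|,\epsilonm\zm}(s)$ together with the diagonal unitary equivalence $\tilde\lambda_{|\am|,\epsilonm\zm} \cong \tilde\lambda_{|\am|}$ from Example~\ref{ex:racg-representations}, and then, in the nonnegative case, invokes the identification $\tilde\lambda_\am(s) = a_s 1 + z_s T_s^{(\qm)}$ from the proof of Proposition~\ref{prop:hecke-algebras-from-representations} and quotes the Hecke eigenvalue $T_s^{(\qm)}\eta_\vepsm = \veps_s q_s^{\veps_s/2}\eta_\vepsm$ from Proposition~\ref{prop:hecke-eigenvectors}. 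This leaves only the two scalar checks $a_s + z_s q_s^{1/2} = 1$ and $a_s - z_s q_s^{-1/2} = -1$. Your approach is more self-contained and avoids both the sign reduction and the appeal to the Hecke-operator framework, at the modest cost of handling four sign combinations instead of two; the paper's approach is shorter once those earlier results are available and makes the connection to the Hecke eigenvector theory explicit.
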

\begin{proof}
  Since $\tilde \lambda_\am(s) = \epsilon_s \tilde \lambda_{|a|, \epsilon z}(s)$ and the unitary conjugating $\tilde \lambda_{|a|, \epsilon z}$ to $\lambda_{|a|}$ respects the definition of $q_s$, we may assume that $a_s \geq 0$ for all $s \in S$.  Then 
  \begin{gather*}
    \tilde \lambda_\am(s) = a_s1 + z_s T_s^{(\qm)}
  \end{gather*}
  for all $s \in S$ follows as in the proof of Proposition \ref{prop:hecke-algebras-from-representations}.  Combining this with the calculation of Hecke eigenvalues from Proposition \ref{prop:hecke-eigenvectors}, we find that
  \begin{gather*}
    \tilde \lambda_\am(s) \eta_\vepsm = (a_s + z_s \veps_s q_s^{\veps_s \frac{1}{2}}) \eta_\vepsm
    \eqstop
  \end{gather*}
  We further calculate
  \begin{gather*}
    a_s + z_s q_s^{\frac{1}{2}} = a_s + 1 - a_s = 1
  \end{gather*}
  and
  \begin{gather*}
    a_s - z_s q_s^{-\frac{1}{2}}
    =
    a_s - \frac{z_s^2}{1 - a_s}
    =
    \frac{(1 - a_s)a_s - (1 - a_s^2)}{1 - a_s}
    =
    -1
    \eqstop
  \end{gather*}
\end{proof}

\begin{notation}
  \label{not:reduced-representation}
  Let $(W, S)$ be an irreducible, right-angled Coxeter system with at least three generators and let $\am \in (-1, 1)^S$.  We denote by $\lambda_{\am}$ the weakly mixing part of $\tilde \lambda_{\am}$, that is the restriction of $\tilde \lambda_{\am}$ to the orthogonal complement of its finite dimensional irreducible  subrepresentations (which are at most one-dimensional).  By Theorem \ref{thm:factoriality}, Proposition \ref{prop:hecke-algebras-from-representations} and Proposition \ref{prop:eigenvectors}, all $\lambda_\am$ are finite factor representations of $W$.
\end{notation}

We will finally record some properties of characters of our representations.
\begin{lemma}
	\label{lem:calcuation-character}
	Let $(W, S)$ be a right-angled Coxeter system, let $\am \in (-1,1)^S$ and let $\tau_\am$ be the character of $(\tilde \lambda_\am, \delta_e)$.
	\begin{itemize}
		\item For $s \in S$, we have $\tau_\am(s) = a_s$.
		\item For $s,t \in S$ different letters, we have $\tau_\am(st) =a_sa_t$.
		\item For $s,t,u \in S$ pairwise different letters, we have $\tau_\am(stu) =a_sa_ta_u$.
		\item If $s,t \in S$ do not commute, then $\tau_\am(stst) = a_s^2 + a_t^2 - a_s^2a_t^2$.
	\end{itemize}
\end{lemma}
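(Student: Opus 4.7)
The plan is to reduce each character value to a Hecke-algebraic computation using the algebra identification $\tilde \lambda_\am(s) = a_s \mathbb{1} + z_s T_s^{(\qm)}$ inside $\cN_\qm(W)$ established in the proof of Proposition~\ref{prop:hecke-algebras-from-representations}.  Since the canonical trace on $\cN_\qm(W)$ satisfies $\langle T_v^{(\qm)} \delta_e, \delta_e \rangle = \delta_{v,e}$ and since $\tau_\am(w) = \langle \tilde \lambda_\am(w) \delta_e, \delta_e \rangle$, one expands $(a_{s_1}\mathbb{1} + z_{s_1} T_{s_1}^{(\qm)}) \cdots (a_{s_n}\mathbb{1} + z_{s_n} T_{s_n}^{(\qm)})$ for a reduced expression $w = s_1 \cdots s_n$, reduces each arising sub-product of $T_{s_i}^{(\qm)}$'s to a linear combination of basis elements $T_v^{(\qm)}$ by the Artin and quadratic Hecke relations, and reads off the coefficient of $\mathbb{1}$.

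For (i)--(iii), the words $s$, $st$ and $stu$ are reduced with pairwise distinct letters, so every non-empty sub-product of the corresponding $T_{s_i}^{(\qm)}$'s already equals $T_v^{(\qm)}$ for some $v \neq e$, without requiring any Hecke reduction.  All such terms are traceless, leaving only the constant contribution $a_{s_1} \cdots a_{s_n}$.  This immediately yields the claimed values $\tau_\am(s) = a_s$, $\tau_\am(st) = a_s a_t$ and $\tau_\am(stu) = a_s a_t a_u$.

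For (iv), the expansion of $\tilde \lambda_\am(stst)$ has $2^4 = 16$ summands, indexed by the choice of constant or $T$-factor in each of the four positions corresponding to $s,t,s,t$.  Direct inspection shows that precisely three indices produce a non-zero coefficient of $\mathbb{1}$ after applying $(T_r^{(\qm)})^2 = \mathbb{1} + p_r T_r^{(\qm)}$: the all-constant term, contributing $a_s^2 a_t^2$; the term choosing $T_s^{(\qm)}$ in positions $1$ and $3$, contributing $z_s^2 a_t^2$ via $(T_s^{(\qm)})^2$; and the term choosing $T_t^{(\qm)}$ in positions $2$ and $4$, contributing $a_s^2 z_t^2$ via $(T_t^{(\qm)})^2$.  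All remaining sub-products reduce to linear combinations of $T_v^{(\qm)}$'s with $v \in \{s, t, st, ts, sts, tst, stst\}$ (using the assumption $[s,t] \neq e$, which prevents any further collapse), and hence are traceless.  Using $a_r^2 + z_r^2 = 1$, the sum equals
\begin{gather*}
  a_s^2 a_t^2 + a_t^2 (1 - a_s^2) + a_s^2 (1 - a_t^2) = a_s^2 + a_t^2 - a_s^2 a_t^2,
\end{gather*}
as claimed.  The only mildly delicate point in the whole lemma is this bookkeeping of which pairs of $T_r^{(\qm)}$'s collide in case (iv); the remainder is a mechanical expansion combined with the traciality of $\langle \cdot \, \delta_e, \delta_e \rangle$.
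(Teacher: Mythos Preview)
Your argument is correct and is essentially the same computation the paper performs: expand $\tilde\lambda_\am(w)$ as a sum of $2^{|w|}$ monomials and pick out those contributing to the trace, arriving at exactly the three surviving terms $a_s^2a_t^2$, $z_s^2a_t^2$, $a_s^2z_t^2$ in case (iv).  The only cosmetic difference is that you route through the trace-preserving isomorphism $\tilde\lambda_\am(W)''\cong\cN_\qm(W)$ of Proposition~\ref{prop:hecke-algebras-from-representations}, while the paper works directly with the concrete formula $\tilde\lambda_\am(s)=a_s\sigma(s)+z_s\lambda(s)$ from Example~\ref{ex:racg-representations}; one small caveat is that the explicit identity $\tilde\lambda_\am(s)=a_s\mathbb{1}+z_sT_s^{(\qm)}$ is only derived there after reducing to $a_s\geq 0$, so you should either insert that reduction (the sign twist preserves $\tau_\am$ up to the obvious factors $\prod\epsilon_{s_i}$) or remark that the graph-product isomorphism is trace-preserving by construction.
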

\begin{proof}
	It is easy to note that if $w = s_1 \dotsm s_n$ is a reduced word in $W$, then the computation of $\tau_\am(w) = \langle \tilde \lambda_\am(w) \delta_e, \delta_e \rangle$ amounts to summing over subwords of $w$ which are trivial in $W$ certain coefficients determined as products of $\pm a_s$ and $z_s$, where $s$ runs over the letters in the subword.  Writing these expressions down explicitly yields the statement of the lemma. In particular, $\tau_\am(s) = a_s$ and $\tau_\am(st) = a_sa_t$ and $\tau_\am(stu) =a_sa_ta_u$ are clear.  The last statement follows from the calculation
	\begin{gather*}
		\tau_\am(stst)
		=
		a_t^2 a_s^2 + a_t^2 z_s^2 + z_t^2 a_s^2
		=
		a_t^2 a_s^2 + a_t^2 (1 - a_s^2) + (1 - a_t^2) a_s^2
		=
		a_s^2 + a_t^2 - a_s^2 a_t^2
		\eqstop
	\end{gather*}
\end{proof}

We can directly proceed to the proofs of Theorem \ref{thm:unitary-equivalence} and Corollary \ref{cor:traces}.
\begin{proof}[Proof of Theorem \ref{thm:unitary-equivalence}]
  The fact that $\cN_\qm(W) = \lambda_\am(W)'' \oplus A_\am$ follows from Proposition \ref{prop:hecke-algebras-from-representations}.  By Theorem~\ref{thm:factoriality} there is a neighbourhood $U$ of $0 \in (-1,1)^S$ such that $\tilde \lambda_\am$ is a (finite) factor representation for all $\am \in U$. If $\lambda_\am \cong \lambda_\bm$ for multiparameters $\am, \bm \in (-1,1)^S$, then their characters must agree.  So Lemma \ref{lem:calcuation-character} implies that $\am = \bm$.
\end{proof}

\begin{proof}[Proof of Corollary \ref{cor:traces}]
  Consider a neighbourhood $U \subset (-1,1)^S$ of zero such that $\tilde \lambda_\am = \lambda_\am$ is a factor representation for all $\am \in U$.  Theorem \ref{thm:unitary-equivalence} says that the representations $\lambda_\am$, $\am \in U$ are pairwise unitarily inequivalent.  Also $\am \mapsto \tilde \lambda_\am = \lambda_\am$ is continuous (with respect to the topology of pointwise convergence on $\textup{Char}(W)$) and $\lambda_0 = \lambda$ is the regular representation.  So the corollary follows from the fact that that $\am \mapsto \lambda_\am$ is a homeomorphism onto the image of a neighbourhood of $\mathbf{0} \in \mathbb{R}^{|S|}$.
\end{proof}

In the remainder of this section, we show that Theorem \ref{thm:unitary-equivalence} admits an extension to a larger set of multiparameters.  We believe that the representations $\lambda_\am$ are pairwise unitarily inequivalent for all $\am \in (-1,1)^S$.

\setlength{\parindent}{0em}

\begin{lemma}
  \label{lem:unitary-equivalence-basic-cases}
  Let $(W,S)$ be an irreducible, right-angled Coxeter group and let $\am,\bm \in (-1, 1)^S$. Denote by $\tau_{\am}, \tau_{\bm}$ the characters of $(\tilde \lambda_{\am}, \delta_e)$ and $(\tilde \lambda_{\bm}, \delta_e)$, respectively. Assume that there are $c, d \in \RR$ such that
  \begin{gather*}
    \tau_\am = c \sigma_\am + d \sigma_\bm + (1 - c - d)\tau_\bm
    \eqstop
  \end{gather*}
  If there is $s \in S$ such that $a_s = b_s$, then $\am = \bm$.
\end{lemma}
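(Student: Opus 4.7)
Interpret $\sigma_\am$ and $\sigma_\bm$ as the one-dimensional characters of $W$ extending $\sigma_\am(s) = \sign(a_s)$ and $\sigma_\bm(s) = \sign(b_s)$ (as in Proposition \ref{prop:eigenvectors}), which implicitly requires $a_s, b_s \neq 0$ for all $s \in S$.  The plan is to test the assumed identity $\tau_\am = c\sigma_\am + d\sigma_\bm + (1-c-d)\tau_\bm$ against short reduced words, using Lemma \ref{lem:calcuation-character} to compute the values of $\tau_\am$ and $\tau_\bm$.

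First, testing at the generator $s$ where $a_s = b_s$ gives $(c+d)(a_s - \sign(a_s)) = 0$.  Since $a_s \in (-1,1) \setminus \{0\}$, this forces $d = -c$.

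To show that $a_u = b_u$ for every $u \in S$, I would proceed by induction along the Coxeter graph of $(W,S)$ (whose edges encode non-commutation and which is connected by irreducibility), starting from $s$.  For the inductive step, assume $u \in S$ is non-commuting with a letter $t$ for which $a_t = b_t$ is already known.  Testing the identity at the length-four reduced word $tutu$ and using $a_t = b_t$ together with $d = -c$ and $\sigma_\am(tutu) = \sigma_\bm(tutu) = 1$ leads, via Lemma \ref{lem:calcuation-character}, to $(1 - a_t^2)(a_u^2 - b_u^2) = 0$, hence $a_u^2 = b_u^2$.

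To exclude the sign flip $a_u = -b_u \neq 0$, I would combine two further evaluations.  Testing at the generator $u$ with $d = -c$ yields $a_u - b_u = c(\sign(a_u) - \sign(b_u))$, which under the sign flip gives $c = |a_u|$.  Testing at the length-two word $su$ with $a_s = b_s$ and $d = -c$ yields $a_s(a_u - b_u) = c\sign(a_s)(\sign(a_u) - \sign(b_u))$, which under the same sign flip reduces to $|a_s||a_u| = c$.  Combining the two expressions for $c$ forces $|a_s| = 1$, contradicting $a_s \in (-1,1)$.  Hence $a_u = b_u$.

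Iterating along a path of non-commuting neighbours from $s$ to any $u \in S$ (which exists by irreducibility) extends the equality to all of $S$, so $\am = \bm$.  The expected main obstacle is the sign-flip case analysis, which is what genuinely requires both length-two and length-four evaluations; once resolved at a single edge of the Coxeter graph, the connectivity propagation is routine.
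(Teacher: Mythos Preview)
Your argument is correct, but it takes a longer route than necessary, and the paper's proof is substantially more direct.  After establishing $c+d=0$ from the evaluation at $s$, the paper proceeds immediately to an \emph{arbitrary} $t\in S\setminus\{s\}$: evaluating the identity at $t$ and at $st$ (using only Lemma~\ref{lem:calcuation-character} for words of length one and two) and subtracting yields $(1-a_s)a_t=(1-a_s)b_t$, hence $a_t=b_t$.  No induction along the Coxeter graph and no length-four evaluation is needed.  In fact, your own sign-flip step already contains this shortcut: the two evaluations at $u$ and at $su$ that you use to exclude $a_u=-b_u$ work for \emph{every} $u\in S\setminus\{s\}$, regardless of whether $u$ is adjacent to anything in the Coxeter graph, and together they force $a_u=b_u$ outright (split on $\sign(a_u)=\sign(b_u)$ versus $\sign(a_u)\neq\sign(b_u)$ instead of first pinning down $a_u^2=b_u^2$).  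So the evaluation at $tutu$ and the connectivity argument are redundant; your proof collapses to the paper's once you notice this.
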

\begin{proof}
  The statement is trivial if $S$ contains only one element.  Without loss of generality, we may assume that $a_s \geq 0$, which simplifies notation.  We apply Lemma \ref{lem:calcuation-character} to obtain
  \begin{gather*}
    a_s = c + d + (1 - c - d)b_s = c + d + (1 - c - d)a_s
    \eqstop
  \end{gather*}
  Simplifying this equality, we obtain $a_s (c + d) = c + d$, which implies $c + d = 0$, since $a_s \neq 1$.  Let now $t \in S \setminus s$ be arbitrary.  If $\sign(a_t) = \sign(b_t)$, Lemma \ref{lem:calcuation-character} yields
  \begin{gather*}
    a_t = \sign(a_t)(c + d) + (1 - c - d)b_t = b_t
    \eqstop
  \end{gather*}
  If $\sign(a_t) \neq \sign(b_t)$, then the identity $c + d = 0$ implies
  \begin{gather*}
    a_t = \sign(a_t)(c - d) + (1 - c - d)b_t = \sign(a_t)2c + b_t
    \eqstop
  \end{gather*}
  Similarly,
  \begin{gather*}
    a_sa_t = \sign(a_t)2c + a_sb_t
    \eqstop
  \end{gather*}
  Taking the difference of these identities gives us
  \begin{gather*}
    (1 - a_s) a_t = (1 - a_s)b_t
    \eqcomma
  \end{gather*}
  which allows to conclude $a_t = b_t$, because $a_s \neq 1$.  Since $t \in S \setminus s$ was arbitrary, this concludes the proof of the lemma.
\end{proof}

\begin{proposition}
  \label{prop:unitary-equivalence-single-parameter}
  Let $(W,S)$ be an irreducible, right-angled Coxeter system with at least three generators.  Consider the set $U$ of all $\am \in (-1,1)^S$ such that absolute convergence of $\sum_w q_{w,\vepsm}^{\frac{1}{2}}$ (with $\qm$ associated to $\am$ as in Proposition \ref{prop:hecke-algebras-from-representations}) implies $\vepsm = \textbf{1}$. If  $\am, \bm \in U$ and $\lambda_\am \cong \lambda_\bm$, then $\am = \bm$.
\end{proposition}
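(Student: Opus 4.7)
My plan is to reduce to an application of Lemma~\ref{lem:unitary-equivalence-basic-cases} through a character computation. To set up, I first invoke Proposition~\ref{prop:hecke-algebras-from-representations} together with Theorem~\ref{thm:factoriality} and Proposition~\ref{prop:eigenvectors}: the defining assumption $\am \in U$ ensures that the only one-dimensional subrepresentation of $\tilde\lambda_\am$ that can occur is the one spanned by $\eta_{\mathbf 1}^\am$, and hence $\tilde\lambda_\am = \lambda_\am \oplus \pi_\am$ with $\pi_\am$ either trivial or the character sending $s$ to $\sign(a_s)$. Evaluating the trace at the cyclic vector $\delta_e$ then yields
\begin{gather*}
\tau_\am = \alpha\sigma_\am + (1-\alpha)\chi_{\lambda_\am},
\end{gather*}
where $\alpha \in [0,1)$ is the squared length of the projection of $\delta_e$ onto the one-dimensional summand and $\chi_{\lambda_\am}$ is the unique normalised trace of the $\mathrm{II}_1$ factor $\lambda_\am(W)''$, viewed as a character of $W$. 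The analogous identity holds for $\bm$ with scalars $\beta$, $\sigma_\bm$, $\chi_{\lambda_\bm}$.

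Next, a unitary equivalence $\lambda_\am \cong \lambda_\bm$ forces the common character $\chi := \chi_{\lambda_\am} = \chi_{\lambda_\bm}$ by uniqueness of traces on $\mathrm{II}_1$ factors. Eliminating $\chi$ between the two decompositions produces
\begin{gather*}
\tau_\am = \alpha\sigma_\am - \frac{\beta(1-\alpha)}{1-\beta}\sigma_\bm + \frac{1-\alpha}{1-\beta}\tau_\bm,
\end{gather*}
which is precisely the shape required by Lemma~\ref{lem:unitary-equivalence-basic-cases} with $c = \alpha$ and $d = -\beta(1-\alpha)/(1-\beta)$. It therefore suffices to produce some $s \in S$ with $a_s = b_s$ in order to conclude $\am = \bm$.

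The final step is to evaluate the character identity at short words and exploit Lemma~\ref{lem:calcuation-character}. Single generators give $a_s = c\sign(a_s) + d\sign(b_s) + (1-c-d)b_s$, and products $st$ with $s \neq t$ give $a_sa_t = c\sign(a_sa_t) + d\sign(b_sb_t) + (1-c-d)b_sb_t$. In the subcase $\sigma_\am = \sigma_\bm$, I would substitute the first relation into the second and match the coefficient of $b_sb_t$, obtaining $(1-c-d)^2 = 1-c-d$ and hence $c+d \in \{0,1\}$; ruling out $c+d=1$ (which forces $a_s = \pm 1$) leaves $c+d = 0$, so that $a_s = b_s$ for every $s \in S$. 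In the subcase $\sigma_\am \neq \sigma_\bm$, where sign discrepancies obstruct this clean matching, I would bring in the quadratic relation $\tau_\am(stst) = a_s^2 + a_t^2 - a_s^2a_t^2$ at a non-commuting pair $s, t \in S$ (which exists because $(W,S)$ is irreducible of rank at least three) and argue by casework on whether the letters lie in $T := \{r \in S : \sign(a_r) \neq \sign(b_r)\}$ or its complement. This second subcase is the main obstacle: one must combine the linear and quadratic relations with the strict bound $|a_r|, |b_r| < 1$ to derive either a direct contradiction or an instance of $a_s = b_s$, after which Lemma~\ref{lem:unitary-equivalence-basic-cases} delivers $\am = \bm$.
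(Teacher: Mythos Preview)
Your setup is correct and matches the paper: the character identity $\tau_\am = c\sigma_\am + d\sigma_\bm + (1-c-d)\tau_\bm$ with $1-c-d > 0$ is exactly where the paper starts, and reducing to Lemma~\ref{lem:unitary-equivalence-basic-cases} by finding one coordinate with $a_s = b_s$ is the right plan.

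However, your Subcase~2 ($\sigma_\am \neq \sigma_\bm$) is not a proof but an outline: you name the relation $\tau_\am(stst) = a_s^2 + a_t^2 - a_s^2 a_t^2$ and announce casework on $T$, but you do not execute it. This is precisely the hard part of the argument, and the paper does \emph{not} use the $stst$ relation at all. Instead it first normalises to $a_s \geq 0$ for all $s$, then splits according to how many $b_s$ remain nonnegative. When at least two do, the two-letter computation you already carried out applies (note that your Subcase~1 argument only uses a single pair $s,t$ with $\sign(a_s)=\sign(b_s)$ and $\sign(a_t)=\sign(b_t)$, so it already covers much more than the global condition $\sigma_\am = \sigma_\bm$). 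When at most one $b_s$ is nonnegative, the paper obtains contradictions by combining the linear relations with the \emph{three-letter} relation $\tau_\am(stu) = a_s a_t a_u$ and the positivity $1-c-d > 0$: for exactly one nonnegative $b_u$, subtracting the relation at $s$ from the relation at $us$ forces $1-c-d \leq 0$; for all $b_s < 0$, one first deduces that $a_s$ is constant in $s$ (and likewise $b_s$), and then the relation at $stu$ versus at $s$ again contradicts $1-c-d > 0$.

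A smaller point: in your Subcase~1 the phrase ``match the coefficient of $b_s b_t$'' is not meaningful, since $b_s, b_t$ are fixed scalars. What the substitution actually yields, after collecting terms, is $\gamma(1-\gamma)(\sign(b_s)-b_s)(\sign(b_t)-b_t)=0$ with $\gamma = c+d$; since the last two factors are nonzero, you get $\gamma \in \{0,1\}$. The conclusion is the one you state, but the route to it is a factorisation, not coefficient comparison.
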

\begin{proof}
  Denote by $\tau_\am,\tau_\bm$ the characters of $(\tilde \lambda_\am, \delta_e)$ and $(\tilde \lambda_\bm, \delta_e)$, respectively.  Then Proposition \ref{prop:eigenvectors} implies that there are scalars $c,d \in \RR$ such that $c + d \in [0,1)$ and
  \begin{gather*}
    \tau_\am = c \sigma_\am + d \sigma_\bm + (1 - c - d)\tau_\bm
    \eqstop
  \end{gather*}
  Replacing $a$ and $b$ by $(|a_s|)_{s \in S}$ and $(\sign(a_s)b_s)_{s \in S}$, respectively, we may assume that $a_s \geq 0$ for all $s \in S$.  We distinguish three cases.
  
  \textbf{Case 1}.  There are two letters $s,t \in S$ such that $b_s, b_t \geq 0$. \\
  Putting $\gamma = c + d$, Lemma \ref{lem:calcuation-character} implies that
  \begin{align*}
    a_s & = \gamma + (1 - \gamma)b_s \\
    a_t & = \gamma + (1 - \gamma)b_t \\
    a_sa_t & = \gamma + (1 - \gamma)b_sb_t
    \eqstop
  \end{align*}
  Substituting the first two expressions into the third, and simplifying, we obtain
  \begin{align*}
    0 & =
        (\gamma^2 - \gamma) + \gamma(1 - \gamma)(b_t + b_s) + ((1 - \gamma)^2 - (1 - \gamma))b_s b_t \\
      & =
        \gamma (\gamma -  1) (b_s - 1)(b_t - 1)
        \eqstop
  \end{align*}
  Since $\gamma, b_s, b_t \neq 1$, we find that $\gamma = 0$ and thus $a_s = b_s$.  So Lemma \ref{lem:unitary-equivalence-basic-cases} shows that $\am = \bm$.

  \textbf{Case 2}. There is exactly one $u \in S$ such that $b_u \geq 0$. \\
  We will obtain a contradiction in this case.  Take $s \in S \setminus \{u\}$.  Then Lemma \ref{lem:calcuation-character} provides the identities
  \begin{align*}
    a_s & = c - d + (1 - c - d)b_s \\
    a_ua_s & = c - d + (1 - c - d)b_ub_s
             \eqcomma
  \end{align*}
  which we subtract in order to obtain
  \begin{gather*}
    0 \leq a_s(1 - a_u) = (1 - c - d) b_s ( 1 - b_u)
    \eqstop
  \end{gather*}
  Since $b_s < 0$ and $1 - b_u > 0$, this implies $1 - c - d \leq 0$.  The latter is a contradiction to $c + d \in [0,1)$.

  \textbf{Case 3}. We have $b_s < 0$ for all $s \in S$. \\
  Again we aim for a contradiction.  Take three pairwise different letters $s,t,u \in S$.  From Lemma \ref{lem:calcuation-character}, we obtain that
  \begin{align*}
    a_s & = c - d + (1 - c - d)b_s \\
    a_t & = c - d + (1 - c - d)b_t \\
    a_ua_s & = c + d + (1 - c - d)b_ub_s \\
    a_ua_t & = c + d + (1 - c - d)b_ub_t
             \eqstop
  \end{align*}
  We subtract the second identity from the first one and the fourth from the third one, and we obtain
  \begin{align*}
    a_s - a_t & = (1 - c - d)(b_s - b_t) \\
    a_u(a_s - a_t) & = (1 - c - d)b_u(b_s - b_t)
                     \eqstop
  \end{align*}
  Since $a_u \neq b_u$, this implies $a_s = a_t$.  Because $c + d \neq 1$, we also obtain the identity $b_s = b_t$. Similarly $a_s = a_u$.

  Let us now take pairwise different letters $s,t, u \in S$.  Evaluating $\tau_\am = c\sigma_\am + d \sigma_\bm + (1 - c - d)\tau_\bm$ on the reduced words $s$ and $stu$, we obtain
  \begin{align*}
    a_s & = c - d + (1 - c - d)b_b \\
    a_s^3 & = a_sa_ta_u = c - d + (1 - c - d)b_sb_tb_u = c - d + (1 - c - d)b_s^3
            \eqstop
  \end{align*}
  Subtracting the second from the first identity we obtain
  \begin{gather*}
    a_s - a_s^3 = (1 - c  - d)(b_s - b_s^3)
    \eqstop
  \end{gather*}
  Since $a_s \in [0,1)$ and $b_s \in (-1,0)$, this implies that $1 - c - d < 0$, which is a contradiction to $c + d \in [0,1)$.
\end{proof}

The last proposition applies for example to the single parameter case, in the sense that the set $U$ contains all the constant sequences.

\printbibliography

\vspace{2em}
{\small
  \parbox[t]{200pt}
  {
    Sven Raum \\
    Department of Mathematics \\
    Stockholm University \\
    SE-106 91 Stockholm \\
    Sweden \\[1em]
    and \\[1em]
    Institute of Mathematics of the \\ Polish Academy of Sciences \\
    ul.\ \'Sniadeckich 8 \\
    00-656 Warszawa \\
    Poland \\[1em]
    {\footnotesize raum@math.su.se}
  }
  \hspace{3em}
  \parbox[t]{200pt}
  {
    Adam Skalski \\
    Institute of Mathematics of the \\ Polish Academy of Sciences \\
    ul.\ \'Sniadeckich 8 \\
    00-656 Warszawa \\
    Poland \\
    {\footnotesize a.skalski@impan.pl}
  }
}

\end{document}